\theoremstyle{definition}
\newtheorem{thm}{Theorem}[section]
\newtheorem{lmm}[thm]{Lemma}
\newtheorem{prp}[thm]{Proposition}
\newtheorem{dfn}[thm]{Definition}
\theoremstyle{remark}
\newtheorem*{rem}{Remark}
\title{A survey of known results on  \\the $m$-step solvable anabelian geometry  for hyperbolic curves}
\author{\textsc{Naganori Yamaguchi}\footnote{RIMS, Kyoto University, Kyoto
606-8502, Japan.\newline e-mail: \texttt{naganori@kurims.kyoto-u.ac.jp}}}
\date{}
\begin{document}

\maketitle

\tableofcontents      
\begin{abstract}      

In this survey, we introduce the three theorems about  the  $m$-step solvable Grothendieck conjecture in anabelian geometry of hyperbolic curves  by H. Nakamura, S. Mochizuki, and the author. We also give sketches of the proofs of these theorems. 
\end{abstract}

\addcontentsline{toc}{section}{Introduction}
\section*{Introduction}\ \par

From now on, we fix the following notation. Let $m$  be an  integer greater than or equal to  $1$,  $k$ a field of characteristic $p\geq 0$,   $X$ (resp. $X_{1}$, $X_{2}$) a proper, smooth curve over $k$ (in other words, $X$  (resp. $X_{1}$, $X_{2}$)  is  geometrically connected, proper, smooth of  dimension one over $k$), and  $E$ (resp. $E_{1}$, $E_{2}$) a closed subscheme of $X$ (resp. $X_{1}$, $X_{2}$) which is finite, \'{e}tale  over $k$. We set $U:=X-E$ (resp. $U_{1}:=X_{1}-E_{1}$, $U_{2}:=X_{2}-E_{2}$) and call $U$ (resp. $U_{1}$, $U_{2}$) a smooth curve over $k$. We write  $g$ (resp. $g_{1}$, $g_{2}$)  for a geometric genus of $X_{\overline{k}}$ (resp. $X_{1,\overline{k}}$,  $X_{2,\overline{k}}$), and set  $r:=|E(\overline{k})|$ (resp.  $r_{1}:=|E_{1}(\overline{k})|$,  $r_{2}:=|E_{2}(\overline{k})|$). We say that the smooth curve $U$ is  $hyperbolic$ if  $2-2g-r<0$.\par
Let $K(U_{k^{\text{sep}}})$ be the function field of $U_{k^{\text{sep}}}$, $\Omega$  an algebraically closed field  containing $K(U_{k^{\text{sep}}})$,   $\overline{\eta}:\text{Spec}(\Omega)\rightarrow U_{k^{\text{sep}}}$ the corresponding  geometric point,    $\Sigma$ a set of primes that contains at least one  prime different from $p$, and $\Sigma^{\dag}:=\Sigma-\{p\}$. We set
\begin{equation*}
\Pi_{U}:=\pi_1^{\text{tame}}(U,\overline{\eta}),\ \ \ \text{and}\  \ \ \overline{\Pi}_{U}:=\pi_1^{\text{tame}}(U_{k^{\text{sep}}},\overline{\eta}).
\end{equation*}
 For a profinite  group $G$, we write $G^{\Sigma}$ for the maximal pro-$\Sigma$ quotient of $G$, and    $\overline{[G,G]} $ for the closed subgroup of $G$ which is (topologically) generated by the commutator subgroup of $G$. We set $G^{[0]}:=G$,  $G^{[m]}:=\overline{[G^{[m-1]},G^{[m-1]}]}$,  $G^{0}:=G/G^{[0]}$, and  $G^{m}:=G/G^{[m]}$.   We set    $G^{m,\Sigma}:=(G^{\Sigma})^{m}$ for simplicity. We define 
\[
\Pi^{(m)}_{U}:= \Pi_{U}/\overline{\Pi}_{U}^{[m]},\ \ \ \ \Pi_{U}^{(m,\Sigma)}:=\Pi_{U}/\text{Ker} (\overline{\Pi}_{U}\twoheadrightarrow\overline{\Pi}_{U}^{m,\Sigma}),
\]
$\Pi^{(0)}_{U}:= \Pi_{U}/\overline{\Pi}_{U}^{[0]}(\xrightarrow{\sim}G_{k})$, and $\Pi_{U}^{(0,\Sigma)}:=\Pi_{U}/\text{Ker} (\overline{\Pi}_{U}\twoheadrightarrow\overline{\Pi}_{U}^{0,\Sigma})(\xrightarrow{\sim}G_{k})$. When $\Sigma=\{\ell\}$,  we write $\overline{\Pi}_{U}^{\text{pro-}\ell}$,  $\overline{\Pi}_{U}^{m, \text{pro-}\ell}$, $\Pi^{(\text{pro-}\ell)}_{U}$, $\Pi_{U}^{(m,\text{pro-}\ell)}$   instead of $\overline{\Pi}_{U}^{\Sigma}$, $\overline{\Pi}_{U}^{m, \Sigma}$, $\Pi^{(\Sigma)}_{U}$, $\Pi_{U}^{(m,\Sigma)}$,  respectively. When $\Sigma$ is the set of  all primes different from $p$, we write  $\overline{\Pi}_{U}^{p'}$, $\overline{\Pi}_{U}^{m, p'}$, $\Pi^{(p')}_{U}$, $\Pi_{U}^{(m,p')}$ instead of $\overline{\Pi}_{U}^{\Sigma}$,  $\overline{\Pi}_{U}^{m, \Sigma}$, $\Pi^{(\Sigma)}_{U}$,  $\Pi_{U}^{(m,\Sigma)}$, respectively.   For each quotient $\Pi_{U}\twoheadrightarrow Q$ which satisfies $\overline{\Pi}_{U}\subset \text{Ker}(\Pi_{U}\twoheadrightarrow Q)$, we  write $\text{pr}:Q\twoheadrightarrow G_{k}$ for the natural projection.  \par
We write ``\textbf{FF}'',  ``\textbf{NF}'', ``\textbf{FGF}'', ``$\textbf{FGF}_{\infty}$''  for   ``finite field'',  ``number field'' ,   ``field finitely generated over the prime field'',  ``infinite field finitely generated over the prime field'', respectively.  Moreover, we write  ``\textbf{S}$\ell$\textbf{F}''  for  ``sub $\ell$-adic field'' (i.e., a subfield of a finitely generated field extension of $\mathbb{Q}_{\ell}$).\par
In anabelian geometry, we have a fundamental conjecture called  the (weak) Grothen-dieck conjecture, which  predicts:  if a $G_k$-isomorphism $\Pi_{U_{1}}\cong\Pi_{U_{2}}$ exists, a $k$-isomorphism $U_{1}\cong U_{2}$ exists (we only consider the case of $p=0$ for simplicity).  About the Grothendieck conjecture, we already have many results, e.g., \cite{Na1990-411}, \cite{Ta1997}, \cite{Mo1999}, \cite{St2002}. We can consider  a variant of the Grothendieck conjecture by  replacing $\Pi_{U_{1}}$, $\Pi_{U_{2}}$  with  $\Pi^{(m)}_{U_{1}}$, $\Pi^{(m)}_{U_{2}}$, respectively. Then we are led to the following conjecture when $p=0$. 
\[
\Pi^{(m)}_{U_{1}}\underset{G_k}{\cong} \Pi^{(m)}_{U_{2}}\iff
U_{1}\underset{k}{\cong}U_{2}
\]

\noindent  If $p>0$, we need to modify  the above conjecture because the morphism $\Pi_{U}\rightarrow \Pi_{U(1)}$ induced by the (relative) Frobenius morphism $U\rightarrow U(1)$ is an isomorphism over $G_{k}$ (\cite{Ya2020} Remark 2.3.2).  Here, we consider the following conjecture when $p>0$.
\[
\Pi^{(m)}_{U_{1}}\underset{G_k}{\cong} \Pi^{(m)}_{U_{2}}\iff
U_{1}(n_{1})\underset{k}{\cong}U_{2}(n_{2})\text{ for some }n_{1},n_{2}\in \mathbb{Z}_{\geq 0} 
\]
We call these two conjectures  the (weak) $m$-step solvable Grothendieck conjectures. In this survey, we explain the following three results of the $m$-step solvable Grothendieck conjecture.

\begin{thm}[\cite{Na1990-405} Theorem A, see Theorem \ref{thmnakamura}]\label{thm1}
Assume that  $m\geq 2$, and $k$ is an \textbf{NF} that satisfies one of the following (a)-(c).
\begin{enumerate}[(a)]
\item $k$ is the rational number field $\mathbb{Q}$.
\item $k$ is a quadratic field $\neq  \mathbb{Q}(\sqrt{2})$.
\item There exists a prime ideal $\mathfrak{p}$ of $O_{k}$ unramified in $k/\mathbb{Q}$ such that  $|O_k/\mathfrak{p}|=2$.
\end{enumerate}
Then  the  $m$-step solvable  Grothendieck conjecture   for  $4$-punctured  projective lines  over $k$  holds.
\end{thm}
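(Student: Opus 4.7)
The plan is to reconstruct $U_i$ from the Galois-equivariant $m$-step solvable group $\Pi^{(m)}_{U_i}$ in two conceptual steps: first recover the cusp scheme $E_i \subset \mathbb{P}^1_k$ as a $k$-scheme, and then recover the moduli of the $4$-punctured line, which (once three cusps are normalised to $0, 1, \infty$) amounts to the cross-ratio $\lambda_i \in k - \{0, 1\}$ of the fourth cusp modulo the natural $S_3$-action.

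For the first step, I would use that a cuspidal inertia subgroup in $\Pi_U$ is pro-cyclic and acted on by $G_k$ via the cyclotomic character, a property that survives in the $m$-step solvable quotient for $m \geq 1$. A group-theoretic characterisation of these inertia subgroups (via their cyclotomic Galois action together with their relationship to the surjection $\Pi^{(m)}_U \twoheadrightarrow G_k$) recovers, up to conjugation, the four cuspidal inertia classes and their $G_k$-set structure. This reconstructs $E_i$ as an \'{e}tale $k$-scheme, yielding $E_1 \cong E_2$ over $k$. After possibly passing to a finite extension $k'/k$ that trivialises this scheme, we may write $U_{i, k'} = \mathbb{P}^1_{k'} - \{0, 1, \infty, \lambda_i\}$, reducing the problem to showing that $\lambda_1$ and $\lambda_2$ lie in the same $S_3$-orbit (under the cross-ratio permutations) in $k' - \{0, 1\}$ in a way compatible with descent to $k$.

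For the second step the hypothesis $m \geq 2$ becomes crucial: I would pass to the metabelian quotient and analyse the $G_k$-module $\overline{\Pi}_U^{[1]}/\overline{\Pi}_U^{[2]}$. Choosing topological generators of $\overline{\Pi}_U^{\mathrm{ab}}$ corresponding to loops around the four cusps, the commutator quotient admits an explicit Magnus-style description, and the associated extension class produces, via Kummer theory, $G_k$-characters whose classes in $H^1(G_k, \hat{\mathbb{Z}}(1))$ coincide with the Kummer classes of the ``cuspidal distances'' $\lambda_i$ and $1 - \lambda_i$; this is the Ihara--Anderson--Soul\'{e} content of the metabelian Galois action. Consequently the $G_k$-isomorphism $\Pi^{(m)}_{U_1} \cong \Pi^{(m)}_{U_2}$ forces these Kummer classes to agree.

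The main obstacle will be the final step: upgrading coincidence of Kummer classes in $H^1(G_k, \hat{\mathbb{Z}}(1))$ to equality of $\lambda_1$ and $\lambda_2$ in $k$ modulo the $S_3$-action. Two elements of $k^\times$ can share a Kummer class without being equal, so an additional arithmetic input is required, and this is the role of conditions (a)--(c). Each of these conditions guarantees enough rigidity of $k$ (for instance via the existence of a prime $\mathfrak{p}$ with $|O_k/\mathfrak{p}| = 2$ in case (c), or the explicit arithmetic of $\mathbb{Q}$ or of the specified quadratic fields in cases (a), (b)) to force the Kummer map, restricted to the multiplicative subgroup of $k^\times$ generated by the cross-ratio coordinates, to be sufficiently injective. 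This arithmetic rigidification step is the technical heart of Nakamura's argument, and the reason the theorem is stated for precisely these number fields.
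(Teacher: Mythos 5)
Your proposal's first step already diverges from Nakamura's argument in a way that would lose the crucial $m=2$ case. You propose to group-theoretically reconstruct the cuspidal inertia subgroups of $\overline{\Pi}_U^{m}$ from $\Pi_U^{(m)}$. As the paper emphasizes at the beginning of subsection 3.2, this is precisely what Nakamura does \emph{not} do, and for good reason: the inertia-reconstruction machinery (Proposition \ref{1.4.5}) requires an isomorphism of $\Pi^{(m+2,\Sigma^{\dag})}$'s in order to see inertia in $\Pi^{(m,\Sigma^{\dag})}$, so starting from a $G_k$-isomorphism of $\Pi^{(2)}$'s one cannot get hold of inertia or decomposition groups at any positive level. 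The route you describe is essentially that of Theorem \ref{thmnyamaguchi} (Yamaguchi), which indeed handles arbitrary genus $0$ curves but only for $m\geq 3$. Nakamura instead sidesteps inertia reconstruction entirely by introducing the \emph{strong rigidity invariant} $\kappa_N(\Pi^{(2)}_{\mathbb{P}^1_k-\Lambda})$: one singles out (via the theory of weights) a family of open subgroups $H$ of $\Pi^{(2)}_U$ whose image in $\Pi^{(1)}_U$ is a Galois $G_{k(\mu_N)}$-model of $\overline{\Pi}_U^{1,N\text{-th}}$, looks at the Galois representation on the weight $-2$ part of $\overline{H}^{1,\text{pro-}\ell}$, and intersects the resulting fixed fields over all $\ell$ and all $H$. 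This yields $k(\Gamma(\Lambda)^{1/N})$ with $\Gamma(\Lambda)=\langle -1,\lambda,1-\lambda\rangle$, using only $\Pi^{(2)}$ and, in the $4$-punctured case, without ever characterizing inertia. (Also note that for $\Lambda=\{0,1,\infty,\lambda\}$ with $\lambda\in k$, the cusp scheme is already four $k$-rational points, so recovering $E$ as a $k$-scheme is not the content here.)

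Your second and third steps identify the right objects (the Kummer fields of $\lambda$ and $1-\lambda$, and the cross-ratio modulo the $S_3$-action $J(\lambda)$) but misplace the role of hypotheses (a)--(c). The passage from equality of all Kummer fields $k(\Gamma_1^{1/\ell^n})=k(\Gamma_2^{1/\ell^n})$ to equality of finitely generated multiplicative subgroups $\Gamma_1=\Gamma_2$ (Proposition \ref{step2prop}) works over \emph{any} field finitely generated over $\mathbb{Q}$, using only that the unit group of a suitable ring of integers is finitely generated; this is not where (a)--(c) enter. The genuine obstruction is the implication $\Gamma(\{0,1,\infty,\lambda_1\})=\Gamma(\{0,1,\infty,\lambda_2\}) \Rightarrow J(\lambda_1)=J(\lambda_2)$, which fails exactly when $J(\lambda_1)$ meets the exceptional set $\mathcal{E}=\{e\in\overline{\mathbb{Z}}^\times : (1-e)\mid 2\}$ (Proposition \ref{step3prop}). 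Conditions (a)--(c) are there to dispose of the exceptional elements of $\mathcal{E}$ in the specific number field $k$: over $\mathbb{Q}$ the only one is $-1$; over a quadratic field one lists them explicitly and checks the relevant $\Gamma$'s differ; under (c) the prime $\mathfrak{p}$ of residue degree $1$ above $2$ supplies a valuation verifying the needed valuation-theoretic conditions. So it is not ``injectivity of the Kummer map'' but control of the units $e$ with $(1-e)\mid 2$ that the hypotheses buy. In short, your architecture is closer to the $m\geq 3$, genus $0$ theorem than to Nakamura's $m\geq 2$ argument, and the inertia-reconstruction step and the localization of where (a)--(c) are used would both need to be reworked.
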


\begin{thm}[\cite{Mo1999} Theorem A$^{'}$, see Theorem \ref{thmmochizuki}]\label{thm2}
Fix a prime $\ell$. Assume that $m\geq 5$, and $k$ is an \textbf{S}$\ell$\textbf{F}. Then the (pro-$\ell$)  $m$-step solvable  Grothendieck conjecture   for   hyperbolic curves over $k$  holds.
\end{thm}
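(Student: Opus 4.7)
The plan is to follow Mochizuki's original pro-$\ell$ strategy over an \textbf{S}$\ell$\textbf{F}, carefully tracking which data are actually extracted from $\Pi_U^{\text{pro-}\ell}$ at each step, and verifying that everything Mochizuki needs already lives in the $m$-step solvable quotient $\Pi_U^{(m,\text{pro-}\ell)}$ once $m\geq 5$. Thus I would fix a $G_k$-isomorphism $\alpha:\Pi_{U_1}^{(m,\text{pro-}\ell)}\xrightarrow{\sim}\Pi_{U_2}^{(m,\text{pro-}\ell)}$ and aim to produce a $k$-isomorphism $U_1\cong U_2$ inducing $\alpha$.

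The first block of work is purely group-theoretic reconstruction: from $\Pi_U^{(m,\text{pro-}\ell)}$ one must recover the invariants $(g,r)$ and the family of cuspidal inertia subgroups inside $\overline{\Pi}_U^{m,\text{pro-}\ell}$. For this I would use weight arguments on the $G_k$-action on graded pieces of the pro-$\ell$ lower central series of $\overline{\Pi}_U^{\text{pro-}\ell}$, in the spirit of Nakamura and Tamagawa: the abelianization together with its cuspidal sub-extension already detects $(g,r)$ through cyclotomic Tate twist weights, and the inertia subgroups are characterized as the pro-$\ell$ cyclic subgroups on which $G_k$ acts via the cyclotomic character. A few steps of solvability suffice for this block, and the decomposition subgroups at cusps are identified immediately afterwards via normalizer/commensurator conditions, providing the cuspidalization data needed to formulate a Hom-version of the conjecture.

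The heart of the argument, and the main obstacle, is the Hodge--Tate comparison step that converts the group-theoretic isomorphism $\alpha$ into a genuine morphism of schemes. Mochizuki's framework proves a Hom-version of the pro-$\ell$ Grothendieck conjecture over an \textbf{S}$\ell$\textbf{F} by exploiting that the Galois representations on $H^1_{\text{\'et}}(\overline{U},\mathbb{Q}_\ell)$ and on the analogous objects attached to relative configuration spaces are Hodge--Tate with prescribed weights $0$ and $1$, which rigidifies $G_k$-equivariant maps. For the $m$-step solvable refinement one must verify that Mochizuki's fiber-product/configuration-space argument, which in the original proof uses several nested layers of pro-$\ell$ fundamental groups, still runs when each layer is replaced by its $m$-step solvable quotient; the bookkeeping that $m=5$ is enough — roughly, a handful of steps to see cusps on the base, a few more for the fiber in the configuration-space fibration, and a further commutator to read off the Hodge--Tate twist from the weight filtration — is the key technical ingredient. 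Once granted, Mochizuki's Hom version supplies a dominant $k$-morphism $U_1\to U_2$ realizing $\alpha$, and symmetry in $U_1$, $U_2$ promotes it to the desired $k$-isomorphism.
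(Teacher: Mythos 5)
Your proposal captures the right high-level shape — pass to the Hom-version, reconstruct $\ell$-adic cohomology group-theoretically, invoke Hodge--Tate comparison, and produce a dominant $k$-morphism — but the central accounting for why $m\geq 5$ is enough, which you call ``the key technical ingredient,'' is exactly where you wave your hands, and the explanation you offer does not match how the paper (following Mochizuki) actually organizes the bound.

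The paper's sketch first works under three simplifying hypotheses: $k$ a finite extension of $\mathbb{Q}_\ell$, $U_2$ a non-hyperelliptic proper hyperbolic curve, and $m=3$. Under these hypotheses, the Hodge--Tate decomposition applied to $H^1(U_i,\mathbb{Q}_\ell)$, which is recovered from $\Pi_{U_i}^{(1,\text{pro-}\ell)}$, produces $H^0(U_i,\omega_{U_i/k})$ and hence a map $\theta$ between the targets of the canonical embeddings $U_i\hookrightarrow\mathbb{P}(H^0(U_i,\omega_{U_i/k}))$; the whole point is then to show that $\theta$ preserves the defining relations of $U_2$ in its canonical embedding. This is checked locally: one takes a model of $U_1$ over $O_k$, passes to the fraction field $L$ of the completion of a local ring at a codimension-one point, obtains a section $\alpha^L\colon G_L\to\Pi_{U_2}^{(1,\text{pro-}\ell)}$, and the entire technical burden is showing $\alpha^L$ is \emph{geometric}. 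That argument requires constructing a line bundle of degree prime to $\ell$, and it is this step — not a configuration-space fibration — that forces one to use $\Pi_{U_2}^{(3,\text{pro-}\ell)}$, i.e.\ $m=3$. To remove the non-hyperelliptic/properness hypothesis, one passes to a cover $V_2\to U_2$ corresponding to an open subgroup of $\Pi_{U_2}^{(2,\text{pro-}\ell)}$, chosen so that $V_2^{\text{cpt}}$ is non-hyperelliptic of genus $\geq 2$ with large specified ramification over all the cusps; the two extra derived steps consumed by this cover are exactly why the bound rises from $3$ to $5$. Your proposal instead attributes the $m=5$ count to ``seeing cusps on the base,'' a ``configuration-space fibration,'' and reading off the Hodge--Tate twist, none of which corresponds to the actual $3+2$ decomposition — and you omit the geometric-sections step (showing $\alpha^L$ is geometric via a prime-to-$\ell$ line bundle), which is the real locus of difficulty. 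As a secondary point, the paper's argument works with the canonical embedding of the proper curve and the relation-preservation criterion; the cuspidal inertia/decomposition reconstruction you foreground in your first block is not how the sketch proceeds here.
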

\begin{thm}[\cite{Ya2020} Theorem 2.4.1, see Theorem \ref{thmnyamaguchi}]\label{thm3}
Assume that $m\geq 3$, and   $k$ is an $\textbf{FGF}$.  If, moreover, $p> 0$, we assume that  the curve  $X_{1,\overline{k}}-S'$ does not   descend to a curve over  $\overline{\mathbb{F}}_{p}$ for each $S'\subset E_{1,\overline{k}}$ with $|S'|=4$.  Then the  (pro-prime-to-$p$)  $m$-step solvable  Grothendieck conjecture   for  genus $0$ hyperbolic curves over $k$ under this assumption holds.
\end{thm}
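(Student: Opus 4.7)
The plan is to combine a group-theoretic reconstruction of cuspidal data with Nakamura's Theorem \ref{thm1} via a specialization argument. Since $U_i$ is a genus $0$ hyperbolic curve, $U_i = \mathbb{P}^1_k - E_i$ with $r_i \geq 3$; the case $r_i = 3$ is trivial (only $\mathbb{P}^1_k - \{0,1,\infty\}$ up to $k$-isomorphism, modulo Frobenius twist when $p > 0$), so the main content lies in $r_i \geq 4$.

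First, I would recover group-theoretically, from the given $G_k$-isomorphism $\Pi^{(m,p')}_{U_1}\cong \Pi^{(m,p')}_{U_2}$, the set $E_{i,\overline{k}}$ together with its $G_k$-action. For genus $0$, the geometric group $\overline{\Pi}^{p'}_{U_i}$ is a free pro-prime-to-$p$ group of rank $r_i-1$, and its inertia subgroups can be characterized as the maximal pro-cyclic subgroups whose image in $\overline{\Pi}^{p',\mathrm{ab}}_{U_i}$ is Tate-pure of weight $2$ in the $G_k$-module structure. Since $m \geq 3$, the metabelian quotient $\Pi^{(2,p')}_{U_i}$ is accessible as a subquotient, which supplies enough Galois-weight information to single out the cuspidal classes and thereby reconstruct $E_{i,\overline{k}}$ as a $G_k$-set. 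The $k$-isomorphism class of $(\mathbb{P}^1_k, E_i)$ is then encoded in the cross-ratios of the $G_k$-orbits of $4$-element subsets, so the whole problem reduces to the $r_i = 4$ case by observing that for each $G_k$-stable $4$-subset $E_i'\subset E_{i,\overline{k}}$ one obtains a canonical $G_k$-equivariant surjection $\Pi^{(m,p')}_{U_i}\twoheadrightarrow \Pi^{(m,p')}_{\mathbb{P}^1_k-E_i'}$.

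Next, for the $4$-punctured case over an \textbf{FGF} $k$, I would descend to a number field by choosing a model of $U_i$ over a finitely generated $\mathbb{Z}$-algebra $A$ with $\mathrm{Frac}(A) = k$, spreading out the given isomorphism of solvable fundamental groups over $\mathrm{Spec}(A)$, and specializing at a closed point whose residue field $k_0$ is a number field satisfying one of Nakamura's arithmetic hypotheses (a)--(c). Such closed points exist in abundance by Chebotarev-density/Hilbert-irreducibility type arguments in the finitely generated setting, and the specialization morphism $\pi_1^{\mathrm{tame}}$ is compatible with $m$-step solvable quotients. Applying Theorem \ref{thm1} to the specialized curves gives a $k_0$-isomorphism, which by the rigidity of $\mathcal{M}_{0,4}$ (the cross-ratio is an algebraic invariant of the pointed configuration) propagates back to a $k$-isomorphism $U_1 \cong U_2$ in characteristic $0$. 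In positive characteristic, the hypothesis that no $4$-point sub-configuration $X_{1,\overline{k}} - S'$ descends to $\overline{\mathbb{F}}_p$ is exactly what excludes the Frobenius-twist degeneracy (two such curves would be prime-to-$p$ fundamental-group indistinguishable up to arbitrary Frobenius pullback), thus allowing the conclusion to be sharpened from equality up to Frobenius twist $U_1(n_1)\cong U_2(n_2)$ back to a meaningful geometric equality on each $4$-punctured subquotient.

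The main obstacle is the specialization step: one must control how the $m$-step solvable Galois structure behaves under reduction modulo a prime of $A$, and, crucially, choose the specialization point so that (i) the residue field is a number field fulfilling one of Nakamura's conditions (a)--(c), (ii) both curves have good reduction and the specialization preserves the isomorphism class of the solvable fundamental group, and (iii) the descended configuration is non-degenerate (e.g.\ not reducing to three points). Arranging all three conditions simultaneously, and then verifying that a $k_0$-isomorphism on reductions forces a $k$-isomorphism on the generic fibres, is the technical heart of the proof; the cusp-recovery in the first step and the Frobenius-twist handling in characteristic $p$ are conceptually clean once the weight-filtration picture is in place.
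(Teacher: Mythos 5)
Your proposal diverges from the paper's proof at its central step, and the divergence is where the argument breaks down. The paper (following \cite{Ya2020}) does \emph{not} specialize to number fields and invoke Theorem~\ref{thm1}; instead it carries out Nakamura's whole weight-filtration/Kummer-theory machinery \emph{directly} over the \textbf{FGF} $k$: it reconstructs decomposition groups at cusps from $\Pi^{(m,p')}$ (Proposition~\ref{1.4.5}, with a two-step drop, hence $m\geq 3$), defines a \emph{weak} rigidity invariant $\kappa_{\ell^n}(\varepsilon,\delta)$ built from those decomposition groups, proves $\kappa_{\ell^n}(\varepsilon,\delta)=k(\mu_{\ell^n},\lambda(\varepsilon,\delta)^{1/\ell^n})$ (Proposition~\ref{2.1.3}), converts this to equalities of subgroups $\langle\lambda(\varepsilon,\delta)\rangle$ via a Kummer-theoretic lemma valid over any \textbf{FGF} (Lemma~\ref{nakamuralmm2}), pins down the cross-ratios up to a controllable ambiguity (Lemma~\ref{2.2.1}), and then handles the general genus $0$ case by Galois descent, with separate bookkeeping of Frobenius twists when $p>0$. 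Nowhere is there a reduction to Nakamura's arithmetic hypotheses (a)--(c).

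Your specialization step is not repairable as stated, for several reasons. First, specialization does not act on the \emph{arithmetic} fundamental group: $G_k$ and $G_{k_0}$ are not related by a natural map, so a $G_k$-isomorphism $\Pi^{(m,p')}_{U_1}\cong\Pi^{(m,p')}_{U_2}$ does not ``reduce'' to a $G_{k_0}$-isomorphism; only the geometric part $\overline{\Pi}^{m,p'}$ has good specialization behavior, and the arithmetic data has to be recovered via decomposition-group considerations that you do not supply. Second, even if that step were made to work, Theorem~\ref{thm1} applies only to number fields satisfying (a), (b) or (c); if $k$ is already a number field outside that class (e.g.\ $\mathbb{Q}(\sqrt{2})$, or a cubic field), there is nothing to specialize to, so your plan is incomplete even in transcendence degree zero. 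Third, the reverse direction --- deducing a $k$-isomorphism at the generic fibre from a $k_0$-isomorphism at one specialization --- is not automatic; you gesture at ``rigidity of $\mathcal{M}_{0,4}$'', but matching at a single closed point does not determine the generic cross-ratio, and you would need to control a dense set of specializations and argue an identity of regular functions, none of which is spelled out.

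There is also a gap in your first step. The claimed reduction to $r_i=4$ via a surjection $\Pi^{(m,p')}_{U_i}\twoheadrightarrow\Pi^{(m,p')}_{\mathbb{P}^1_k-E'_i}$ presupposes (i) the existence of $G_k$-stable $4$-element subsets of $E_{i,\overline{k}}$, which need not hold (the Galois action may be transitive on $5$ or more points), and (ii) that the given $\Phi_m$ descends along these fill-in surjections, which would require $\Phi_m$ to preserve the (normally generated) inertia kernels at all levels of the $m$-step solvable tower, not merely at level $1$; Proposition~\ref{1.4.5} only yields inertia/decomposition data at level $m-2$. The paper sidesteps this by never quotienting to a smaller curve: the weak rigidity invariant is attached directly to pairs $(\varepsilon,\delta)$ of cusps inside the full $\Pi^{(1,p')}_{\mathbb{P}^1_k-\Lambda}$ and encodes each cross-ratio without passing to $4$-punctured sub-curves.
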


In section $1$, we outline  the paper \cite{Na1990-405}, and we present the sketch of the  proof of Theorem \ref{thm1}. In section $2$,  we introduce the proof of Theorem \ref{thm2}. In section $3$, we outline the paper \cite{Ya2020}, and  we present the sketch of the  proof of Theorem \ref{thm3}.

\section{When   $m\geq 2$,  $(g,r)=(0,4)$  and $k$ is an \textbf{NF} with certain conditions}
 Through this section, we assume that $k$ is an \textbf{NF}, and  write   $O_{k}$ for  the ring of integers of $k$.   In this section, we introduce the paper  \cite{Na1990-405}. In  \cite{Na1990-405}, Nakamura exploited the theory of weights and  proved the following theorem. 

\begin{thm}[\cite{Na1990-405} Theorem A]\label{thmnakamura}
Let $\lambda_{i}\in k-\{0,1\}$ and  set $\Lambda_{i}=\{0,1,\infty,\lambda_{i}\}$, for each $i=1,2$. Assume that  $m\geq 2$, and $k$  satisfies one of the following (a)-(c).
\begin{enumerate}[(a)]
\item $k$ is the rational number field $\mathbb{Q}$.
\item $k$ is a quadratic field $\neq  \mathbb{Q}(\sqrt{2})$.
\item There exists a prime ideal $\mathfrak{p}$ of $O_{k}$ unramified in $k/\mathbb{Q}$ such that  $|O_k/\mathfrak{p}|=2$.
\end{enumerate}
Then the following holds.
\begin{equation*}
\Pi_{\mathbb{P}^{1}_{k}-\Lambda_{1}}^{(m)}\xrightarrow[G_{k}]{\sim} \Pi_{\mathbb{P}^{1}_{k}-\Lambda_{2}}^{(m)}\iff  \mathbb{P}^{1}_{k}-\Lambda_{1}\xrightarrow[k]{\sim}\mathbb{P}^{1}_{k}-\Lambda_{2}
\end{equation*}
In other words,   the  $m$-step solvable  Grothendieck conjecture   for  $4$-punctured  projective lines over $k$  holds.
\end{thm}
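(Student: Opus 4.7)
The direction ``$\Leftarrow$'' is immediate: a $k$-isomorphism of curves induces a $G_k$-compatible isomorphism of the full fundamental groups, and hence of their $m$-step solvable quotients. For ``$\Rightarrow$'', the plan is to reconstruct the $k$-isomorphism class of $\mathbb{P}^{1}_{k}-\Lambda_{i}$ directly from $\Pi^{(m)}_{\mathbb{P}^{1}_{k}-\Lambda_{i}}$ viewed as a $G_k$-group. Since two $4$-punctured projective lines over $k$ are $k$-isomorphic if and only if the associated cross-ratios lie in a common orbit of the anharmonic action of $S_3$ on $k-\{0,1\}$ (generated by $\lambda\mapsto 1-\lambda$ and $\lambda\mapsto 1/\lambda$), the task reduces to reading off $\lambda$, up to this $S_3$-ambiguity, from the abstract $G_k$-group.

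First I would group-theoretically identify the cuspidal inertia subgroups sitting inside the geometric part $\overline{\Pi}_U/\overline{\Pi}_U^{[m]}$ of $\Pi^{(m)}_U$. The geometric abelianization $\overline{\Pi}_U/\overline{\Pi}_U^{[1]}$ has canonical topological generators coming from loops around the four punctures, subject to the single relation that their sum vanishes. Fixing any prime $\ell$, these inertia subgroups can be singled out from the abstract $G_k$-group by a weight argument: in the pro-$\ell$ abelianization they are characterized as the pro-cyclic subgroups of weight $2$ with respect to the Frobenius-weight filtration obtained from any sufficiently general Frobenius lift in $G_k$. Once identified, they furnish a canonical $G_k$-equivariant bijection between $\Lambda_1$ and $\Lambda_2$.

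Next I would exploit the metabelian layer $\overline{\Pi}_U^{[1]}/\overline{\Pi}_U^{[2]}$, which becomes accessible precisely because $m\geq 2$. This layer is a $G_k$-module over the completed group ring of the geometric abelianization, and the tangential base points at the four cusps yield distinguished splittings of the short exact sequence $1\to \overline{\Pi}_U/\overline{\Pi}_U^{[m]}\to \Pi^{(m)}_U\to G_k\to 1$. Comparing these splittings should produce Kummer-type $1$-cocycles whose classes, read in the basis indexed by pairs of cusps, encode the cross-ratios of the four punctures as elements of $k^{\times}\otimes\hat{\mathbb{Z}}$. This is the heart of the argument and the main obstacle: one must show that a purely group-theoretic recipe yields exactly these arithmetic cross-ratios, and that the recipe is invariant under the assumed $G_k$-isomorphism. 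It is at this stage that a systematic use of weights on pro-$\ell$ fundamental groups -- in the spirit of Deligne's weight-monodromy philosophy -- becomes decisive.

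Finally I would descend the Kummer class from $k^{\times}\otimes\hat{\mathbb{Z}}$ to $k^{\times}$ itself, so as to pin $\lambda$ down in $k-\{0,1\}$ modulo the $S_3$-action. This is where hypotheses (a)--(c) come into play: each of them guarantees the existence of a prime $\mathfrak{p}\subset O_k$, unramified over $\mathbb{Q}$, whose residue field is $\mathbb{F}_2$. Since $\mathbb{F}_2^{\times}$ is trivial, a reduction-modulo-$\mathfrak{p}$ or Chebotarev argument leaves no room for residual unit ambiguities, and forces the reconstructed Kummer class to coincide with the honest cross-ratio up to the $S_3$-orbit, completing the reconstruction of the $k$-isomorphism class.
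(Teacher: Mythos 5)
Your proposal diverges from the paper's proof in two respects, and both divergences hide genuine problems.

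First, your Step 1 rests on reconstructing the cuspidal inertia subgroups from the $G_k$-group structure, and your Step 2 on comparing tangential splittings to extract cross-ratios. But for a $4$-punctured $\mathbb{P}^1$ over $k$ with all four punctures $k$-rational, the weight $-2$ part of the pro-$\ell$ abelianization is isomorphic to $\mathbb{Z}_\ell(1)^3$ as a $G_k$-module, so the three inertia lines are just three lines in a free $\mathbb{Z}_\ell(1)$-module with no distinguished $G_k$-structure separating them; a $G_k$-automorphism is free to mix them. Reconstructing inertia groups does become possible, but via the maximal cyclic subgroups of cyclotomic type and this is precisely what the paper attributes to \cite{Na1990-411} and \cite{Ya2020} (see Proposition \ref{1.4.5}, which requires $m\geq 3$, and the discussion at the start of subsection 3.2: ``In Theorem \ref{thmnakamura}, we do not reconstruct group-theoretically inertia groups''). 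Nakamura's route for $m=2$ is deliberately more indirect: he defines the \emph{strong rigidity invariant} $\kappa_N$ using Galois $G_{k(\mu_N)}$-models of the $N$-th-power subgroup, and Proposition \ref{rigidityprp} shows $\kappa_N(\Pi^{(2)}_{\mathbb{P}^1_k-\Lambda})=k(\Gamma(\Lambda)^{1/N})$, where $\Gamma(\Lambda)$ is the group generated by the pairwise differences of the punctures. The invariant pins down a \emph{field extension}, not inertia groups, and this is exactly what lets the argument run already at $m=2$.

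Second, you misidentify what the hypotheses (a)--(c) are for. The descent from $k^\times\otimes\hat{\mathbb{Z}}$ to $k^\times$ — i.e.\ recovering $\Gamma(\Lambda)$ from the fields $k(\Gamma(\Lambda)^{1/\ell^n})$ — is Proposition \ref{step2prop}, and it holds for \emph{any} finitely generated extension of $\mathbb{Q}$; no hypothesis on $k$ is needed there. The hypotheses enter only in Step 3, after $\Gamma(\{0,1,\infty,\lambda_1\})=\Gamma(\{0,1,\infty,\lambda_2\})$ has been established, in order to conclude $J(\lambda_1)=J(\lambda_2)$. This implication (Proposition \ref{step3prop}) is automatic whenever $J(\lambda_1)$ avoids the exceptional set $\mathcal{E}=\{e\in\overline{\mathbb{Z}}^\times: (1-e)\mid 2\}$, and (a)--(c) are precisely assumptions under which the exceptional case can be controlled. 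Moreover your claim that each of (a)--(c) ``guarantees the existence of a prime $\mathfrak{p}$ ... whose residue field is $\mathbb{F}_2$'' is simply false for (b): for instance $2$ is inert in $\mathbb{Q}(\sqrt{5})$, yet $\mathbb{Q}(\sqrt{5})$ is allowed. The paper handles (b) by a finite case-by-case analysis of the quadratic fields that actually contain nontrivial elements of $\mathcal{E}$; only case (c) proceeds via a prime above $2$ of residue degree one.
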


We will see the proof of Theorem \ref{thmnakamura} in this section. We divide the proof into the following  three steps. 
\begin{itemize}
\item[Step 1:]  Let   $\Lambda$  be a  finite subset of  $k\cup \{\infty\}$ which satisfies  $\{0,1,\infty\}\subset \Lambda$ and  $|\Lambda|\geq 4$. We  define $\Gamma(\Lambda)$ as the subgroup of $k^{\times}$ generated by $\{\lambda-\lambda'\mid\lambda,\lambda'\in \Lambda-\{\infty\}, \lambda\neq \lambda'\}$. We will show that,  for all $N\geq 1$, the field  $k(\Gamma(\Lambda)^{\frac{1}{N}})$ is reconstructed group-theoretically from $\Pi^{(2)}_{\mathbb{P}^{1}_{k}-\Lambda}$   (see Proposition \ref{rigidityprp}), where  $\Gamma(\Lambda)^{\frac{1}{N}}$ stands for the set of all elements  of $\overline{k}^{\times}$, whose $N$-th power is contained in $\Gamma(\Lambda)$. This step corresponds to \cite{Na1990-405} section 2.
\item[Step 2:]  For each $i=1,2$, let   $\Lambda_{i}$  be a  finite subset of  $k\cup \{\infty\}$ which satisfies  $\{0,1,\infty\}\subset \Lambda_{i}$ and  $|\Lambda_{i}|\geq 4$.  In this step, we will show that, if $k(\Gamma(\Lambda_{1})^{\frac{1}{\ell^{n}}})=k(\Gamma(\Lambda_{2})^{\frac{1}{\ell^{n}}})$ holds  for all primes $\ell$ and  all integers $n\geq 0$, then $\Gamma(\Lambda_{1})=\Gamma(\Lambda_{2})$ holds (see Proposition \ref{step2prop}).  This step corresponds to \cite{Na1990-405} section 3.
\item[Step 3:] For each $\lambda \in k-\{0,1\}$, we define the set:

\[
J(\lambda)=\left\{ \lambda, \frac{1}{\lambda},1-\lambda, \frac{1}{1-\lambda},\frac{\lambda}{\lambda-1},\frac{\lambda-1}{\lambda}\right\}.
\]
Let $\lambda_{i}\in k-\{0,1\}$. The curves $\mathbb{P}^{1}_{k}-\{0,1,\infty, \lambda_{1}\}$ and $\mathbb{P}^{1}_{k}-\{0,1,\infty, \lambda_{2}\}$ are isomorphic over $k$ if and only if $J(\lambda_{1})=J(\lambda_{2})$. In this step, we find out a certain relation between the equalities  $J(\lambda_{1})=J(\lambda_{2}) $ and $\Gamma(\{0,1,\infty,\lambda_{1}\})=\Gamma(\{0,1,\infty,\lambda_{2}\})$. In particular, we will prove the  main theorem of this section.  This step corresponds to \cite{Na1990-405} section 4.
\end{itemize}
\ \\
\underline{\textbf{Step 1}}\par
 Let  $\ell$ be a prime. First, we introduce the weight filtration of $\overline{\Pi}^{1,\text{pro-}\ell}_{U}$. Let  $ \mathbb{Z}[E(\overline{k})]$ be the free $\mathbb{Z}$-module generated by $E(\overline{k})$ and regard it as a  $G_k$-module via the  $G_k$-action on   $E(\overline{k})$.  We have the  following isomorphism and exact sequence of  $G_k$-modules.
\begin{equation}\label{wfseq}
\begin{cases}
\overline{\Pi}_{U}^{1,\text{pro-}\ell}\xrightarrow{\sim} T_{\ell}(J_{X})^{}&\ (r=0)\\
0\rightarrow \mathbb{Z}_{\ell}(1)\rightarrow \mathbb{Z}[E(\overline{k})]\bigotimes_{\mathbb{Z}} \mathbb{Z}_{\ell}(1)\xrightarrow{} \overline{\Pi}_{U}^{1,\text{pro-}\ell}\rightarrow T_{\ell}(J_{X})^{}\rightarrow 0&\  (r\neq 0).\\
\end{cases}
\end{equation}
Here $T_{\ell}(J_{X})$ stands for the $\ell$-adic Tate module of the Jacobian variety $J_{X}$ of $X$. The $G_k$-actions on $\mathbb{Z}[E(k^{\text{sep}})]\underset{\mathbb{Z}}\otimes\mathbb{Z}_{\ell}(1)$ and $T_{\ell}(J_{X})$ have weights $-2$ and $-1$ (see  \cite{Na1990-405} section 2).    The same assertion is true if $k$ is  an \textbf{FGF} and $\ell\neq  p$, see \cite{Ya2020} subsection 1.3.\par 
For an open subgroup $H\subset \Pi^{(m)}_{U}$ containing $\overline{\Pi}^{[m-1]}_{U}/\overline{\Pi}^{[m]}_{U}$, let $W_{-2}(\overline{H}^{1,\text{pro-}\ell})$ be  the unique  maximal $\text{pr}(H)$ submodule  of $\overline{H}^{1,\text{pro-}\ell}$  of weight $-2$  (see \cite{Na1990-405} (2.1)Proposition). We have the  Galois representation $\phi_{H}^{(\ell)}:\text{pr}(H)\rightarrow \text{Aut}(W_{-2}(\overline{H}^{1,\text{pro-}\ell}))$ for all primes $\ell$.  Using these Galois representations, we define the strong rigidity invariant. (In \cite{Na1990-405}, the strong rigidity invariant is defined  for  an arbitrary $G_{k}$-augmented profinite group (see \cite{Na1990-405} section 2), but we only consider the  strong rigidity invariant for $\Pi^{(2)}_{U}$.)

\begin{dfn}[cf. \cite{Na1990-405} (2.2)Definition]
\begin{enumerate}[(1)]
\item Let $\overline{H}$ be an open subgroup of $\overline{\Pi}^{m}_{U}$, and $G$ an open subgroup of $G_{k}$. We say that an open subgroup $H$ of $\Pi^{(m)}_{U}$ is  a Galois $G$-model of $\overline{H}$ if  $H\cap \overline{\Pi}^{m}_{U}=\overline{H}$, $\text{pr}(H)=G$, and $\text{pr}^{-1}(G)\rhd H$.
\item Let $N\in\mathbb{Z}_{\geq 1}$, and $\overline{\Pi}^{1,N\text{-th}}_{U}$ the set of the $N$-th powers of all elements of  $\overline{\Pi}_{U}^{1}$.   We define the $strong$ $rigidity$ $invariant$ $\kappa_{N}=\kappa_{N}(\Pi^{(2)}_{U})$ to be the subfield of $\overline{k}$ consisting of  the elements fixed by all the automorphisms of $\overline{k}$  belonging to 
\begin{equation*}
\bigcup_{\ell:\text{prime}}\bigcup_{H\in \mathscr{H}_{N}}\text{Ker}(\phi_{H}^{(\ell)})
\end{equation*}
where $\mathscr{H}_{N}$  is the set of all open subgroups $H$  of $\Pi^{(2)}_{U}$ containing $\overline{\Pi}^{[1]}_{U}/\overline{\Pi}^{[2]}_{U}$ such that  the image of $H$ in $\Pi^{(1)}_{U}$ is a Galois $G_{k(\mu_{N})}$-model of $\overline{\Pi}^{1,N\text{-th}}_{U}$
\end{enumerate}
\end{dfn}

\begin{prp}[cf. \cite{Na1990-405} (2.9)Theorem, \cite{Na1990-405} (2.10)Corollary]\label{rigidityprp}
 Let   $\Lambda$  be a  finite subset of  $k\cup \{\infty\}$ which satisfies  $\{0,1,\infty\}\subset \Lambda$ and  $|\Lambda|\geq 4$. For each $N\in\mathbb{Z}_{\geq 1}$, the following holds.
\begin{equation*}
\kappa_N(\Pi^{(2)}_{\mathbb{P}^{1}_{k}-\Lambda})\ =\ k(\Gamma(\Lambda)^{\frac{1}{N}})
\end{equation*}
\end{prp}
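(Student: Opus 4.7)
The plan is to compute $\kappa_N = \kappa_N(\Pi^{(2)}_{\mathbb{P}^1_k - \Lambda})$ by describing, for each $H \in \mathscr{H}_N$, the étale covering $V \to U_{k^{\text{sep}}}$ encoded by $\overline{H}$ and then identifying the Galois representation $\phi_H^{(\ell)}$ on $W_{-2}(\overline{H}^{1,\text{pro-}\ell})$ in terms of the cusp combinatorics of $V$.

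Since $U = \mathbb{P}^1_k - \Lambda$ has genus $g = 0$, the Tate module $T_\ell(J_X)$ vanishes and the exact sequence (\ref{wfseq}) reduces to
\begin{equation*}
0 \to \mathbb{Z}_\ell(1) \to \mathbb{Z}[\Lambda] \otimes_{\mathbb{Z}} \mathbb{Z}_\ell(1) \to \overline{\Pi}^{1,\text{pro-}\ell}_U \to 0,
\end{equation*}
so $\overline{\Pi}^{1,\text{pro-}\ell}_U$ is pure of weight $-2$, with basis (modulo the sum relation) indexed by the punctures. For $H \in \mathscr{H}_N$, the geometric part $\overline{H}$ is the preimage in $\overline{\Pi}^2_U$ of $\overline{\Pi}^{1,N\text{-th}}_U$, and a short calculation using $\overline{\Pi}^{[2]}_U \subset [\overline{\Pi}_V,\overline{\Pi}_V]$ (where $\overline{\Pi}_V \subset \overline{\Pi}_U$ is the preimage of $\overline{\Pi}^{1,N\text{-th}}_U$) gives $\overline{H}^1 \cong \overline{\Pi}_V^1$. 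Here $V \to U_{k^{\text{sep}}}$ is the Kummer covering with function field $k^{\text{sep}}(t)\bigl(\{(t-\lambda)^{1/N} : \lambda \in \Lambda - \{\infty\}\}\bigr)$; expanding in a local parameter at each puncture, the cusps of $V$ over any $\lambda \in \Lambda - \{\infty\}$ are canonically parametrized by tuples of $N$-th roots $\bigl((\lambda - \lambda')^{1/N}\bigr)_{\lambda' \in \Lambda - \{\lambda,\infty\}}$, while the cusps over $\infty$ are parametrized only by a $\mu_N$-torsor and therefore carry a trivial $G_{k(\mu_N)}$-action.

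Applying (\ref{wfseq}) to $V$ gives a $G_{k(\mu_N)}$-equivariant isomorphism
\begin{equation*}
W_{-2}(\overline{H}^{1,\text{pro-}\ell}) \;\cong\; (\mathbb{Z}[\text{cusps of } V]/\mathbb{Z}) \otimes_{\mathbb{Z}} \mathbb{Z}_\ell(1),
\end{equation*}
on which $\sigma$ acts by its cusp permutation twisted by $\chi_\ell(\sigma)$. As the right-hand side is a free $\mathbb{Z}_\ell$-module of large rank, a short coefficient comparison shows $\sigma \in \text{Ker}(\phi_H^{(\ell)})$ if and only if both the cusp permutation is trivial and $\chi_\ell(\sigma) = 1$; by the cusp description these conditions amount to $\sigma \in G_{k(\Gamma(\Lambda)^{1/N},\,\mu_{\ell^\infty})}$.

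Taking fixed fields of the union of these kernels over $\ell$ and $H$ then yields
\begin{equation*}
\kappa_N \;=\; \bigcap_\ell k\bigl(\Gamma(\Lambda)^{1/N},\,\mu_{\ell^\infty}\bigr) \;=\; k\bigl(\Gamma(\Lambda)^{1/N}\bigr),
\end{equation*}
where the last equality follows from the linear disjointness of the pro-$\ell$ cyclotomic extensions of $k(\Gamma(\Lambda)^{1/N})$ for distinct primes $\ell$: any common subextension would be simultaneously pro-$\ell$ for every $\ell$, hence trivial. The main difficulty I anticipate is the combinatorial identification of the cusps of $V$ as a $G_{k(\mu_N)}$-set — particularly at $\infty$, where one must carefully analyse the ramification coming from the product relation on the Kummer generators and confirm that no differences outside those generating $\Gamma(\Lambda)$ enter. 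Once this is secured, the remainder is routine bookkeeping with the cyclotomic character and the standard linear-disjointness fact.
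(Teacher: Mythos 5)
Your proposal follows the same broad outline as the paper's proof---identify the geometric cover $V$ as the standard $N$-th-power Kummer cover, read off the cusp residue fields from Kummer generators, and feed the resulting permutation-plus-cyclotomic description of $W_{-2}$ into the kernel computation. But it contains a genuine gap: you treat the Galois action on the cusps of $V$ as if it were canonically determined by the geometric cover, whereas the definition of $\kappa_N$ runs over \emph{all} $H \in \mathscr{H}_N$, i.e.\ over all Galois $G_{k(\mu_N)}$-models of that single geometric cover. Two such models over $k(\mu_N)$ can differ by twists: by Kummer theory the arithmetic cover attached to $H$ has function field $\tilde{k}\bigl((\epsilon_\lambda(t-\lambda))^{1/N} : \lambda\in\Lambda-\{\infty\}\bigr)$ for some tuple $\epsilon_\lambda \in \tilde{k}^\times$ depending on $H$, and the cusp residue field $\tilde{k}(E_H)$ is then generated by the elements $(\epsilon_{\lambda'}(\lambda-\lambda'))^{1/N}$, not the bare $(\lambda-\lambda')^{1/N}$. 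Your parametrization of cusps by $(\lambda-\lambda')^{1/N}$ implicitly specializes to the untwisted model (the $H_0$ of the paper, with all $\epsilon_\lambda = 1$), and the final display $\kappa_N = \bigcap_\ell k(\Gamma(\Lambda)^{1/N},\mu_{\ell^\infty})$ silently drops the intersection over $H$.

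The consequence is that your argument only yields the inclusion $\kappa_N \subset k(\Gamma(\Lambda)^{1/N})$: including one $H$ with $\tilde{k}(E_H) = k(\Gamma(\Lambda)^{1/N})$ already forces the fixed field of the union of kernels down to at most $k(\Gamma(\Lambda)^{1/N})$. For the reverse inclusion $\kappa_N \supset k(\Gamma(\Lambda)^{1/N})$ you must show that \emph{every} $H \in \mathscr{H}_N$ still has $\tilde{k}(E_H) \supset k(\Gamma(\Lambda)^{1/N})$; otherwise the kernel for some twisted $H$ would move elements of $\Gamma(\Lambda)^{1/N}$ and the union would cut $\kappa_N$ below the target. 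This is precisely the step the paper's proof supplies: after extracting the $\epsilon_\lambda$, it forms the ratios $\bigl(\tfrac{\epsilon_\lambda(0-\lambda)}{\epsilon_0(\lambda-0)}\bigr)^{1/N}$, $\bigl(\tfrac{\epsilon_\lambda(1-\lambda)}{\epsilon_1(\lambda-1)}\bigr)^{1/N}$, etc., to eliminate the $\epsilon$'s and recover the $N$-th roots of differences that generate $\Gamma(\Lambda)$, thereby getting $\tilde{k}(E_H) \supset k(\Gamma(\Lambda)^{1/N})$ for arbitrary $H$. Your concern at the end about the cusps over $\infty$ is a secondary matter; the twist-elimination over the finite punctures is the essential missing step. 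As a minor point, your linear-disjointness argument for the last equality is also slightly off---the extensions $K(\mu_{\ell^\infty})/K$ are not pro-$\ell$ because of the prime-to-$\ell$ torsion in $\mathbb{Z}_\ell^\times$---but that final intersection is standard and easily repaired.
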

\begin{proof}[Sketch of Proof]
Set $\tilde{k}=k(\mu_{N})$.  We fix $H\in\mathscr{H}_{N}$. For a closed subgroup $W$ of  $\Pi^{(2)}_{\mathbb{P}^{1}_{k}-\Lambda}$, let $U_{W}$ be the cover of $\mathbb{P}_{k}^{1}-\Lambda$ corresponding to  $W\subset \Pi^{(2)}_{\mathbb{P}^{1}_{k}-\Lambda}$, and $E_{W}$ the inverse image of $E$ by the natural projection $U_{W}^{\text{cpt}}\twoheadrightarrow X$.  By \cite{Na1990-405} (2.7)Proposition, the fixed field of  $\text{Ker}(\phi_{H}^{(\ell)})$ coincides with $\mathbb{Q}(\mu_\ell)\cdot\tilde{k}(E_{H})$.  Thus, first, we consider $\tilde{k}(E_{H})$.  We have that $K(U_{\overline{\Pi}^{1,N\text{-th}}_{U}})$ coincides with $\overline{k}((t-\lambda)^{\frac{1}{N}} \mid \lambda\in\Lambda-\{\infty\})$ $(=$ the composite field of  $\{\overline{k}((t-\lambda)^{\frac{1}{N}})\}_{\lambda\in\Lambda-\{\infty\}})$. Then we obtain that $K(U_{H})$ coincides with the composite field of 
$\{\overline{k}((t-\lambda)^{\frac{1}{N}})\cap K(U_{H})\}_{\lambda\in\Lambda-\{\infty\}}$. 
Since $\overline{k}((t-\lambda)^{\frac{1}{N}})\cap K(U_{H})$ is a $\mathbb{Z}/N\mathbb{Z}$-cover of $\mathbb{P}^{1}_{\tilde{k}}-\{\lambda,\infty\}$, there exists  $\epsilon_{\lambda}\in \tilde{k}^{\times}$ such that:
\[
\overline{k}((t-\lambda)^{\frac{1}{N}})\cap K(U_{H})=\tilde{k}((\epsilon_{\lambda}(t-\lambda))^{\frac{1}{N}})
\]
by Kummer theory. Hence we obtain that  $\tilde{k}(E_{H})=\tilde{k}((\epsilon_{\lambda'}(\lambda-\lambda'))^{\frac{1}{N}}\mid \lambda,\lambda'\in\Lambda-\infty)$. Sine  $\tilde{k}(E_{H})\ni  (\epsilon_{1}(0-1))^{\frac{1}{N}}$, $(\epsilon_{0}(1-0))^{\frac{1}{N}}$, $(\frac{\epsilon_{\lambda}(0-\lambda)}{\epsilon_{0}(\lambda-0)})^{\frac{1}{N}}=(-\frac{\epsilon_{\lambda}}{\epsilon_{0}})^{\frac{1}{N}}$, $(\frac{\epsilon_{\lambda}(1-\lambda)}{\epsilon_{1}(\lambda-1)})^{\frac{1}{N}}=(-\frac{\epsilon_{\lambda}}{\epsilon_{1}})^{\frac{1}{N}}$ ($\lambda\in\Lambda-\{0,1,\infty\})$
, we get  $\tilde{k}(E_{H})\supset k(\Gamma(\Lambda)^{\frac{1}{N}})$. There exists an open subgroup  $H_{0}$ of $\Pi^{(2)}_{U}$  containing $\overline{\Pi}^{[1]}_{U}/\overline{\Pi}^{[2]}_{U}$ such that  the image of $H_{0}$ in $\Pi^{(1)}_{U}$ is a Galois $G_{k(\mu_{N})}$-model of $\overline{\Pi}^{1,N\text{-th}}_{U}$ and that $\epsilon_{\lambda}=1$  for all $\lambda\in\Lambda-\{\infty\}$.  We have    $\tilde{k}(E_{H_{0}})=k(\Gamma(\Lambda)^{\frac{1}{N}})$.  Thus, we obtain
\[
\kappa_N(\Pi^{(2)}_{\mathbb{P}^{1}_{k}-\Lambda})=\underset{\ell}\bigcap( \mathbb{Q}(\mu_\ell)\cdot k(\Gamma(\Lambda)^{\frac{1}{N}}))=k(\Gamma(\Lambda)^{\frac{1}{N}}).
\]
\end{proof}

\noindent\underline{\textbf{Step 2}}\par

\begin{prp}[cf. \cite{Na1990-405} (3.1)Lemma]\label{step2prop}
 Let  $k'$ be a field finitely generated over $\mathbb{Q}$, and $\Gamma_{1}$, $\Gamma_{2}$ finitely generated subgroups of  $k'^{\times}$. If  $k'(\Gamma_{1}^{\frac{1}{\ell^{n}}})=k'(\Gamma_{2}^{\frac{1}{\ell^{n}}})$ for all $n\in \mathbb{Z}_{\geq0}$ and  all primes $\ell$,  then $\Gamma_{1}=\Gamma_{2}$.
\end{prp}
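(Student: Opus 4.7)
The plan is to translate the field-equality hypothesis into a subgroup equality in $k'(\mu_{\ell^n})^\times$ via Kummer theory, then apply the norm from $k'(\mu_{\ell^n})$ to $k'$ and combine this with the multiplicative structure of the finitely generated field $k'$ and variation of $\ell$ to force $\Gamma_1 = \Gamma_2$. Throughout, write $W_{k'}$ for the (finite) group of roots of unity in $k'^\times$; since $k'$ is finitely generated over $\mathbb{Q}$, the quotient $F := k'^\times/W_{k'}$ is free abelian, and $k'^\times \cong W_{k'} \times F$.

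Fix a prime $\ell$ and $n \geq 1$, and set $L_n := k'(\mu_{\ell^n})$. Assuming $\Gamma_i$ nontrivial, $\mu_{\ell^n} \subseteq k'(\Gamma_i^{1/\ell^n})$ automatically, so the hypothesis reads $L_n(\Gamma_1^{1/\ell^n}) = L_n(\Gamma_2^{1/\ell^n})$, and Kummer theory over $L_n$ yields $\Gamma_1 \cdot (L_n^\times)^{\ell^n} = \Gamma_2 \cdot (L_n^\times)^{\ell^n}$ in $L_n^\times$. By symmetry it suffices to show $\Gamma_1 \subseteq \Gamma_2$. For $\gamma \in \Gamma_1$, write $\gamma = \gamma_n y_n^{\ell^n}$ with $\gamma_n \in \Gamma_2$, $y_n \in L_n^\times$; applying the norm $N := N_{L_n/k'}$ and setting $d_n := [L_n:k']$ gives $(\gamma/\gamma_n)^{d_n} = N(y_n)^{\ell^n} \in (k'^\times)^{\ell^n}$. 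Read in $F$, this forces $\gamma/\gamma_n \in W_{k'} \cdot (k'^\times)^{\ell^{n-v_\ell(d_n)}}$. Since $n - v_\ell(d_n)$ stabilizes for $n$ large at $1 + v_\ell(e_\ell)$, where $e_\ell := [k' \cap \mathbb{Q}(\mu_{\ell^\infty}):\mathbb{Q}]$, we conclude
\[
\gamma \in \Gamma_2 \cdot W_{k'} \cdot (k'^\times)^{\ell^{1+v_\ell(e_\ell)}} \qquad\text{for every prime } \ell.
\]

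For all but finitely many $\ell$ one has $e_\ell = 1$, so the image of $\gamma$ in $A := k'^\times/(\Gamma_2 W_{k'})$ lies in $A^\ell$ for infinitely many primes. The torsion-free part of $A$ is free abelian (a quotient of $F$ by the saturation of a finitely generated subgroup), and no nonzero element of a free abelian group is divisible by infinitely many distinct primes; hence $\gamma$ has torsion image in $A$, giving $\gamma^m \in \Gamma_2 W_{k'}$ for some $m \geq 1$.

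To upgrade this to $\gamma \in \Gamma_2$, first reduce harmlessly to the case $W_{k'} \subseteq \Gamma_i$ (since $\mu_{\ell^n} \subseteq k'(\Gamma_i^{1/\ell^n})$, so $k'(\Gamma_i^{1/\ell^n}) = k'((\Gamma_i W_{k'})^{1/\ell^n})$). The remaining task is to show that the finite group $\mathrm{sat}(\Gamma_2)/\Gamma_2$ is ``Kummer-separated'': for every $\delta \in \mathrm{sat}(\Gamma_2) \setminus \Gamma_2$ there exist $\ell, n$ with $\delta \notin \Gamma_2 \cdot \bigl((L_n^\times)^{\ell^n} \cap k'^\times\bigr)$, contradicting the first paragraph if such $\delta$ lies in $\Gamma_1$. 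This is the main obstacle: the norm calculation above cannot resolve it, because the norm collapses precisely the root-of-unity contribution. One must instead examine the Kummer cocycle $\sigma \mapsto \sigma(y_n)/y_n \in \mu_{\ell^n}$ directly, or perform a local-field computation at a prime of $k'$ chosen to distinguish $\delta$ from $\Gamma_2$ modulo $\ell^n$-th powers. Once this separation is in place, $\Gamma_1 \subseteq \Gamma_2$, and by symmetry $\Gamma_1 = \Gamma_2$.
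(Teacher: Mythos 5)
There is a genuine gap, and you have correctly identified it yourself in the last paragraph: the norm argument cannot rule out that $\gamma$ lies in the saturation of $\Gamma_2$ without lying in $\Gamma_2$. The deeper reason the approach stalls is that the norm $N_{L_n/k'}$ destroys too much: after dividing by the $\ell$-part of $d_n=[L_n:k']$ you only recover a \emph{bounded} power $\gamma \in \Gamma_2 W_{k'}\cdot(k'^\times)^{\ell^{c_\ell}}$ with $c_\ell$ stabilizing (at $1+v_\ell(e_\ell)$), rather than $\ell^n$-divisibility for all $n$. Varying $\ell$ only gives divisibility by the first power of infinitely many primes, which pins $\gamma$ down to the saturation of $\Gamma_2 W_{k'}$ and no further; at that point the remaining finite obstruction $\mathrm{sat}(\Gamma_2)/\Gamma_2$ is exactly what the norm cannot see. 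A side concern is your ``harmless'' reduction to $W_{k'}\subset\Gamma_i$: if $\ell\mid |W_{k'}|$ then $k'((\Gamma_iW_{k'})^{1/\ell^n})$ is a strictly larger cyclotomic extension than $k'(\Gamma_i^{1/\ell^n})$, so the hypothesis is not transparently preserved.

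The paper resolves this by a fundamentally different mechanism. After enlarging $k'$ so that $\mu_4\subset k'$, it invokes a Kummer descent lemma (the role of the $\mu_4$ hypothesis) asserting $\bigl(\Gamma_1\cdot k'(\mu_{\ell^n})^{\times\ell^n}\bigr)\cap k'^\times=\Gamma_1\cdot k'^{\times\ell^n}$; this yields \emph{unbounded} $\ell^n$-divisibility of $\gamma$ modulo $\Gamma_1$ inside $k'^\times$ itself, for every $\ell$ and every $n$. It then observes that $\gamma/\gamma_1$ lies in $R^\times$, where $R$ is the integral closure of $\mathbb{Z}[\Gamma_2]$ in $k'$, and that integral closedness forces the $\ell^n$-th root to lie in $R^\times$ as well; since $R^\times$ is a finitely generated abelian group, $\bigcap_{n,\ell}\Gamma_1\cdot R^{\times\ell^n}=\Gamma_1$, killing the torsion obstruction in one stroke. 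If you want to complete your argument along the lines you attempted, you should replace the norm by such a descent lemma for $\ell^n$-th powers across the cyclotomic extension (where the $\mu_4$ assumption is essential for $\ell=2$), and then trade the infinite-rank group $k'^\times$ for the finitely generated group $R^\times$ to finish; the passage through $A=k'^\times/(\Gamma_2 W_{k'})$ and its torsion-free quotient, while salvageable via Pontryagin's criterion since $k'$ is countable, is more delicate than necessary.
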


\begin{proof}[Sketch of Proof]
We may replace $k'$ with a field finitely generated over $k'$, hence we may assume that $k'\supset \mu_{4}$.  Replacing $\Gamma_{2}$  with $\Gamma_{1}\Gamma_{2}$ if necessary, we may assume $\Gamma_{1}\subset \Gamma_{2}$. Let $\gamma\in \Gamma_{2}$. We have $k'(\mu_{\ell^{n}},\gamma^{\frac{1}{\ell^{n}}})\subset k'(\Gamma_{2}^{\frac{1}{\ell^{n}}})=k'(\Gamma_{1}^{\frac{1}{\ell^{n}}})$ and then $\gamma\in \Gamma_{1}\cdot k'(\mu_{\ell^{n}})^{\times \ell^{n}}$ by Kummer theory. By \cite{Ya2020} Lemma  2.1.3 (where we need the assumption $k'\supset \mu_{4}$), we obtain $\gamma\in (\Gamma_{1}\cdot k'(\mu_{\ell^{n}})^{\times \ell^{n}})\cap k'= \Gamma_{1}\cdot k'^{\times \ell^{n}}.$ Let $R$ be the integral closure of $\mathbb{Z}[\Gamma_{2}]$ in $k'$.  We have that $R^{\times}$ is a finitely generated  $\mathbb{Z}$-module because  $R$  is finitely generated over $\mathbb{Z}$.  We have $\gamma\in\Gamma_{1}\cdot R^{\times \ell^n}$  for all $n\in \mathbb{Z}_{\geq0}$ and  all primes $\ell$. Thus, we obtain $\gamma\in\Gamma_{1}$.
 \end{proof}

\ \\
\underline{\textbf{Step 3}}\par
In this step, we find out a relation between the equalities  $J(\lambda_{1})=J(\lambda_{2}) $ and $\Gamma(\{0,1,\infty,\lambda_{1}\})=\Gamma(\{0,1,\infty,\lambda_{2}\})$, and show Theorem \ref{thmnakamura}.  We write  $\mathcal{E}$ for the set of all elements $e\in \overline{\mathbb{Z}}^{\times}$ which satisfies $(1-e)\mid2$. 

\begin{prp}[cf. \cite{Na1990-405} (4.7)Theorem]\label{step3prop}
Let $\lambda_{1}$, $\lambda_{2}\in k-\{0,1\}$. Suppose that:
\begin{enumerate}[(i)]
\item $\Gamma(\{0,1,\infty,\lambda_{1}\})=\Gamma(\{0,1,\infty,\lambda_{2}\})$, 
\item $J(\lambda_{1})$ contains no elements of $\mathcal{E}$.
\end{enumerate}
Then $J(\lambda_{1})=J(\lambda_{2})$
\end{prp}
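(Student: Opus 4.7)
The plan is to translate hypothesis (i) into explicit multiplicative expressions for $\lambda_2$ and $1-\lambda_2$ in terms of $\lambda_1$ and $1-\lambda_1$, and then to use hypothesis (ii) to enumerate the admissible exponents.

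First, I observe that $\Gamma(\{0,1,\infty,\lambda_i\})$ coincides with the subgroup $\langle -1,\lambda_i,1-\lambda_i\rangle$ of $k^\times$. Hence (i) yields signs $\varepsilon_1,\varepsilon_2\in\{\pm 1\}$ and integers $a,b,c,d\in\mathbb{Z}$ with
\[
\lambda_2=\varepsilon_1\,\lambda_1^{a}(1-\lambda_1)^{b},\qquad 1-\lambda_2=\varepsilon_2\,\lambda_1^{c}(1-\lambda_1)^{d},
\]
together with symmetric expressions for $\lambda_1$ and $1-\lambda_1$ in terms of $\lambda_2$ and $1-\lambda_2$. I would next use (ii) to show that $\lambda_1$ and $1-\lambda_1$ are multiplicatively independent modulo $\{\pm 1\}$ in $k^\times$: any nontrivial relation $\lambda_1^{m}(1-\lambda_1)^{n}=\pm 1$ with $(m,n)\neq(0,0)$ would, after running through the six $J$-substitutions and clearing denominators, force one of the members of $J(\lambda_1)$ to be an algebraic unit whose ``$1-$'' divides $2$, i.e., an element of $\mathcal{E}$, contradicting (ii). Substituting the two pairs of expressions into each other then forces the exponent matrix $\left(\begin{smallmatrix}a&b\\c&d\end{smallmatrix}\right)$ to have determinant $\pm 1$.

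The crux is then the additive relation $\lambda_2+(1-\lambda_2)=1$, which becomes
\[
\varepsilon_1\,\lambda_1^{a}(1-\lambda_1)^{b}+\varepsilon_2\,\lambda_1^{c}(1-\lambda_1)^{d}=1,
\]
a Laurent-monomial identity in $x:=\lambda_1$ and $y:=1-\lambda_1$. Either this equation holds identically in $x$, in which case a direct enumeration (with the determinant constraint $|ad-bc|=1$ keeping the exponents bounded) yields exactly six tuples $(a,b,c,d,\varepsilon_1,\varepsilon_2)$, each producing one element of $J(\lambda_1)$ as $\lambda_2$; or, after clearing denominators, it cuts out a finite subset of ``sporadic'' values of $x$. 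The key verification is that every such sporadic root again forces some element of $J(\lambda_1)$ into $\mathcal{E}$, contradicting (ii). Thus only the six universal tuples survive, giving $\lambda_2\in J(\lambda_1)$ and hence $J(\lambda_1)=J(\lambda_2)$.

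The main obstacle is the sporadic-root analysis in the last step: extracting four integer exponents from a single scalar identity requires both the determinant constraint and a case-by-case check that every non-universal tuple produces only those values of $\lambda_1$ whose $J$-orbit meets $\mathcal{E}$ — precisely the values excluded by hypothesis (ii).
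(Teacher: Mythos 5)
Your overall plan — translate hypothesis (i) into a multiplicative expression $\lambda_2=\varepsilon_1\lambda_1^a(1-\lambda_1)^b$, $1-\lambda_2=\varepsilon_2\lambda_1^c(1-\lambda_1)^d$ and then exploit the additive constraint $\lambda_2+(1-\lambda_2)=1$ — is in spirit what Nakamura's (4.8)Lemma does, whereas the paper's proof of this proposition does something logically lighter: it only extracts from (ii) two auxiliary facts, namely \textbf{(b)} that there is a valuation $v$ with $v(1-\lambda_1)>0$ after a suitable $J$-substitution, and \textbf{(c)} that $(1-\lambda_1)(1+\lambda_1^n)\neq 2$ for all $n$, and then cites that lemma. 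So you are in effect trying to reprove the cited lemma from scratch. That is a legitimate ambition, but two essential steps are not actually correct or not carried out.

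First, the multiplicative independence claim is false as stated. You assert that any nontrivial relation $\lambda_1^m(1-\lambda_1)^n=\pm 1$ would force some element of $J(\lambda_1)$ into $\mathcal{E}$, contradicting (ii). Take $k=\mathbb{Q}(\zeta_5)$ and $\lambda_1=\zeta_5$. Then $\lambda_1^5=1$, giving the nontrivial relation $(m,n)=(5,0)$. Yet $1-\zeta_5$ is a uniformizer above $5$, not a divisor of $2$, and one checks that none of the six elements of $J(\zeta_5)$ is a unit whose ``$1-$'' divides $2$; so $J(\zeta_5)\cap\mathcal{E}=\emptyset$ and (ii) holds. More generally, any root of unity of order coprime to $6$ gives such a dependence without violating (ii). Without multiplicative independence, the exponents $a,b,c,d$ are not well-defined and your determinant argument and enumeration of ``six universal tuples'' collapse.

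Second, even granting the framework, the ``sporadic-root analysis'' you identify as the main obstacle is precisely the content of Nakamura's (4.8)Lemma and cannot be waved away: it requires a genuine valuation-theoretic argument (the role of condition (b) in the paper) to bound the exponents and a divisibility argument (condition (c)) to exclude the remaining degenerate cases. Your sketch gestures at this step but supplies no mechanism by which a sporadic solution is shown to put some member of $J(\lambda_1)$ into $\mathcal{E}$. Since this is the entire mathematical substance of the statement, the proposal does not constitute a proof. The cleaner route, as the paper does, is to verify (b) and (c) from (ii) — an elementary case distinction on whether $J(\lambda_1)$ meets $O_k^\times$ — and then invoke the lemma, rather than reconstructing the lemma's proof.
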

\begin{proof}[Sketch of Proof]
Since   $\Gamma(\{0,1,\infty,\lambda_{i}\})=\langle-1,\lambda_{i},1-\lambda_{i}\rangle$, we obtain 
\begin{itemize}
\item[(\text{a})]$\langle-1,\lambda_{1},1-\lambda_{1}\rangle=\langle-1,\lambda_{2},1-\lambda_{2}\rangle$
\end{itemize}
by (i).  
When $J(\lambda_{1})\cap O_{k}^{\times}\not=\emptyset$, we may assume that  $\lambda_{1}\in O_{k}^{\times}$ because  the conditions (i)(ii)  and the conclusion are preserved if  $\lambda_{i}$ is replaced with  another element of $J(\lambda_{i})$.  Then we get  $(1-\lambda_{1})\in O_{k}-O_{k}^{\times}$ by (ii). Hence   there exists an additive discrete valuation $v$ of $k$ such that $v(1-\lambda_{1})>0$.  When $J(\lambda_{1})\cap O_{k}^{\times}=\emptyset$, there exists an additive discrete valuation $v$ of $k$ such that $v(1-\lambda_{1})\not=0$.  Replacing $\lambda_{1}$ by $\frac{\lambda_{1}}{\lambda_{1}-1}$ if necessary, we obtain that $v(1-\lambda_{1})>0$.  Thus, the following holds by replacing  $\lambda_{1}$ with  another element of $J(\lambda_{i})$.
\begin{itemize}
\item[(\text{b})] There exists an additive discrete valuation $v$ of $k$ such that $v(1-\lambda_{1})>0$. 
\end{itemize}
If $(1-\lambda_{1})(1+\lambda_{1}^{n})=2$ for some $n\in \mathbb{Z}$, then $\lambda_{1}\in O_{k}^{\times}$ and we get  $\lambda_{1}\in \mathcal{E}$. Thus, the following holds by (ii)
\begin{itemize}
\item[(\text{c})] $(1-\lambda_{1})(1+\lambda_{1}^{n})\neq 2$ for all $n\in\mathbb{Z}$.
\end{itemize}
 By  \cite{Na1990-405} (4.8)Lemma and (a)(b)(c),  we get $J(\lambda_{1})=J(\lambda_{2})$.
\end{proof}

\begin{proof}[Sketch of Proof of Theorem \ref{thmnakamura}]
 Already we have $\Gamma(\{0,1,\infty,\lambda_{1}\})=\Gamma(\{0,1,\infty,\lambda_{2}\})$ by Proposition \ref{rigidityprp} and Proposition \ref{step2prop}.  If   $J(\lambda_{1})\cap \mathcal{E}=\emptyset$, then the assertion follows by  Proposition \ref{step3prop}. Thus, we may assume that $J(\lambda_{i})\cap \mathcal{E}\not =\emptyset$ for $i=1$, $2$. When $k=\mathbb{Q}$,  $J(\lambda_{1})=J(\lambda_{2})=\{-1,2,\frac{1}{2}\}$ by $\mathbb{Q}\cap \mathcal{E}=\{-1\}$.   Hence the assertion follows. \par
When $k$ is a quadratic field $\neq \mathbb{Q}(\sqrt{2})$, we can directly show that $e \in k\cap \mathcal{E}$ is either
\[
-1,\ \pm\sqrt{-1},\ \frac{1\pm\sqrt{-3}}{2},\ 2\pm\sqrt{3},\ \frac{3\pm\sqrt{5}}{2},\ \frac{\pm1\pm\sqrt{5}}{2},\ \text{ or }\ \pm2\pm\sqrt{5}.
\]
Hence we may only consider the cases where $k=\mathbb{Q}(\sqrt{-1}),$ $\mathbb{Q}(\sqrt{3})$, $\mathbb{Q}(\sqrt{-3})$ and $\mathbb{Q}(\sqrt{5})$.  In these cases, we can show that $\Gamma(\{0,1,\infty,\lambda_{1}\})\not=\Gamma(\{0,1,\infty,\lambda_{2}\})$  for all $\lambda_{1}$, $\lambda_{2}\in k\cap \mathcal{E}$ with $\lambda_{1}\neq \lambda_{2}$.  Thus, the assertion follows. \par
When  there exists a prime ideal $\mathfrak{p}$ of $O_{k}$ unramified in $k/\mathbb{Q}$ such that  $|O_k/\mathfrak{p}|=2$, we  show that  every element $\lambda_{1}\in k\cap \mathcal{E}$ satisfies the conditions (b) and (c) in the proof of Proposition \ref{step3prop}. Let $v$ denote an additive discrete valuation of $k$ corresponding to $\mathfrak{p}$.  By $|O_k/\mathfrak{p}|=2$, we have  $\lambda_{1}-1\in \mathfrak{p}$. Then $v(1+ \lambda_{1}^{n})>0$ for all $n\in \mathbb{Z}$.  In particular,  the condition (b)  in the proof of Proposition \ref{step2prop} follows. If $(1-\lambda_{1})(1+\lambda_{1}^{n})=2$ for some $n\in \mathbb{Z}$, then $2\in \mathfrak{p}^{2}$ because $1-\lambda_{1}$, $1+ \lambda_{1}^{n}\in \mathfrak{p}$. However, this  contradicts the assumption. Thus,   the condition (c) in the proof of Proposition \ref{step3prop}  follows. Now, the assertion follows by   \cite{Na1990-405} (4.8)Lemma. 
\end{proof}

\section{When $m\geq 5$ and  $k$ is an \textbf{S}$\ell$\textbf{F}}
 Through this section,  we fix a prime $\ell$ and  assume that $k$ is an $\textbf{S}\ell\textbf{F}$.  
In the paper \cite{Mo1999}, Mochizuki proved  the Grothendieck conjecture for hyperbolic curves over $k$ (\cite{Mo1999} Theorem A) in a much wider form than the one defined in the Introduction.  Moreover, Mochizuki  also showed  the $m$-step solvable Grothendieck conjecture for hyperbolic curves over $k$  (\cite{Mo1999} Theorem $A{'}$) as one of  the $m$-step solvable  versions of the main theorem.  In this section, we introduce the following theorem.

\begin{thm}[\cite{Mo1999} Theorem A${'}$]\label{thmmochizuki}
Assume that $m\geq 5$, and $U_{2}$ is a hyperbolic curve over $k$.  Let   $\Phi_{m}:\Pi_{U_{1}}^{(m,\text{\rm{pro}-}\ell)}\xrightarrow[G_{k}]{} \Pi^{(m,\text{\rm{pro}-}\ell)}_{U_{2}}$  be a continuous open $G_{k}$-homomorphism, and $\overline{\Phi}_{m-3}:\overline{\Pi}_{U_{1}}^{m-3,\text{\rm{pro}-}\ell}\rightarrow \overline{\Pi}^{m-3,\text{\rm{pro}-}\ell}_{U_{2}}$ the homomorphism defined by $\Phi_{m}$. Then $\Phi_{m}$  induces a unique dominant $k$-morphism $\mu: U_{1}\xrightarrow[k]{}U_{2}$ whose  induced homomorphism on geometric fundamental groups coincides (up to composition with an inner automorphism arising from $\overline{\Pi}^{m-3,\text{\rm{pro}-}\ell}_{U_{2}}$) with $\overline{\Phi}_{m-3}$. 
  In particular,    the (\text{pro}-$\ell$)  $m$-step solvable  Grothendieck conjecture   for   hyperbolic curves over $k$  holds.
\end{thm}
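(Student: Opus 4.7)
The strategy is to reduce to Mochizuki's Main Theorem (\cite{Mo1999} Theorem A), which handles continuous open $G_k$-homomorphisms between the full pro-$\ell$ tame fundamental groups of hyperbolic curves over an \textbf{S}$\ell$\textbf{F}. Equivalently, one may retrace the proof of Theorem A while verifying that every group-theoretic reconstruction it relies on can already be carried out at solvable depth $m$ (for $m\geq 5$), and keep track of the inner-automorphism indeterminacy which degrades from $m$ to $m-3$ during the argument.

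Concretely, I would proceed in three stages. First, using the weight-filtration formalism recalled in Step 1 of Section 1, reconstruct the cuspidal decomposition groups inside $\overline{\Pi}_{U_i}^{m-3,\text{pro-}\ell}$ ($i=1,2$) group-theoretically from $\Pi_{U_i}^{(m,\text{pro-}\ell)}$: since the weight-$-2$ submodules of the abelianization already pin down cuspidal inertia, this reconstruction is available at the metabelian level and a fortiori for $m\geq 5$. Second, pass to the cofinal system of open subgroups $H\subset \Pi_{U_1}^{(m,\text{pro-}\ell)}$ containing $\overline{\Pi}^{[m-1],\text{pro-}\ell}_{U_1}/\overline{\Pi}^{[m],\text{pro-}\ell}_{U_1}$, corresponding to finite solvable pro-$\ell$ \'etale covers of $U_1$, and reapply the same weight reconstruction to $H$ and its $\Phi_m$-image: the passage from the ambient group to such open subgroups consumes three levels of the commutator filtration, accounting for the shift $m\mapsto m-3$. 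Third, feed these compatible reconstructions into Mochizuki's ``Belyi cuspidalization''-style argument from the proof of Theorem A, now with all required ingredients available at finite solvable depth, to produce a dominant $k$-morphism $\mu:U_1\to U_2$.

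The main obstacle is the bookkeeping at the gluing stage: one must verify that the reconstructed decomposition groups in the various open subgroups transform under $\Phi_m$ compatibly with restriction, with the $G_k$-action, and with the inner-automorphism indeterminacy, and one must control how these indeterminacies propagate across the levels of the tower. The hyperbolicity of $U_2$ plays a decisive role here: it ensures that the centralizer of an open subgroup of $\overline{\Pi}_{U_2}^{m-3,\text{pro-}\ell}$ is trivial, which forces the uniqueness of $\mu$ and rigidifies the gluing. The induced homomorphism on geometric fundamental groups then agrees with $\overline{\Phi}_{m-3}$ up to composition with an inner automorphism arising from $\overline{\Pi}_{U_2}^{m-3,\text{pro-}\ell}$, as required; the weak (pro-$\ell$) $m$-step solvable Grothendieck conjecture for hyperbolic curves over $k$ then follows by applying the statement to an isomorphism and its inverse.
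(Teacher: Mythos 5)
Your high-level framing — retrace Mochizuki's proof of Theorem A while tracking which solvable quotients carry the needed structure, degrading $m$ to $m-3$ in the output — is correct and matches the paper. But the technical machinery you then plug in is not what Mochizuki 1999 uses, and this is a genuine mismatch, not just a cosmetic one.

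Mochizuki's argument is a $p$-adic Hodge-theoretic one, and that is exactly what the paper's outline describes. The core steps are: (a) reconstruct $H^1(U_i,\mathbb{Q}_\ell)$ group-theoretically from $\Pi_{U_i}^{(1,\text{pro-}\ell)}$; (b) apply the Hodge--Tate decomposition to recover $H^0(U_i,\omega_{U_i/k})$ as the $G_k$-invariants of $H^1(U_i,\mathbb{Q}_\ell)\otimes_{\mathbb{Q}_\ell}\mathbb{C}_\ell(1)$, giving a map $\theta$ on differentials and hence a rational map $\mathbb{P}(\theta)$ between canonical models; (c) reduce to showing that $\theta$ ``preserves relations'' in the canonical embedding of a non-hyperelliptic proper $U_2$; (d) verify this by taking $L$ the quotient field of the completed local ring of an irreducible component of a special fiber of a proper model of $U_1$, obtaining $\alpha^L\colon G_L\to\Pi_{U_2}^{(1,\text{pro-}\ell)}$, and proving $\alpha^L$ is geometric, a step which uses $\Pi_{U_2}^{(3,\text{pro-}\ell)}$ to produce a line bundle of degree prime to $\ell$. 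Your Stage~1 (weight-filtration reconstruction of cuspidal decomposition groups) and Stage~3 (``Belyi cuspidalization'') belong to a different circle of ideas: the former is the Nakamura/Yamaguchi method of Sections~1 and~3 of this survey, and Belyi cuspidalization is a technique Mochizuki introduced years later and that plays no role in the 1999 paper. Relatedly, your explanation of the $m\mapsto m-3$ shift (``consuming three levels when passing to open subgroups'') does not match the actual accounting: the core non-hyperelliptic proper case needs $m=3$ for the line-bundle construction, and lifting from a non-hyperelliptic cover cut out inside $\Pi_{U_2}^{(2,\text{pro-}\ell)}$ adds two more levels, yielding $m\geq 5$. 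Finally, the uniqueness of $\mu$ is rigidified in the paper via the fact that a dominant $k$-morphism to a curve of genus $\geq 2$ is determined by the induced map on $\overline{\Pi}^{1,\text{pro-}\ell}$, not by centrality of open subgroups as you state.

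In short: keep your opening paragraph, but Stages 1--3 need to be replaced by the Hodge--Tate / canonical-embedding / $L$-valued-point argument; as written the proposal does not reproduce the proof.
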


\begin{rem}
\begin{enumerate}
\item In \cite{Mo1999} Theorem A${'}$,   it is not assumed that $U_{1}$ is a curve over $k$, but  instead, it is assumed that  $U_{1}$ is a (smooth) variety over $k$. 
\item  The uniqueness  assertion for $\mu$ in Theorem \ref{thmmochizuki} is not explicitly stated  in  \cite{Mo1999} Theorem A${'}$. However,  the uniqueness follows from the fact  that  a dominant  $k$-morphism $X_{1}\rightarrow X_{2}$ is determined by the open homomorphism $\overline{\Pi}_{X_{1}}^{1,\text{\rm{pro}-}\ell}\rightarrow \overline{\Pi}^{1,\text{\rm{pro}-}\ell}_{X_{2}}$ induced by $X_{1}\rightarrow X_{2}$ when $g_{2}\geq 2$. See the first paragraph of the proof of  \cite{Mo1999} Theorem 14.1.
\item Mochizuki also proved the $m$-step solvable Grothendieck conjecture  when $U_{1}$ is a pro-variety (i.e.,  a $k$-scheme which can be written as a projective limit of smooth varieties over $k$ such that the transition morphisms are all birational)   and $U_{2}$ is a hyperbolic pro-curve  (i.e.,  a $k$-scheme which can be written as a projective limit of smooth hyperbolic curves over $k$ such that the transition morphisms are all birational) and $m\geq 3m_{1}+6\text{\rm{tr}}_{k}(K(U_{1}))+2$, where $m_{1}$ stands for  the minimal transcendence degree over $\mathbb{Q}_{\ell}$ of all finitely generated field extensions of  $\mathbb{Q}_{\ell}$ that contain $k$. See \cite{Mo1999} Theorem A${''}$.
\end{enumerate}
\end{rem}

\begin{proof}[Outline of Proof]
The second assertion follows from the first assertion (we remark that this proof needs the uniqueness of $\mu$).  We consider the first assertion.  In the next paragraph,   we assume that: (i) $k$ is a finite extension of $\mathbb{Q}_{\ell}$; (ii)  $U_{2}$ is a non-hyperelliptic proper hyperbolic curve; (iii) $m =3$.

By \cite{Mo1999} Lemma 0.4, we reconstruct group-theoretically  the $\ell$-adic cohomology group $H^{1}(U_{i,},\mathbb{Q}_{\ell})$ from $\Pi_{U_{i}}^{(1,\text{pro-}\ell)}$.  Using the Hodge-Tate decomposition for $\ell$-adic cohomology,   we obtain $H^{0}(U_{i},\omega_{U_{i}/k})$ as the $G_{k}$-invariant part of  $H^{1}(U_{i},\mathbb{Q}_{\ell})\otimes_{\mathbb{Q}_{\ell}}\mathbb{C}_{\ell}(1)$, where $\mathbb{C}_{\ell}(1)$ is the Tate-twist of $\mathbb{C}_{\ell}$. Hence, we get a (an injective) morphism $\theta:H^{0}(U_{2},\omega_{U_{2}/k})\rightarrow H^{0}(U_{1},\omega_{U_{1}/k})$ from $\Phi_{m}$, which induces a  morphism $\mathbb{P}(\theta): \mathbb{P}(H^{0}(U_{1},\omega_{U_{1}/k}))\rightarrow \mathbb{P}(H^{0}(U_{2},\omega_{U_{2}/k}))$. We have a canonical morphism $X_{i}\rightarrow \mathbb{P}(H^{0}(X_{i},\omega_{U_{i}/k}))$ for $i=1,2$. Moreover,  since $U_{2}$ is non-hyperelliptic proper hyperbolic,  $X_{2}=U_{2}$ and $U_{2}\rightarrow \mathbb{P}(H^{0}(U_{2},\omega_{U_{2}/k}))$ is an embedding.    For each  positive integer  $i$, we set:
\[
\mathcal{R}^{i}:=\text{Ker}(\bigotimes^{i} H^{0}(U_{2},\omega_{U_{2}/k})\rightarrow H^{0}(U_{2},\omega^{\otimes i}_{U_{2}/k})), 
\]
which is the  set of relations of degree $i$ defining $U_{2}\hookrightarrow \mathbb{P}(H^{0}(U_{2},\omega_{U_{2}/k}))$.  If the map:
\[
\kappa^{i}:\bigotimes^{i} H^{0}(U_{2},\omega_{U_{2}/k}) \xrightarrow{\otimes^{i}\theta}  \bigotimes^{i} H^{0}(U_{1},\omega_{U_{1}/k})\rightarrow H^{0}(U_{1},\omega^{\otimes i}_{U_{1}/k})
\]
satisfies $\kappa^{i}(\mathcal{R}^{i})=0$ for all $i$,  we say that  $\theta$ preserves relations.  When   $\theta$ preserves relations,  $\mathbb{P}(\theta)$ induces a morphism $U_{1}\rightarrow U_{2}$. Thus, we have only  to show that the morphism $\theta$ (induced by  $\Phi_{3}:\Pi_{U_{1}}^{(3,\text{\rm{pro}-}\ell)}\rightarrow \Pi^{(3,\text{\rm{pro}-}\ell)}_{U_{2}}$) preserves relations.  Let $\mathcal{X}_{1}$ be a (proper) model of $U_{1}$ over $O_{k}$, and $\mathfrak{p}$ an irreducible component of a special fiber of $\mathcal{X}_{1}$. We write $L$ for the quotient field of the completion of the local ring $O_{\mathcal{X}_{1},\mathfrak{p}}$. From the definition of $L$, we have a natural $L$-valued point $\text{Spec}(L)\rightarrow U_{1}$. In particular, we get  $\alpha^{L}:G_{L}\rightarrow \Pi_{U_{1}}^{(1,\text{pro-}\ell)}\rightarrow \Pi_{U_{2}}^{(1,\text{pro-}\ell)}$. If the morphism $\alpha^{L}$ is geometric (i.e.,  comes from some $L$-valued point $ \text{Spec}(L)\rightarrow U_{2}$), then $\alpha^{L}$ preserves relations. Thus, we have only to  show that $\alpha^{L}$ is geometric. This step is the main part of the first half of \cite{Mo1999}.  To construct a line bundle of degree prime to $\ell$ in this step, we need $\Pi_{U_{2}}^{(3,\text{pro-}\ell)}$. For more detail,  see \cite{Mo1999} sections 1-7, 18. \par
For the assumption (i), see  \cite{Mo1999} section $15$.  For arbitrary  hyperbolic curve $U_{2}$,  there always exists a covering $V_{2}\rightarrow U_{2}$ corresponding to  an open subgroup of  $\Pi_{U_{2}}^{(2,\text{pro-}\ell)}$ such that $g(V_{2})\geq 2$, that  the compactification   $V_{2}^{\text{cpt}}$ is non-hyperelliptic, and  that $V_{2}^{\text{cpt}}\rightarrow U_{2}^{\text{cpt}}$ has arbitrarily large (specified) ramification over all points of $U_{2}^{\text{cpt}}\rightarrow U_{2}$. By using this cover, we can lift the assertion (ii) if $m\geq 5$. 
\end{proof}

\section{When $m\geq 3$,   $g=0$ and  $k$ is an  $\textbf{FGF}_{\infty}$.}

 Through this section, we assume that $k$ is an $\textbf{FGF}$.  In the paper \cite{Na1990-411}, Nakamura proved the  Grothendieck conjecture   for genus $0$ hyperbolic curves over fields finitely generated over $\mathbb{Q}$.   A key step of the proof in  \cite{Na1990-411} is to reconstruct group-theoretically the inertia groups of $\overline{\Pi}_{U}$ from $\Pi_{U}$.  In the paper \cite{Ya2020}, the author proved the following   theorem  by giving the  $m$-step solvable version of   the method of the proof  in  \cite{Na1990-411}.

\begin{thm}[\cite{Ya2020} Theorem 2.4.1]\label{thmnyamaguchi}
Assume that $m\geq 3$, and  $U_{1}$ is a genus $0$ hyperbolic curve over $k$.  If, moreover, $p> 0$, we assume that:
$(\dag)$ For each $S'\subset E_{1, \overline{k}}\ \text{with}\ |S'|=4$, the curve  $X_{1,\overline{k}}-S'$ does not  descend to a curve over  $\overline{\mathbb{F}}_{p}$. Then the following holds.
\[
\Pi^{(m,p')}_{U_{1}}\underset{G_k}{\cong} \Pi^{(m,p')}_{U_{2}}\iff
\begin{cases}
U_{1}\underset{k}{\cong}U_{2} & (p=0) \\
U_{1}(n_{1})\underset{k}{\cong}U_{2}(n_{2})\text{ for some }n_{1},n_{2}\in \mathbb{Z}_{\geq 0} & (p>0)
\end{cases}
\]
  In other words,   the (pro-prime to $p$) $m$-step solvable  Grothendieck conjecture   for  genus $0$ hyperbolic curves over $k$ under $(\dag$)  holds.
\end{thm}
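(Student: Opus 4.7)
The plan is to follow Nakamura's strategy in \cite{Na1990-411} for the full Grothendieck conjecture on genus zero hyperbolic curves, carried out in the $m$-step solvable pro-prime-to-$p$ setting. The argument splits into three tasks: (i) reconstruction of the inertia subgroups of $\overline{\Pi}_{U_1}^{m,p'}$ group-theoretically from $\Pi_{U_1}^{(m,p')}$; (ii) reconstruction of the cross-ratios of the cusps via the Kummer--rigidity package of Section 1; (iii) descent from cross-ratio data to a $k$-isomorphism, up to Frobenius twist when $p>0$.

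For (i) I would mirror the weight-filtration approach used in Step 1 of Section 1, which is still available over an arbitrary \textbf{FGF} for the pro-$\ell$ filtration on $\overline{\Pi}_{U_1}^{1,\text{pro-}\ell}$ with $\ell\neq p$ (see \cite{Ya2020} subsection 1.3). For each open $H\subset \Pi_{U_1}^{(m,p')}$ containing $\overline{\Pi}_{U_1}^{[1],p'}/\overline{\Pi}_{U_1}^{[m],p'}$, the weight-$(-2)$ piece $W_{-2}(\overline{H}^{1,\text{pro-}\ell})$ picks out the inertia characters at the cusps of the cover $V_1\to U_1$ classified by $H$; varying $H$ and taking a limit recovers the $G_k$-set of inertia subgroups of $\overline{\Pi}_{U_1}^{m,p'}$ functorially. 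A $G_k$-isomorphism $\Pi_{U_1}^{(m,p')}\xrightarrow{\sim}\Pi_{U_2}^{(m,p')}$ then induces a $G_k$-equivariant bijection $E_1(\overline{k})\xrightarrow{\sim}E_2(\overline{k})$. The bound $m\geq 3$ is exactly what makes this step go through: the weight filtration is visible on the abelianization, but one needs a further commutator layer for the family $\{H\}$ to be rich enough to separate individual inertia subgroups.

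For (ii), any four-element subset of $E_1(\overline{k})$ can be placed, after a $\mathrm{PGL}_2$-change of coordinates, in the form $\Lambda_1=\{0,1,\infty,\lambda_1\}$, and Proposition \ref{rigidityprp} together with Proposition \ref{step2prop} reconstruct $\Gamma(\Lambda_1)$ group-theoretically from the 2-step solvable subquotient associated to the corresponding 4-punctured subcover inside $\Pi_{U_1}^{(m,p')}$; running the same construction on $U_2$ yields $\Gamma(\{0,1,\infty,\lambda_1\})=\Gamma(\{0,1,\infty,\lambda_2\})$. Promoting this equality to $J(\lambda_1)=J(\lambda_2)$ proceeds as in Proposition \ref{step3prop} and case (c) of the proof of Theorem \ref{thmnakamura}: one needs an additive discrete valuation of small residue characteristic to rule out the exceptional orbits in $\mathcal{E}$, and this is precisely where hypothesis $(\dag)$ enters when $p>0$. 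The assumption that no 4-punctured subcurve descends to $\overline{\mathbb{F}}_p$ ensures we are genuinely in the $\textbf{FGF}_\infty$ regime, so that the finite generation of $R^\times$ exploited in Proposition \ref{step2prop} and the valuation dichotomy used in Proposition \ref{step3prop} both apply verbatim.

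Finally, collecting the cross-ratio data over all 4-element subsets of $E_1(\overline{k})$ determines the $\mathrm{PGL}_2(\overline{k})$-orbit of $E_1(\overline{k})$ in $(\mathbb{P}^1(\overline{k}))^r$, and $G_k$-equivariance descends the resulting $\overline{k}$-isomorphism $U_{1,\overline{k}}\cong U_{2,\overline{k}}$ either to a $k$-isomorphism $U_1\cong_k U_2$ in characteristic zero, or to the Frobenius-twisted statement $U_1(n_1)\cong_k U_2(n_2)$ in positive characteristic. The main obstacle is task (i): unlike in Theorem \ref{thmnakamura}, where one is handed a single 4-punctured model and a 2-step quotient already suffices, here the target group is a solvable quotient of the fundamental group of an arbitrary genus-zero hyperbolic curve over an arbitrary \textbf{FGF}, and one must extract both the combinatorics of the cusp set and all its 4-element cross-ratios from a single $m$-step solvable datum; this is what forces $m\geq 3$ and what constitutes the technical core of \cite{Ya2020}.
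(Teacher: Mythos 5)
Your high-level skeleton (reconstruct inertia, then cross-ratios, then descend) is the right one, but in the two middle stages you substitute the machinery of Section~1 for that of Section~3, and this is a genuine dead end, not merely a different route. For step~(ii) you propose to read off $\Gamma(\Lambda_1)=\Gamma(\Lambda_2)$ via the strong rigidity invariant $\kappa_N$ of Proposition~\ref{rigidityprp} and then \emph{promote} this to $J(\lambda_1)=J(\lambda_2)$ as in Proposition~\ref{step3prop}. That promotion is exactly what requires the arithmetic hypotheses (a)--(c) on $k$ in Theorem~\ref{thmnakamura} (existence of a valuation of residue characteristic~$2$, control of the exceptional orbits in $\mathcal{E}$); it does not go through over an arbitrary \textbf{FGF}, and this is precisely why Nakamura's 1990 argument did not yield the full result even over number fields. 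The point of having the inertia reconstruction of Proposition~\ref{1.4.5} is that it makes the strong rigidity invariant unnecessary: once $\Phi_1$ is known to preserve decomposition groups at cusps, one has a \emph{labelled} bijection of cusps, so one may use the \emph{weak} rigidity invariant $\kappa_{\ell^n}(\varepsilon,\delta)$ of Definition~\ref{2.1.1} attached to ordered cusp data, which by Proposition~\ref{2.1.3} recovers $k(\mu_{\ell^n},\lambda(\varepsilon,\delta)^{1/\ell^n})$ for each labelled $4$-tuple. Lemma~\ref{nakamuralmm2} then gives the pair $\bigl(\langle\lambda(\varepsilon,\delta)\rangle,\langle1-\lambda(\varepsilon,\delta)\rangle\bigr)$ (the second coming from the complementary splitting $\varepsilon',\delta'$), and Lemma~\ref{2.2.1} pins down $\lambda$ itself up to the isolated exception $\{\rho,\rho^{-1}\}$ in characteristic~$0$, or up to a Frobenius power in characteristic~$p$; no valuation condition on $k$ ever enters, and the hypothesis $(\dag)$ is used only to guarantee that the cross-ratios are non-torsion so that Lemma~\ref{2.2.1}(2) applies --- not to import the valuation dichotomy of Proposition~\ref{step3prop} as you suggest.

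Step~(i) of your proposal is also underspecified in a way that matters. The weight-$(-2)$ piece of $\overline{H}^{1,\text{pro-}\ell}$ only sees the images of inertia in abelianizations; ``varying $H$ and taking a limit'' does not by itself characterise which cyclic subgroups of $\overline{\Pi}_{U_1}^{m,p'}$ are inertia groups. The paper's actual mechanism is the notion of a maximal cyclic subgroup of cyclotomic type (Definition~\ref{defcmct}), combined with the centraliser computation $Z_{\mathcal{F}^m}(x^n)=\overline{\langle x\rangle}$ for a free pro-$\Sigma$ group (Lemma~\ref{inertiaprp1}) and a compactness argument (Lemma~\ref{newlmm}); Lemma~\ref{1.4.2} then identifies these subgroups with lifts of inertia. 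The shift $m\mapsto m+2$ in Proposition~\ref{1.4.5} is what accounts for the bound $m\geq3$ in the theorem, since one needs $\Pi_{U}^{(3,p')}$ merely to recover the decomposition groups in $\Pi_{U}^{(1,p')}$; this is a sharper and more specific reason than your heuristic about needing ``a further commutator layer''. Finally, in characteristic~$p$ your step~(iii) elides the non-trivial gluing problem: one obtains Frobenius twist exponents $n_i$ separately for each $4$-element subset, and Proposition~\ref{puncturedp} (via \cite{Ya2020} Lemmas~2.3.5, 2.3.6) is needed to make these consistent across overlapping $4$-tuples before Galois descent can be applied.
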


In this section, we introduce the paper  \cite{Ya2020} and a sketch of the proof of Theorem \ref{thmnyamaguchi}.  In subsection 3.1,  we explain the group-theoretical reconstruction of inertia groups.  In subsection 3.2,  we  explain the outline of the proof  of  Theorem \ref{thmnyamaguchi}  in detail. In subsection 3.3, we discuss extensions of Theorem \ref{thmnyamaguchi}.

\subsection{The group-theoretical reconstruction of inertia groups}
For a scheme $S$, denote by $S^{\text{cl}}$ the set of all closed points of  $S$. We define $\tilde{X}^{m,\Sigma}$ as the  cover of $X$ corresponding to $\{1\}\subset \Pi^{(m,\Sigma)}_{U}$, and write  $\tilde{U}^{m,\Sigma}$ and  $\tilde{E}^{m,\Sigma}$ for the inverse image  of $U$ and $E$ by $\tilde{X}^{m,\Sigma}\twoheadrightarrow X$, respectively. Let $I_{\tilde{v},\overline{\Pi}^{m,\Sigma^{\dag}}_{U}}\ (\text{resp. }D_{\tilde{v},\Pi^{(m,\Sigma^{\dag})}_{U}})$ be the stabilizer of $\tilde{v}\in (\tilde{X}^{m,\Sigma^{\dag}})^{\text{cl}}$ in $\overline{\Pi}^{m,\Sigma^{\dag}}_{U}$ (resp. $\Pi^{(m,\Sigma^{\dag})}_{U}$) with respect to the natural action $\overline{\Pi}^{m,\Sigma^{\dag}}_{U}\curvearrowright(\tilde{X}^{m,\Sigma^{\dag}})^{\text{cl}}\ (\text{resp. }\Pi^{(m,\Sigma^{\dag})}_{U}\curvearrowright(\tilde{X}^{m,\Sigma^{\dag}})^{\text{cl}}$). We call it the  inertia group (resp. the decomposition group) at $\tilde{v}$. For each $x\in E$, we set 
\[
I_{x,\overline{\Pi}^{m,\Sigma^{\dag}}_{U}}:=\overline{\langle I_{\tilde{v},\overline{\Pi}^{m,\Sigma^{\dag}}_{U}}\mid \tilde{v}\in (\tilde{X}^{m,\Sigma^{\dag}})^{\text{cl}}, \tilde{v}\text{ is above }x\rangle},
\]
and set $I_{\overline{\Pi}^{m,\Sigma^{\dag}}_{U}}:=\overline{\langle I_{\tilde{v},\overline{\Pi}^{m,\Sigma^{\dag}}_{U}}\mid \tilde{v}\in (\tilde{X}^{m,\Sigma^{\dag}})^{\text{cl}}\rangle}$.  In this subsection, we show the following proposition.

\begin{prp}[cf. \cite{Na1990-411} (3.5)Corollary, \cite{Ya2020} Corollary 1.4.8] \label{1.4.5}
Assume either ``$m\geq 2$ and  $r\geq2$" or ``$m=1$ and $r\geq 3$".  Let $\Phi_{m+2}^{\dag}:\Pi_{U_{1}}^{(m+2,\Sigma^{\dag})}\xrightarrow[G_{k}]{\sim}\Pi_{U_{2}}^{(m+2,\Sigma^{\dag})}$ be a  $G_{k}$-isomorphism, and  $\Phi_{m}^{\dag}:\Pi_{U_{1}}^{(m,\Sigma^{\dag})}\xrightarrow[G_{k}]{\sim}\Pi_{U_{2}}^{(m,\Sigma^{\dag})}$ the isomorphism induced  by $\Phi_{m+2}^{\dag}$. Then $\Phi_{m}^{\dag}$ preserves the decomposition groups at cusps. 
\end{prp}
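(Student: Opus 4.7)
The plan is to characterize the decomposition groups at cusps inside $\Pi_{U_i}^{(m,\Sigma^{\dag})}$ via a two-stage procedure, adapting the weight-theoretic approach of Nakamura reviewed in Step 1 of Section 1 to the $m$-step solvable setting, and then to verify that this characterization is preserved by the $G_k$-equivariant isomorphism induced by $\Phi_{m+2}^{\dag}$.

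First, I would reconstruct the inertia subgroups of the intermediate covers of $U_i$ group-theoretically from $\Pi_{U_i}^{(m+2,\Sigma^{\dag})}$. For each open subgroup $H^{(m+1)}$ of $\Pi_{U_i}^{(m+1,\Sigma^{\dag})}$ (arising as the image of an open subgroup at level $m+2$) whose image in $\Pi_{U_i}^{(m,\Sigma^{\dag})}$ is an open subgroup $H^{(m)}$, consider the corresponding \'{e}tale cover $V_i\to U_{i,k'}$, where $k'$ is the fixed field of $\text{pr}(H^{(m+1)})$. The abelianization of $\overline{H^{(m)}}$, viewed inside $\overline{H^{(m+1)}}$, fits into an exact sequence of $\text{pr}(H^{(m+1)})$-modules of the form~\eqref{wfseq}, and the inertia subgroups at the cusps of $V_i$ are characterized as the unique maximal pro-$\Sigma^{\dag}$ submodule of weight $-2$. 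This yields a purely group-theoretic description, since the weight filtration is determined by the $G_k$-module structure, which is preserved by $\Phi_{m+2}^{\dag}$.

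Second, I would take the union of the reconstructed inertia subgroups over a cofinal system of such $H^{(m+1)}$, and descend via the natural surjection $\overline{\Pi}_{U_i}^{m+1,\Sigma^{\dag}}\twoheadrightarrow \overline{\Pi}_{U_i}^{m,\Sigma^{\dag}}$ to recover the groups $I_{x,\overline{\Pi}_{U_i}^{m,\Sigma^{\dag}}}$ for each cusp $x\in E_i$. The decomposition group $D_{\tilde{v},\Pi_{U_i}^{(m,\Sigma^{\dag})}}$ is then recovered as the normalizer in $\Pi_{U_i}^{(m,\Sigma^{\dag})}$ of the inertia group $I_{\tilde{v},\overline{\Pi}_{U_i}^{m,\Sigma^{\dag}}}$ (for a compatibly chosen base point $\tilde{v}$). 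Both the weight-$(-2)$ characterization and the normalizer construction are manifestly group-theoretic and $G_k$-equivariantly natural, so the induced isomorphism $\Phi_m^{\dag}$ will match decomposition groups at cusps on both sides.

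The main obstacle is justifying the depth requirement ``$m+2$''. One increment ($m\to m+1$) is needed so that the abelianization of an arbitrary open subgroup of $\overline{\Pi}_{U_i}^{m,\Sigma^{\dag}}$ is visible; a second increment ($m+1\to m+2$) is required to ensure that a sufficiently rich cofinal family of Galois models of $H^{(m+1)}$ is available, so that the weight-$(-2)$ piece picked out really is the inertia subgroup and not a potentially larger subobject. The hypothesis $r\geq 2$ (resp.\ $r\geq 3$ when $m=1$) enters precisely here: it guarantees that the covers $V_i$ carry enough cusps for the weight filtration to separate the inertia orbits, and in the boundary case $m=1$ a third cusp is needed because the weight-$(-2)$ piece otherwise degenerates relative to the depth of the solvable filtration.
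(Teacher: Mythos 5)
Your plan differs from the paper's and, more importantly, contains a genuine gap at the crucial step. In the exact sequence~\eqref{wfseq}, the weight-$(-2)$ part of the abelianization of a cover is the image of $\mathbb{Z}[E(\overline{k})]\otimes\hat{\mathbb{Z}}^{\Sigma^{\dag}}(1)$, i.e.\ the \emph{span} of all the cuspidal inertia images, not the individual inertia subgroups. Picking out the weight-$(-2)$ submodule therefore does not tell you which cyclic subgroups of $\overline{\Pi}^{m,\Sigma^{\dag}}_{U_i}$ are actually inertia groups at cusps, nor does it explain how to lift from an abelianization of a cover back to a subgroup of $\overline{\Pi}^{m,\Sigma^{\dag}}_{U_i}$. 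The paper handles exactly this issue differently: it introduces the ``maximal cyclic subgroups of cyclotomic type'' in $\overline{\Pi}^{m+2,\Sigma^{\dag}}_{U}$ (Definition~\ref{defcmct}, with the key torsion-free-quotient and cyclotomic-character conditions) and proves in Lemma~\ref{1.4.2}, via a compactness argument (Lemma~\ref{newlmm}) combined with the identity $I_{H^{1}}=W_{-2}(H^{1})$, that such a subgroup projects precisely onto a cuspidal inertia group in $\overline{\Pi}^{m,\Sigma^{\dag}}_{U}$. Your suggestion that ``a sufficiently rich cofinal family of Galois models'' will rule out the larger subobject is not substantiated, and it is not clear how a limit over covers alone could accomplish this.

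There are two further points where your proposal understates what must be proved. First, you assert that the decomposition group is the normalizer of the inertia group as if it were obvious; in fact $D_{\tilde v}=N(I_{\tilde v})$ rests on the non-commensurability of distinct inertia groups, which the paper obtains from the centralizer computation $Z_{\mathcal{F}^{m}}(x^{n})=\overline{\langle x\rangle}$ for a free pro-$\Sigma$ group with $m\ge2$ (Lemma~\ref{inertiaprp1}) and the freeness of $\overline{\Pi}^{\Sigma^{\dag}}_{U}$ when $r\ge2$. This is the true source of the numerology ``$m\ge2$, $r\ge2$,'' not, as you write, some degeneration of the weight filtration. Second, you do not address the case $m=1$ at all; there $D_{y}\subsetneq N(I_{y})$ in general, and the paper has to switch to the maximal nilpotent quotient of $\Pi_{U}^{(\Sigma^{\dag})}$ to conclude (this is where $r\ge3$ enters). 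As it stands your proof neither identifies individual inertia groups group-theoretically nor justifies the normalizer-equals-decomposition step, so the argument does not go through.
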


We divide the proof  of Proposition \ref{1.4.5} into the following  two steps. 

\begin{itemize}
\item[Step 1:] 
Let  $\mathcal{F}$ be a free pro-$\Sigma$ group,   $X \subset  \mathcal{F}$  a set of free generators of $\mathcal{F}$, and $x\in X$.  For each element $f\in\mathcal{F}$, we wirte $Z_{\mathcal{F}^{m}}(f)$  for  the centralizer of $f$ in $\mathcal{F}^{m}$. First, we show that   $Z_{\mathcal{F}^{m}}(x^{n}) = \overline{\langle x\rangle}$ if $m \geq  2$ and $n\in\mathbb{Z}-\{0\}$ (see Lemma \ref{inertiaprp1}).  By using this result,   we get $D_{y,\Pi_{U}^{(m,\Sigma^{\dag})}}=N_{\Pi_{U}^{(m,\Sigma^{\dag})}}(I_{y,\overline{\Pi}_{U}^{m,\Sigma^{\dag}}})$  for all $ y\in \tilde{E}^{m,\Sigma^{\dag}}_{U}$.

\item[Step 2:]  We consider the group-theoretical reconstruction of inertia groups of $\overline{\Pi}_{U}^{m,\Sigma^{\dag}}$ from $\Pi_{U}^{(m+2,\Sigma^{\dag})}$.  For this,  we use the maximal cyclic subgroups of cyclotomic type (see Definition \ref{defcmct}). Since  $D_{y,\Pi_{U}^{(m,\Sigma^{\dag})}}=N_{\Pi_{U}^{(m,\Sigma^{\dag})}}(I_{y,\overline{\Pi}_{U}^{m,\Sigma^{\dag}}})$ ($ y\in \tilde{E}^{m,\Sigma^{\dag}}_{U}$), we obtain the  group-theoretical reconstruction of decomposition  groups at cusps of $\Pi_{U}^{(m,\Sigma^{\dag})}$ from $\Pi_{U}^{(m+2,\Sigma^{\dag})}$.
\end{itemize}

\noindent\underline{\textbf{Step 1}}\par
Let  $\mathcal{F}$ be a free pro-$\Sigma$ group and   $X \subset  \mathcal{F}$  a set of free generators of $\mathcal{F}$. We write   $Z_{\mathcal{F}^{m}}(y) $  for  the centralizer of $y$ in $\mathcal{F}^{m}$  ($y\in \mathcal{F}$). 

\begin{lmm}[cf. \cite{Ya2020} Proposition 1.1.10]\label{inertiaprp1}
Let $n \in \mathbb{Z}-\{0\}$ and $x\in X$.  If $m \geq  2$,   then  $Z_{\mathcal{F}^{m}}(x^{n}) = \overline{\langle x\rangle}$. In particular,   $\mathcal{F}^{m}$ is either  abelian or center-free.
\end{lmm}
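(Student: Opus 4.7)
The containment $\overline{\langle x \rangle} \subseteq Z_{\mathcal{F}^m}(x^n)$ is immediate, so all the work is in the reverse inclusion. The case $|X|=1$ makes $\mathcal{F}^m = \mathcal{F}$ procyclic and the statement trivial, so I may assume $|X| \geq 2$. My plan is induction on $m \geq 2$: the base case $m=2$ is handled by a direct pro-$\Sigma$ Magnus embedding computation, and the inductive step reduces to an injectivity statement on the abelian kernel of $\mathcal{F}^m \twoheadrightarrow \mathcal{F}^{m-1}$.

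For the base case, extend $x$ to a free generating set and realize $\mathcal{F}^2$ inside the semidirect product $\bigoplus_{X} \Lambda \rtimes \mathcal{F}^{\text{ab}}$ via the pro-$\Sigma$ Magnus embedding, where $\Lambda$ denotes the completed group algebra of $\mathcal{F}^{\text{ab}}$. Writing a general element as $(t, a)$ with $a \in \mathcal{F}^{\text{ab}}$ and $t \in \bigoplus_X \Lambda$, and observing that $x \mapsto (e_x, x)$ and therefore $x^n \mapsto ((1 + x + \cdots + x^{n-1})\, e_x,\ x^n)$, the commutation condition $[f, x^n] = 1$ unwinds to
\[
(x^n - 1)\cdot t \;=\; (a - 1)\cdot (1 + x + \cdots + x^{n-1})\cdot e_x
\]
in $\bigoplus_X \Lambda$. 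Since $\mathcal{F}^{\text{ab}}$ is free pro-$\Sigma$ abelian with $x$ extending to a basis, $x^n - 1$ is a non-zero-divisor in $\Lambda$; the $e_y$-component for $y \neq x$ gives $t_y = 0$, while the $e_x$-component yields $t_x = (a-1)/(x-1)$. For $t_x$ to lie in $\Lambda$, the image of $a-1$ in $\Lambda/(x-1)\Lambda$ must vanish, which forces $a \in \overline{\langle x \rangle}$; the coordinate $t$ then agrees with the Magnus coordinate of $x^a$, so $f \in \overline{\langle x \rangle}$.

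For the inductive step $m \geq 3$, consider the abelian-kernel extension
\[
1 \longrightarrow M_m \longrightarrow \mathcal{F}^m \longrightarrow \mathcal{F}^{m-1} \longrightarrow 1, \qquad M_m := \mathcal{F}^{[m-1]}/\mathcal{F}^{[m]},
\]
so that $M_m$ is a continuous module over the completed group algebra of $\mathcal{F}^{m-1}$ via conjugation. Given $f \in Z_{\mathcal{F}^m}(x^n)$, its image in $\mathcal{F}^{m-1}$ lies in $Z_{\mathcal{F}^{m-1}}(x^n) = \overline{\langle x \rangle}$ by the inductive hypothesis, so $f = x^a \cdot v$ with $v \in M_m$ and the relation $[f,x^n]=1$ becomes $(x^n - 1) \cdot v = 0$ in $M_m$. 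The remaining point, which is the main obstacle of the argument, is that $x^n - 1$ acts injectively on $M_m$: the cleanest route is a further Magnus-style construction realizing $M_m$ as a submodule of a free module over the completed group algebra of $\mathcal{F}^{m-1}$, on which the conjugation action of $x^n - 1$ is injective by an iterated version of the $m=2$ power-series argument. Granted injectivity, $v=0$ and $f \in \overline{\langle x \rangle}$.

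The ``in particular'' assertion then follows immediately: for $|X| \geq 2$ the center of $\mathcal{F}^m$ is contained in $Z_{\mathcal{F}^m}(x) \cap Z_{\mathcal{F}^m}(y) = \overline{\langle x \rangle} \cap \overline{\langle y \rangle}$ for any two distinct free generators $x, y$, and this intersection is trivial because it already projects to zero in $\mathcal{F}^{\text{ab}}$.
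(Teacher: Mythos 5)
The base case $m=2$ via the pro-$\Sigma$ Magnus embedding is correct and is essentially equivalent to what the paper does: the paper invokes its appendix's Blanchfield--Lyndon theory to show that ``$x^\alpha - 1$ is a non-zero-divisor in $\hat{\mathbb{Z}}^{\Sigma}[[\mathcal{F}^{1}]]$'' is equivalent to ``$Z_{\mathcal{F}^2}(x^\alpha)=\overline{\langle x\rangle}$'', and your explicit computation in $\bigoplus_X \Lambda \rtimes \mathcal{F}^{\mathrm{ab}}$ is precisely the content of that equivalence, so this part matches. (Two small cleanups: your statement ``for $t_x$ to lie in $\Lambda$...'' should be read the other way around --- you already know $t_x\in\Lambda$, which \emph{gives} $a-1\in(x-1)\Lambda$; and ``the Magnus coordinate of $x^a$'' should say $x^\beta$ where $a = x^\beta$, $\beta \in \hat{\mathbb{Z}}^{\Sigma}$.)

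The genuine gap is in the inductive step, and you flag it yourself. You reduce to showing that $x^n-1$ acts injectively on $M_m = \mathcal{F}^{[m-1]}/\mathcal{F}^{[m]}$ and then assert that this follows from embedding $M_m$ into a free $\hat{\mathbb{Z}}^{\Sigma}[[\mathcal{F}^{m-1}]]$-module ``by an iterated version of the $m=2$ power-series argument.'' That description does not yield a proof: for $m\geq 3$ the ring $\hat{\mathbb{Z}}^{\Sigma}[[\mathcal{F}^{m-1}]]$ is noncommutative, and the commutative power-series computation you ran in the base case does not iterate to it. What one actually needs, after embedding $M_m$ in $\hat{\mathbb{Z}}^{\Sigma}[[\mathcal{F}^{m-1}]]^{|X|}$ via the relation-module (Magnus) sequence for $\mathcal{F}^{[m-1]}\lhd\mathcal{F}$, is that $x^n-1$ is a non-zero-divisor in $\hat{\mathbb{Z}}^{\Sigma}[[\mathcal{F}^{m-1}]]$; the clean argument for that is not an iteration but the observation that $\hat{\mathbb{Z}}^{\Sigma}[[\mathcal{F}^{m-1}]]$ is a free module over the commutative procyclic subalgebra $\hat{\mathbb{Z}}^{\Sigma}[[\overline{\langle x\rangle}]]$ (a coset-decomposition fact for completed group algebras), in which $x^n-1$ is a non-zero-divisor by the one-variable power-series computation. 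Without supplying that, the inductive step is incomplete. For comparison, the paper's sketch also leaves this step terse (``the assertion follows by the induction on $m$''), but it explicitly leans on its Proposition 1.1.6, which gives the containment $Z_{\mathcal{F}^m}(x^\alpha)\subset\overline{\langle x\rangle}\cdot M_m$ for all $m$ as an external input rather than by the $(m-1)$-to-$m$ projection you use; your route is a legitimate alternative there.
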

\begin{proof}[Sketch of Proof]
Let $\alpha\in \hat{\mathbb{Z}}^{\Sigma}-\left\{0\right\}$.  By \cite{Ya2020} Proposition 1.1.6,  we obtain  that $Z_{\mathcal{F}^m}(x^\alpha) \ \subset\  \overline{\langle x\rangle}\cdot\mathcal{F}^{[m-1]}/\mathcal{F}^{[m]}$. (The proof of  \cite{Ya2020} Proposition 1.1.6 is essentially the  same as \cite{Na1994} Lemma 2.1.2.)  For simplicity, we only consider the case of  $|X|<\infty$.  
A  calculation shows that     $\hat{\mathbb{Z}}^{\Sigma}[[\mathcal{F}^{1}]]\ni x^n-1$ is a non-zero-divisor   (\cite{Ya2020} Lemma 1.1.7(1)). 
In \cite{Ya2020} Appendix,  we establish the  Blanchfield-Lyndon theory for $\mathcal{F}$. By using this theory,  we obtain that $\hat{\mathbb{Z}}^{\Sigma}[[\mathcal{F}^{1}]]\ni x^\alpha-1$ is a non-zero-divisor if and only if $Z_{\mathcal{F}^2}(x^\alpha)=\overline{\langle x\rangle}$.  Hence we get  $Z_{\mathcal{F}^2}(x^\alpha)=\overline{\langle x\rangle}$. Finally,  the assertion follows by the induction on $m$.
\end{proof}
Let  $ y, y'\in \tilde{E}^{m,\Sigma^{\dag}}$ with $y\neq y'$, and assume that  $r\geq 2$ and $m\geq 2$.   Since $\overline{\Pi}^{\Sigma^{\dag}}_{U}$ is a free pro-$\Sigma^{\dag}$ group and each  inertia group of $\overline{\Pi}^{\Sigma^{\dag}}_{U}$ is  generated by  a free generator of  $\overline{\Pi}^{\Sigma^{\dag}}_{U}$, we obtain  that  $I_{y,\overline{\Pi}^m}$ and $I_{y',\overline{\Pi}^m}$ are not commensurable   by Lemma \ref{inertiaprp1} if $y|_{X_{\overline{k}}}= y'|_{X_{\overline{k}}}$. When $y|_{X_{\overline{k}}}\neq  y'|_{X_{\overline{k}}}$, we also have that  $I_{y,\overline{\Pi}^m}$ and $I_{y',\overline{\Pi}^m}$ are not commensurable   by  \cite{Ya2020} Lemma 1.2.1. In particular, $I_{y,\overline{\Pi}_{U}^{m,\Sigma^{\dag}}}=N_{\overline{\Pi}_{U}^{m,\Sigma^{\dag}}}(I_{y,\overline{\Pi}_{U}^{m,\Sigma^{\dag}}})$ and   $D_{y,\Pi_{U}^{(m,\Sigma^{\dag})}}=N_{\Pi_{U}^{(m,\Sigma^{\dag})}}(I_{y,
\overline{\Pi}_{U}^{m,\Sigma^{\dag}}})$  hold (\cite{Ya2020} Proposition 1.2.3).
\ \\

\noindent\underline{\textbf{Step 2}}\par
We consider the group-theoretical reconstruction of inertia groups of $\overline{\Pi}_{U}^{m,\Sigma^{\dag}}$ from $\Pi_{U}^{(m+2,\Sigma^{\dag})}$.  For this,  we use the maximal cyclic subgroups of cyclotomic type.  They are   first defined in  \cite{Na1990-411} (3.3)Definition in the case of the full fundamental group. The following definition differs from that of \cite{Na1990-411} for the following two points; we weaken the self-normalizing property in \cite{Na1990-411}, and we generalize the definition from \textbf{NF}s to \textbf{FGF}s.

\begin{dfn}[cf. \cite{Na1990-411} (3.3)Definition, \cite{Ya2020} Definition 1.4.3]\label{defcmct}
Let $J$ be  a closed subgroup of $\overline{\Pi}^{m,\Sigma^{\dag}}_{U}$. If $J$ satisfies the following conditions, then $J$ is called a  maximal cyclic subgroup of cyclotomic type.
\begin{enumerate}[(i)]
\item $J \cong \hat{\mathbb{Z}}^{\Sigma^{\dag}}$
\item Write  $\overline{J}$ for the image   of  $J$ by $\overline{\Pi}_{U}^{m,\Sigma^{\dag}}\rightarrow \overline{\Pi}_{U}^{1,\Sigma^{\dag}}$. Then $J\overset{\sim}\rightarrow \overline{J}$ and  $\overline{\Pi}_{U}^{1,\Sigma^{\dag}}/\overline{J}$ is torsion-free.
\item $\text{pr}(N_{\Pi_{U}^{(m,\Sigma^{\dag})}}(J))\overset{\text{op}}\subset G_k$
\item Let $\chi_{\text{cycl}}:G_k\rightarrow (\hat{\mathbb{Z}}^{\Sigma^{\dag}})^{\times}$ be the cyclotomic character and $N_{\Pi_{U}^{(m,\Sigma^{\dag})}}(J)\to \text{Aut}(J)=(\hat{\mathbb{Z}}^{\Sigma^{\dag}})^{\times}$ the character obtained from the conjugate action. Then the  following diagram is commutative.
\[
\xymatrix@R=10pt{
N_{\Pi_{U}^{(m,\Sigma^{\dag})}}(J) \ar[r] \ar[d]& \text{Aut}(J)\ar@{}[d]|*=0[@]{\rotatebox{90}{=}}\\
G_k\ar[r]^{\chi_{\text{cycl}}}&(\hat{\mathbb{Z}}^{\Sigma^{\dag}})^{\times}\\
}
\]

\end{enumerate}
\end{dfn}
\begin{lmm}[cf. \cite{Ya2020} Lemma 1.4.1] \label{newlmm}
Let $Q$ be a finite group,   $G$ a profinite group, $J$ a closed subgroup of $G$, and  $\mathcal{I}$   a closed subset of  $G$.  If  there exists $\rho_{J}:J \twoheadrightarrow Q$ such that $\mathcal{I}\cap J\subset \text{ker}(\rho_{J})$, then there exist an open subgroup  $H\subset G$ with $J\subset H$ and a surjection $\rho_{H}:H \twoheadrightarrow Q$ such that $\mathcal{I}\cap H\subset \text{ker}(\rho_{H})$ and that $J\hookrightarrow H\overset{\rho_{H}}\twoheadrightarrow Q$ coincides with $\rho_{J}$. 
\end{lmm}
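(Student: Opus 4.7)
The strategy is, in two stages, to first extend $\rho_J$ to a convenient open subgroup $H'$ of $G$ containing $J$, and then to shrink $H'$ to a smaller open subgroup $H \supset J$ on which the extension avoids the ``bad'' part of $\mathcal{I}$.

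Set $K:=\ker(\rho_{J})$, which is an open normal subgroup of $J$ since $Q$ is finite. A standard fact about profinite groups (if $J$ is closed in a profinite group $G$, then every open subgroup of $J$ contains $V\cap J$ for some open normal subgroup $V$ of $G$) produces an open normal subgroup $M\lhd G$ with $M\cap J\subset K$. Put $H':=MJ$. Since $M$ is open and $M\lhd G$, the set $H'$ is an open subgroup of $G$ that contains $J$. Define $\rho_{H'}\colon H'\to Q$ by $\rho_{H'}(mj):=\rho_{J}(j)$ for $m\in M$, $j\in J$. Well-definedness is immediate from $M\cap J\subset K$, and a routine manipulation using normality of $M$ in $G$ shows $\rho_{H'}$ is a homomorphism; it is surjective since $\rho_{J}$ is, and by construction $\rho_{H'}|_{J}=\rho_{J}$. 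One checks that $\ker(\rho_{H'})=MK$.

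For the shrinking step, consider
\[
\mathcal{I}_{0}\;:=\;(\mathcal{I}\cap H')\setminus MK.
\]
Since $\mathcal{I}$ is closed in $G$, $H'$ is clopen, and $MK=\ker(\rho_{H'})$ is open in $H'$, the set $\mathcal{I}_{0}$ is closed in $G$. Moreover $J\cap\mathcal{I}_{0}=\emptyset$: if $j\in J\cap\mathcal{I}$, then by hypothesis $j\in K\subset MK$. The task is to find an open subgroup $H$ with $J\subset H\subset H'$ and $H\cap\mathcal{I}_{0}=\emptyset$. For each $j\in J$, $j$ is not in the closed set $\mathcal{I}_{0}$, so there is an open normal subgroup $N_{j}\lhd G$ with $N_{j}\subset M$ and $jN_{j}\cap\mathcal{I}_{0}=\emptyset$. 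By compactness of $J$, finitely many translates $j_{1}N_{j_{1}},\dots,j_{s}N_{j_{s}}$ cover $J$; set $M':=\bigcap_{i}N_{j_{i}}$, an open normal subgroup of $G$ with $M'\subset M$. Then $H:=JM'$ is an open subgroup of $G$ containing $J$, contained in $H'$, and disjoint from $\mathcal{I}_{0}$ (because $JM'\subset\bigcup_{i}j_{i}N_{j_{i}}$ by the cover, and each $j_{i}N_{j_{i}}$ misses $\mathcal{I}_{0}$).

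Finally define $\rho_{H}:=\rho_{H'}|_{H}$. It is surjective because $\rho_{H}(J)=\rho_{J}(J)=Q$, it restricts to $\rho_{J}$ on $J$ by construction, and if $x\in\mathcal{I}\cap H$ then $x\in\mathcal{I}\cap H'$ and $x\notin\mathcal{I}_{0}$, forcing $x\in MK=\ker(\rho_{H'})$, i.e.\ $x\in\ker(\rho_{H})$.

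The only delicate step is the shrinking in the second stage: one has to ensure simultaneously that $H$ is an open \emph{subgroup} (not merely an open neighborhood of $J$) and that $H$ avoids all of $\mathcal{I}_{0}$. The finite-cover trick above, which uses compactness of $J$ together with normality of the $N_{j}$'s, is the key technical point; everything else is formal.
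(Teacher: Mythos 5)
Your proof is correct, but it takes a genuinely different route from the paper's, which is shorter and more abstract. The paper cites Ribes--Zalesskii (Lemma 1.1.16(b)) as a black box to obtain the open subgroup $H_0\supset J$ and the extension $\rho_{H_0}$, and then applies the finite-intersection-property form of compactness to the directed family of closed sets $\bigl\{(\mathcal{I}\cap\tilde H)\setminus\ker(\rho_{\tilde H})\bigr\}_{\tilde H}$, where $\tilde H$ ranges over open subgroups with $J\subset\tilde H\subset H_0$; since this intersection equals $(\mathcal{I}\cap J)\setminus\ker(\rho_J)=\emptyset$ (using $\bigcap_{\tilde H}\tilde H=J$ because $J$ is closed), one member of the family is already empty, and that $\tilde H$ works. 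You instead build the extension by hand — picking an open normal $M\lhd G$ with $M\cap J\subset\ker(\rho_J)$ and defining $\rho_{H'}$ on $H'=MJ$ by $mj\mapsto\rho_J(j)$ — and then run a cover-and-intersect compactness argument directly on $J$ against the closed obstruction set $\mathcal{I}_0=(\mathcal{I}\cap H')\setminus MK$. Both compactness arguments are sound; the paper's is slicker because it avoids constructing $\rho_{H'}$ explicitly and pushes all the work into the FIP over the subgroup lattice, while yours is more self-contained and makes the extension and the shrinking step concrete, at the cost of verifying by hand that $\rho_{H'}$ is well-defined and a homomorphism.
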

\begin{proof}[Proof]  By \cite{Za} Lemma 1.1.16(b), there exists an open subgroup $H_{0}\subset G$ with $J\subset H_{0}$ and a  morphism $\rho_{H_{0}}:H_{0}\rightarrow Q$ such that   the morphism $J\hookrightarrow H_{0}\overset{\rho_{H_{0}}} \rightarrow Q$ coincides with $\rho_{J}$. Let $\mathcal{H}:=\{ H\overset{\text{op}}\subset G\mid J\subset H\subset H_{0}\}$, and $\rho_{\tilde{H}}:\tilde{H}\hookrightarrow H_{0}\overset{\rho_{H_{0}}} \rightarrow Q$ for $\tilde{H}\in\mathcal{H}$. Since $(\mathcal{I}\cap \tilde{H})-\text{Ker}(\rho_{\tilde{H}})$ is closed in $G$ and $\underset{\tilde{H}\in\mathcal{H}}\cap ((\mathcal{I}\cap \tilde{H})-\text{Ker}(\rho_{\tilde{H}}))=(\mathcal{I}\cap F)-\text{Ker}(\rho_{F})=\emptyset$, there exists   $H\in\mathcal{H}$ such that  $\mathcal{I}\cap H\subset \text{ker}(\rho_{H})$ by the compactness argument.
\end{proof}
\begin{lmm}[cf. \cite{Na1990-411} (3.4)Theorem, \cite{Ya2020} Proposition 1.4.5] \label{1.4.2}
Assume that   $r\geq 2$. Then,  for any subgroup $I$ of  $\overline{\Pi}^{m,\Sigma^{\dag}}_{U}$, the following conditions are equivalent.
\begin{enumerate}[(a)]
\item $I$ is the inertia group at a point in $\tilde{E}^{m,\Sigma^{\dag}}$.
\item There exists a maximal cyclic subgroup of cyclotomic type $J$ of $\overline{\Pi}^{m+2,\Sigma^{\dag}}_{U}$ whose image by $\overline{\Pi}^{m+2,\Sigma^{\dag}}_{U}\rightarrow \overline{\Pi}^{m,\Sigma^{\dag}}_{U}$  coincides with $I$.
\end{enumerate}
\end{lmm}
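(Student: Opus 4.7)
The plan is to prove (a)$\Leftrightarrow$(b) by handling the two directions separately; the nontrivial content sits in (b)$\Rightarrow$(a), where one must translate the abstract ``maximal cyclic of cyclotomic type'' conditions back into a geometric inertia group.

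For (a)$\Rightarrow$(b), I would simply lift: given $I = I_{\tilde{v},\overline{\Pi}^{m,\Sigma^{\dag}}_{U}}$ for some $\tilde{v}$ above a cusp, pick any $\tilde{w} \in (\tilde{X}^{m+2,\Sigma^{\dag}})^{\text{cl}}$ over $\tilde{v}$ and set $J := I_{\tilde{w},\overline{\Pi}^{m+2,\Sigma^{\dag}}_{U}}$. The image of $J$ under $\overline{\Pi}^{m+2,\Sigma^{\dag}}_{U} \twoheadrightarrow \overline{\Pi}^{m,\Sigma^{\dag}}_{U}$ is $I$ by functoriality of inertia. Conditions (i)--(iv) of Definition~\ref{defcmct} are then checked as follows: (i) is the standard structure of tame inertia at a cusp; (ii) follows from the weight filtration exact sequence~(\ref{wfseq}), which identifies $\overline{J}$ with (the image of) a single basis vector of $\mathbb{Z}[E(\overline{k})]\otimes \hat{\mathbb{Z}}^{\Sigma^{\dag}}(1)$, giving both injectivity $J\xrightarrow{\sim}\overline{J}$ and torsion-freeness of the cokernel; (iii) uses the decomposition group $D_{\tilde{w},\Pi^{(m+2,\Sigma^{\dag})}_{U}}$, which normalizes $J$ and surjects onto an open subgroup of $G_{k}$; (iv) is the classical cyclotomic action on tame inertia.

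For (b)$\Rightarrow$(a), let $J\subset\overline{\Pi}^{m+2,\Sigma^{\dag}}_{U}$ satisfy the MCSCT axioms and let $I$ denote its image in $\overline{\Pi}^{m,\Sigma^{\dag}}_{U}$. The first stage is a weight reduction: conditions (iii) and (iv) together say that an open subgroup of $G_{k}$ acts on the abelianization $\overline{J}\subset\overline{\Pi}^{1,\Sigma^{\dag}}_{U}$ via the cyclotomic character, i.e.\ with weight $-2$. Using (\ref{wfseq}) and the fact that $T_{\Sigma^{\dag}}(J_{X})$ has weight $-1$, this forces $\overline{J}$ to lie in the image of $\mathbb{Z}[E(\overline{k})]\otimes \hat{\mathbb{Z}}^{\Sigma^{\dag}}(1)$, i.e.\ the ``cuspidal'' summand. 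Condition (ii) further makes $\overline{J}$ a saturated $\hat{\mathbb{Z}}^{\Sigma^{\dag}}$-line there. The second stage is to identify $\overline{J}$ with a genuine inertia generator rather than a nontrivial $G_{k}$-equivariant combination supported on a Galois orbit of cusps. Here one invokes Lemma~\ref{newlmm}: applied to suitable finite cyclic quotients $J\twoheadrightarrow\mathbb{Z}/\ell^{n}\mathbb{Z}$ together with closed sets chosen to separate $J$ from would-be ``other-cusp'' inertia, it extends these quotients to open subgroups of $\overline{\Pi}^{m+2,\Sigma^{\dag}}_{U}$. The resulting tower of tame solvable covers of $U$ has ramification concentrated at a single point of $E$, pinning $J$ down to the inertia at a point of $\tilde{E}^{m+2,\Sigma^{\dag}}$; then $I$ is the inertia at its image in $\tilde{E}^{m,\Sigma^{\dag}}$.

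The main obstacle is the second stage of (b)$\Rightarrow$(a): excluding composite cyclotomic lines supported on Galois-conjugate cusps. The abelianization alone cannot distinguish these, so the asymmetry $m+2$ vs $m$ must be genuinely used, and Lemma~\ref{newlmm} is the technical bridge realizing abstract cyclotomic/normalizer data as concrete cover-theoretic ramification. The numerical hypothesis ``$m\geq 2$, $r\geq 2$'' (or ``$m=1$, $r\geq 3$'') enters via Lemma~\ref{inertiaprp1} and the commensurability consequences of Step~1, which guarantee that distinct cusps give incommensurable inertia in $\overline{\Pi}^{m,\Sigma^{\dag}}_{U}$ and hence correspond to well-separated primitive lines at every derived level.
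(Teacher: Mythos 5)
Your direction (a)$\Rightarrow$(b) matches the expected lift-and-check argument. In (b)$\Rightarrow$(a), however, the second stage is where your proposal diverges from the paper and where the reasoning as you've sketched it has a gap. You describe applying Lemma~\ref{newlmm} with closed sets ``chosen to separate $J$ from would-be other-cusp inertia'' and producing a tower of covers ramified at a single cusp. That is not what the paper does, and it is not clear that plan closes: a priori $J$ could have trivial intersection (mod derived) with every inertia group, and no choice of separating set in Lemma~\ref{newlmm} will conjure a relation between $J$ and the inertia locus out of nothing. The missing step is the crucial positive statement that for every open $H \supset J\cdot\Delta$ of $(\overline{\Pi}^{\Sigma^{\dag}}_U)^{m+2}$ one has $J \subset H^{[1]}\cdot\langle\mathcal{I}\cap H\rangle$, where $\mathcal{I}$ is the union of all inertia groups and $\Delta = (\overline{\Pi}^{\Sigma^{\dag}}_U)^{[m+1]}/(\overline{\Pi}^{\Sigma^{\dag}}_U)^{[m+2]}$. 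This follows from conditions (iii),(iv) of Definition~\ref{defcmct} applied at the level of \emph{each} such $H$, together with the identification $I_{H^{1}} = W_{-2}(H^{1})$. Your ``first stage'' only invokes the cyclotomic weight constraint on $\overline{J}$ in the abelianization of the full group $\overline{\Pi}^{1,\Sigma^{\dag}}_U$; the paper needs it uniformly over all intermediate covers.

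The paper then uses Lemma~\ref{newlmm} by contradiction, not by construction: suppose $\mathcal{I}\cap(J\cdot\Delta) \subset J^{\ell}\cdot\Delta$. Then Lemma~\ref{newlmm} (with $\mathcal{I}$ the full inertia locus) produces an open $H \supset J\cdot\Delta$ and $\rho_H:H\twoheadrightarrow\mathbb{Z}/\ell\mathbb{Z}$ with $\rho_H(\mathcal{I}\cap H)=\{1\}$ and $\rho_H(J)\neq\{1\}$. But $J\subset H^{[1]}\cdot\langle\mathcal{I}\cap H\rangle \subset \ker\rho_H$, a contradiction. Hence $\mathcal{I}\cap(J\cdot\Delta)\not\subset J^{\ell}\cdot\Delta$; from this and a structure lemma (\cite{Ya2020} Lemma 1.4.2) one gets $J\subset\mathcal{I}\cdot((\overline{\Pi}^{\Sigma^{\dag}}_U)^{[m]}/(\overline{\Pi}^{\Sigma^{\dag}}_U)^{[m+2]})$, and only then do conditions (i),(ii) pin the image of $J$ to a single inertia group in $\overline{\Pi}^{m,\Sigma^{\dag}}_U$. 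So: your ingredients (weight filtration, Lemma~\ref{newlmm}, the $m{+}2$ versus $m$ drop, Step~1 incommensurability) are right, but the application of Lemma~\ref{newlmm} should be a contradiction driving $J$ \emph{into} the inertia locus, rather than a separation driving other inertia \emph{away} from $J$, and it needs the per-$H$ containment $J\subset H^{[1]}\cdot\langle\mathcal{I}\cap H\rangle$ to run.
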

\begin{proof}[Sketch of Proof] For simplicity, we set $\Delta:=(\overline{\Pi}_{U}^{\Sigma^{\dag}})^{[m+1]}/(\overline{\Pi}_{U}^{\Sigma^{\dag}})^{[m+2]}$. We  set    $\mathcal{I}:=\underset{\tilde{y}\in\tilde{X}^{m+2,\Sigma^{\dag}}}\cup I_{\tilde{y}, \overline{\Pi}_{U}^{m+2,\Sigma^{\dag}}}.$ For all  open subgroups $H$ of $(\overline{\Pi}_{U}^{\Sigma^{\dag}})^{m+2}$ containing $J\cdot \Delta$,  we obtain that $J\subset H^{[1]}\cdot  \langle \mathcal{I}\cap H\rangle$  by Definition \ref{defcmct}(iii)(iv) and $I_{H^{1}}=W_{-2}(H^{1})(:=\prod_{\ell\in \Sigma^{\dag}}W_{-2}(H^{1,\text{pro-}\ell}))$.  \par
We fix a prime $\ell$ with $J^{\ell} \subsetneq J$. Suppose that  $\mathcal{I}\cap (J\cdot \Delta)\subset J^{\ell}\cdot \Delta$.  Then there exists an  open subgroup $H$ of $(\overline{\Pi}_{U}^{\Sigma^{\dag}})^{m+2}$ containing $J\cdot \Delta$ and  $\rho_{H}: H \twoheadrightarrow \mathbb{Z}/\ell\mathbb{Z}$ such that $\rho_{H}(\mathcal{I} \cap H)=\{1\}$ and $\rho_{H}(J)\neq \{1\}$ by Lemma \ref{newlmm}.  This is absurd. Thus, we get $\mathcal{I}\cap (J\cdot \Delta)\not\subset  J^{\ell}\cdot \Delta$, and hence $\mathcal{I}\cap (J\cdot \Delta)\neq \{1\}$. Therefore, we obtain that $J\subset \mathcal{I}\cdot ((\overline{\Pi}_{U}^{\Sigma^{\dag}})^{[m]}/(\overline{\Pi}_{U}^{\Sigma^{\dag}})^{[m+2]}$ by \cite{Ya2020} Lemma 1.4.2.\par
Let $z\in J$ be a generator, and $\overline{z}$ the image of $z$ by $\overline{\Pi}_{U}^{m+2,\Sigma^{\dag}}\twoheadrightarrow \overline{\Pi}_{U}^{m,\Sigma^{\dag}}$. Since  there exists an inertia group $\tilde{I}$ that contains $\overline{z}$,  we have $\langle \overline{z}\rangle\equiv \tilde{I}$ mod  $(\overline{\Pi}^{\Sigma^{\dag}}_{U})^{[1]}/(\overline{\Pi}_{U}^{\Sigma^{\dag}})^{[m]}$ by Definition \ref{defcmct}(i)(ii). Since $\tilde{I}$ mapped injectively  by $\overline{\Pi}^{m,\Sigma^{\dag}}_{U}\twoheadrightarrow \overline{\Pi}^{1,\Sigma^{\dag}}_{U}$, we obtain $\langle \overline{z}\rangle=\tilde{I}$. \par
\end{proof}

\begin{proof}[Sketch of Proof of Proposition \ref{1.4.5}]
When $m\geq 2$ and  $r\geq2$, the assertion follows from Lemma \ref {1.4.2} and $D_{\tilde{y},\Pi_{U}^{(m,\Sigma^{\dag})}}=N_{\Pi_{U}^{(m,\Sigma^{\dag})}}(I_{\tilde{y},\overline{\Pi}_{U}^{m,\Sigma^{\dag}}})$ for $\tilde{y}\in \tilde{E}^{m,\Sigma^{\dag}}_{U}$. When $m=1$, the above method cannot  be used because $D_{y,\Pi_{U}^{(m,\Sigma^{\dag})}}\subsetneq N_{\Pi_{U}^{(m,\Sigma^{\dag})}}(I_{y,\overline{\Pi}_{U}^{m,\Sigma^{\dag}}})$ in general. In this case, we need to think about the maximal nilpotent quotient of $\Pi_{U}^{(\Sigma^{\dag})}$. For this, see the proof of \cite{Ya2020} Proposition 1.4.6.
 \end{proof}

\subsection{The proof of the main theorem}

 The important difference between the proofs of Theorem \ref{thmnakamura} and Theorem \ref{thmnyamaguchi} is Proposition \ref{1.4.5}. In Theorem \ref{thmnakamura}, we do not reconstruct  group-theoretically inertia groups. Thus, we could only consider the case of $\mathbb{P}_{k}^{1}$ minus $4$ points, but we get the results for the case of  $m=2$. In Theorem \ref{thmnyamaguchi}, we have the group-theoretical  reconstruction of  inertia groups. Thus, we can consider the case of all genus $0$ curves, but we should assume that $m\geq 3$. \par
In this subsection, we show Theorem \ref{thmnyamaguchi}.   As in section $1$, we divide the proof into the following  three steps. 
\begin{itemize}
\item[Step 1:]  Let   $\Lambda$  be a  finite subset of  $k\cup \{\infty\}$ with  $|\Lambda|\geq 4$. Let $x_1,x_2,x_3,x_4$ be distinct elements of $\Lambda$ and $\varepsilon=\left\{x_1,x_2\right\},\delta=\left\{x_3,x_4\right\}$. Set the following notation for $\varepsilon$ and $\delta$.
\[
\lambda  (\varepsilon, \delta):=\frac{ x_4-x_1 }{x_4-x_2} \frac{ x_3-x_2 }{x_3-x_1}
\]
(The isomorphism $\mathbb{P}^1_k\xrightarrow[k]{\sim} \mathbb{P}^1_k$ satisfying $x_1\mapsto 0,\ x_2\mapsto \infty,\ x_3\mapsto 1$ maps $x_4$  to $\lambda  (x_1,x_2,x_3,x_4)$.)    We will show that,  for all $n\in \mathbb{Z}_{\geq0}$ and  for all primes $\ell$ that differ from $p$, the field  $k(\mu_{\ell^{n}},\lambda(\varepsilon,\delta)^{\frac{1}{\ell^{n}}})$ is reconstructed group-theoretically from $\Pi^{(3)}_{\mathbb{P}_{k}^{1}-\Lambda}$   (see Proposition \ref{2.1.3}). This step corresponds to \cite{Ya2020} subsection 2.1.
\item[Step 2:]  
 Assume that $p=0$ (resp. $p>0$). For each  $i=1,2$, let   $\Gamma_{i}$ be a finitely generated subgroup of  $k^{\times}$.  In this step, we will show that,  if  $k(\Gamma_{1}^{\frac{1}{\ell^{n}}})=k(\Gamma_{2}^{\frac{1}{\ell^{n}}})$ for all $n\in \mathbb{Z}_{\geq0}$ and  for all primes $\ell$ that differ from $p$, then $\Gamma_{1}=\Gamma_{2}$ (resp.   $\underset{n\in\mathbb{Z}_{\geq0}}\cup \Gamma_{1}^{\frac{1}{p^{n}}}= \underset{n\in\mathbb{Z}_{\geq0}}\cup \Gamma_{2}^{\frac{1}{p^{n}}}$) holds (see Lemma \ref{nakamuralmm2}).   In particular, we get the  reconstruction of  $\lambda\in k^{\times}-\left\{1\right\}$  (resp. a non-torsion element $\lambda$ of $k^{\times}$) from  $\langle\lambda\rangle$ and $\langle1-\lambda\rangle$ in $k^{\times}$. This step corresponds to \cite{Ya2020} subsection 2.1.
\item[Step 3:] In this step, we first show Theorem  \ref{thmnyamaguchi} for punctured projective lines.  If $p=0$, then the result easily follows  from Step $1$ and  Step $2$. However, if $p>0$, then we have  to consider the gluing of the Frobenius twists.  Finally, we show Theorem \ref{thmnyamaguchi} in general by genus $0$ descent.  This step corresponds to \cite{Ya2020} subsections 2.2, 2.3 and 2.4. 
\end{itemize}

\noindent\underline{\textbf{Step 1}}\par
First, we define the weak rigidity invariant of  $\mathbb{P}^1_k-\Lambda$ (where $\Lambda$ stands for a finite set of $k$-rational points of $\mathbb{P}_k^1$ with $|\Lambda|\geq 4$). The weak rigidity invariant  was first defined in \cite{Na1990-411} (4.2) for the full fundamental group $\Pi_{\mathbb{P}^1_k-\Lambda}$ when $p=0$.  However, this definition is applicable to arbitrary $p\geq 0$ and  essentially uses only $\Pi_{\mathbb{P}^1_k-\Lambda}^{(1,p')}$. Thus,   in \cite{Ya2020} subsection 1.4,  the author redefined the weak rigidity invariant as follows.   
\begin{dfn}[cf. \cite{Na1990-411} (4.2), \cite{Ya2020} Definition 2.1.1]\label{2.1.1}
Let $n\in\mathbb{Z}_{\geq 0}$, $\ell$  a prime different from $p$,  and  $\Lambda$  a finite set of $k$-rational points of $\mathbb{P}_k^1$ with $|\Lambda|\geq 4$.  Let $x_1,x_2,x_3,x_4$ be distinct elements of $\Lambda$ and $\varepsilon=\left\{x_1,x_2\right\},\delta=\left\{x_3,x_4\right\}$.  We define  $\kappa_{\ell^{n}}(\varepsilon,\delta)$ to be the subfield of $k^{\text{sep}}$ consisting of  the elements fixed by all the automorphisms of $k^{\text{sep}}$ belonging to 
\begin{equation*}
\bigcup_{H}\bigcap_{\tilde{y}}(H\cap D_{\tilde{y}}).
\end{equation*}
Here $\tilde{y}$ run over all closed points of $\tilde{\Lambda}^{1,p'}$ above $\delta$, and  $H$ run over all open subgroups of $\Pi^{(1,p')}_{\mathbb{P}^1_k-\Lambda}$ satisfying the following conditions.
\begin{enumerate}[(i)]
\item  $\overline{H}:=H\cap \overline{\Pi}_{\mathbb{P}^1_k-\Lambda}^{1,p'}$ contains $I_{x, \overline{\Pi}_{\mathbb{P}^1_k-\Lambda}^{1,p'}}$ for all $x\in E-\varepsilon$, 
\item $\overline{\Pi}_{\mathbb{P}^1_k-\Lambda}^{1,p'}/\overline{H} \cong \mathbb{Z}/\ell^{n}\mathbb{Z}$
\item $\text{pr}(H)=G_{k(\mu_{\ell^{n}})}$
\item $\text{pr}^{-1}(G_{k(\mu_{\ell^{n}})})\rhd H$
\end{enumerate}
We call  $\kappa_{\ell^n}(\varepsilon,\delta)$ the $weak$ $rigidity$ $invariant$ for $\varepsilon, \delta$ of $U$.  
\end{dfn}
In \cite{Ya2020}, the weak rigidity invariant $\kappa_{N}(\varepsilon,\delta)$ is defined for all integers $N\in\mathbb{Z}_{\geq 1}$, whose all prime factors are contained in  $\Sigma^{\dag}$.  In the  following  proofs, we  only need  the weak rigidity invariant   in the case of $N=\ell^{n}$.
\begin{prp}[cf. \cite{Na1990-411}(4.3)Proposition, \cite{Ya2020} Proposition 2.1.2]\label{2.1.3} Under the notation of Definition \ref{2.1.1}, the following holds.
\begin{equation*}
\kappa_{\ell^n}(\varepsilon,\delta)\ =\ k(\mu_{\ell^n},\lambda(\varepsilon,\delta)^{\frac{1}{\ell^{n}}})
\end{equation*}
\end{prp}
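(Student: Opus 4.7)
The strategy is to identify each admissible subgroup $H$ from Definition \ref{2.1.1} with an explicit cyclic Kummer cover of $\mathbb{P}^1_{k(\mu_{\ell^n})}$ ramified only over $\varepsilon$, read off the residue fields at cusps above $\delta$ as Kummer extensions, and then collapse the resulting intersection of compositums using Kummer theory.

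First I would parametrize the admissible $H$. Conditions (iii) and (iv) say that $H$ arises from a Galois cover $U_H\to U_{k(\mu_{\ell^n})}$; condition (ii) forces its Galois group to be $\mathbb{Z}/\ell^n\mathbb{Z}$; and condition (i) forces the compactification $U_H^{\text{cpt}}\to \mathbb{P}^1_{k(\mu_{\ell^n})}$ to be unramified outside $\varepsilon=\{x_1,x_2\}$. Since $\mu_{\ell^n}\subset k(\mu_{\ell^n})$ and $\ell\neq p$, Kummer theory identifies such a cover with the normalization of $y^{\ell^n}=\epsilon_H\cdot (t-x_1)/(t-x_2)$ for a unique class $\epsilon_H\in k(\mu_{\ell^n})^\times/(k(\mu_{\ell^n})^\times)^{\ell^n}$. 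Conversely, every such class yields a valid $H$, because the right-hand side has a simple zero at $x_1$, so the cover is automatically geometrically irreducible and realises conditions (i)--(iv).

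Second, I would compute $\text{Fix}(H\cap D_{\tilde y})$ for $\tilde y$ lying above $x_i\in\delta$. Since $\overline{\Pi}_{\mathbb{P}^1_k-\Lambda}^{1,p'}$ acts trivially on $k^{\text{sep}}$, this fixed field coincides with the residue field of $U_H^{\text{cpt}}$ at the image of $\tilde y$. Substituting $t=x_i$ into the Kummer equation yields $k(\mu_{\ell^n})(\sqrt[\ell^n]{\epsilon_H\alpha_i})$, where $\alpha_i=(x_i-x_1)/(x_i-x_2)$. Taking the compositum over $\tilde y\mid\delta$ (equivalently, the fixed field of $\bigcap_{\tilde y\mid\delta}(H\cap D_{\tilde y})$) and using the identity $\alpha_4/\alpha_3=\lambda(\varepsilon,\delta)$ gives
\begin{equation*}
\text{Fix}\Bigl(\bigcap_{\tilde y\mid\delta}(H\cap D_{\tilde y})\Bigr)\ =\ k(\mu_{\ell^n})\bigl(\sqrt[\ell^n]{\epsilon_H\alpha_3},\ \sqrt[\ell^n]{\lambda(\varepsilon,\delta)}\bigr).
\end{equation*}

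Finally, I would intersect over $H$. Because $\epsilon_H$ ranges over all of $k(\mu_{\ell^n})^\times/(k(\mu_{\ell^n})^\times)^{\ell^n}$, so does $\epsilon_H\alpha_3$, and a direct Kummer-theoretic calculation (translating intersections of abelian $\ell^n$-extensions into intersections of the corresponding Kummer subgroups of $k(\mu_{\ell^n})^\times/(k(\mu_{\ell^n})^\times)^{\ell^n}$) gives
\begin{equation*}
\kappa_{\ell^n}(\varepsilon,\delta)\ =\ \bigcap_{H}k(\mu_{\ell^n})\bigl(\sqrt[\ell^n]{\epsilon_H\alpha_3},\ \sqrt[\ell^n]{\lambda(\varepsilon,\delta)}\bigr)\ =\ k(\mu_{\ell^n})\bigl(\sqrt[\ell^n]{\lambda(\varepsilon,\delta)}\bigr).
\end{equation*}
The main obstacle I anticipate is the careful verification that varying $H$ genuinely realises every class in $k(\mu_{\ell^n})^\times/(k(\mu_{\ell^n})^\times)^{\ell^n}$ (rather than only a proper subset), and the bookkeeping matching decomposition groups of $\tilde y\mid x_i$ to residue-field evaluations in possibly positive characteristic; this is essentially the content of the argument in \cite{Na1990-411}(4.3), here carried out at the level of $\Pi^{(1,p')}$ in the \textbf{FGF} setting.
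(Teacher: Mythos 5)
Your proposal is correct and follows essentially the same route as the paper's proof: parametrize admissible $H$ by Kummer classes (your $\epsilon_H$ is the paper's $\omega_H$ after a Möbius change of coordinate sending $x_1,x_2,x_3$ to $0,\infty,1$), read off residue fields at the points above $\delta$ as $\ell^n$-th-root adjunctions, observe that $\lambda(\varepsilon,\delta)^{1/\ell^n}$ always appears as a ratio so each $\kappa_H$ contains $k(\mu_{\ell^n},\lambda(\varepsilon,\delta)^{1/\ell^n})$, and exhibit the $H_0$ with trivial Kummer class to show the intersection equals this field. The paper does exactly this, just in the normalized coordinate $t$ where the cover is $k(\mu_{\ell^n},(\omega_H t)^{1/\ell^n})$ and the evaluation points are $t=1$ and $t=\lambda(\varepsilon,\delta)$.
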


\begin{proof}[Sketch of Proof]
Let $t:\mathbb{P}^1_k\xrightarrow[k]{\sim} \mathbb{P}^1_k$ be the isomorphism that satisfies $t(x_1)=0, t(x_2)=\infty$ and $t(x_3)=1$. (In particular,  $t(x_4)=\lambda(\varepsilon,\delta)$.)  Let  $H$ be an open subgroup of $\Pi^{(1,p')}_{\mathbb{P}^1_k-\Lambda}$ satisfying the conditions (i)(ii)(iii)(iv) in Definition \ref{2.1.1}. Then  $(U_{\overline{H}})^{\text{cpt}}\rightarrow X_{k^{\text{sep}}}$ is identified with $\mathbb{P}^1_{k^{\text{sep}}}\rightarrow\mathbb{P}^1_{k^{\text{sep}}}$, $x\mapsto x^{\ell^n}$, and the  corresponding extension of  function fields is $k^{\text{sep}}(t^{\frac{1}{\ell^{n}}})/k^{\text{sep}}(t)$. Let  $U_{H}\rightarrow U_{k(\mu_{\ell^{n}})}$ be the cover  corresponding to  $H \lhd p_{U/k}^{-1}(G_{k(\mu_{\ell^{n}})})$. Then there exists $\omega_H\in k(\mu_{\ell^{n}})^{\times}$ such that $K(U_{H})=k(\mu_{\ell^{n}},(\omega_{H}t)^{\frac{1}{\ell^{n}}})$ by the conditions (i)(ii)(iii)(iv) in Definition \ref{2.1.1} and Kummer theory.  Since the  fixed field $\kappa_H$ by  $\bigcap_{\tilde{y}} \text{pr}(H\cap D_{\tilde{y}})$  is the composite field of  residue fields of  all points of $X_{H}(:=(U_{H})^{\text{cpt}})$ above $x_3$ and $x_4$, we get $\kappa_H=k(\mu_{\ell^{n}},\left\{\omega_H\right\}^{\frac{1}{\ell^{n}}},\left\{\omega_H\lambda(\varepsilon,\delta)\right\}^{\frac{1}{\ell^{n}}})$.  There exists an  open subgroup $H_0$ of $\Pi^{(1,p')}_{\mathbb{P}^1_k-\Lambda}$ with  $\omega_{H_0}=1$ that satisfies the conditions (i)(ii)(iii)(iv) in Definition \ref{2.1.1}. Note that we have   $\kappa_H\supset\kappa_{H_0}$.  Hence, the assertion follows.
\end{proof}

\noindent\underline{\textbf{Step 2}}\par

First, we consider a generalization of  Proposition  \ref{step2prop}  to arbitrary  $p\geq 0$.   When $p>0$,  for a  finitely generated subgroups $\Gamma$ of  $k^{\times}$, we write $\Gamma^{\text{perf}}\subset \overline{k}^{\times}$ for the set of all elements whose  $p^{n}$-th power is in $\Gamma$ for some $n\in\mathbb{Z}_{\geq0}$. In other words, $\Gamma^{\text{perf}}:=\underset{n\in\mathbb{Z}_{\geq0}}\cup \Gamma^{\frac{1}{p^{n}}}$. \par

\begin{lmm}[cf. \cite{Na1990-405} (3.1)Lemma, \cite{Ya2020} Proposition 2.1.4.]\label{nakamuralmm2}
 Let $\Gamma_{1}$, $\Gamma_{2}$ be finitely generated subgroups of  $k^{\times}$. If  $k(\Gamma_{1}^{\frac{1}{\ell^{n}}})=k(\Gamma_{2}^{\frac{1}{\ell^{n}}})$ for all $n\in \mathbb{Z}_{\geq0}$ and  for all primes $\ell$ that differ from $p$, then the following hold.
\begin{enumerate}[(1)]
\item If $p=0$, then $\Gamma_{1}=\Gamma_{2}$.
\item If $p> 0$, then $\Gamma_{1}^{\text{\rm{perf}}}= \Gamma_{2}^{\text{\rm{perf}}}$. \par
\end{enumerate}
\end{lmm}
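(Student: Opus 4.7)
The plan is to adapt the strategy of the proof of Proposition \ref{step2prop} to the mixed-characteristic setting, carefully tracking what can be lost by only working with primes $\ell \neq p$. Since both the hypothesis and both conclusions are preserved when $k$ is enlarged to a finite extension (the composite $k'(\Gamma_i^{1/\ell^n}) = k' \cdot k(\Gamma_i^{1/\ell^n})$ preserves the Kummer equality, and $\Gamma_i \subset k^\times$ as well as $\Gamma_i^{\text{perf}} \subset \overline{k}^\times$ depend only on the abstract subgroups), I would first enlarge $k$ so that $\mu_4 \subset k$; this removes the usual obstruction at the prime $\ell = 2$ in Kummer theory. Replacing $\Gamma_2$ by $\Gamma_1 \Gamma_2$ (which leaves the hypothesis intact, since adjoining $\Gamma_1^{1/\ell^n}$ already produces $\Gamma_2^{1/\ell^n}$ and vice versa), I may assume $\Gamma_1 \subset \Gamma_2$; it then suffices to prove $\Gamma_2 \subset \Gamma_1$ (resp.\ $\Gamma_2 \subset \Gamma_1^{\text{perf}}$), since the reverse inclusion is trivial.

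Fix $\gamma \in \Gamma_2$. For every prime $\ell \neq p$ and every $n \geq 0$, the inclusion $k(\mu_{\ell^n}, \gamma^{1/\ell^n}) \subset k(\Gamma_2^{1/\ell^n}) = k(\Gamma_1^{1/\ell^n})$ together with Kummer theory over $k(\mu_{\ell^n})$ yields $\gamma \in \Gamma_1 \cdot k(\mu_{\ell^n})^{\times \ell^n}$. The central technical input is then \cite{Ya2020} Lemma 2.1.3 (which is where the assumption $\mu_4 \subset k$ is spent), which descends this to $\gamma \in \Gamma_1 \cdot k^{\times \ell^n}$, valid for every prime $\ell \neq p$ and every $n \geq 0$.

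To feed this into a finiteness argument, I would let $R$ be the integral closure of $\mathbb{Z}[\Gamma_2]$ in $k$ (interpreting $\mathbb{Z}[\Gamma_2]$ as $\mathbb{F}_p[\Gamma_2]$ when $p > 0$). Because $k$ is an \textbf{FGF}, the algebraic closure of $\mathrm{Frac}(\mathbb{Z}[\Gamma_2])$ inside $k$ is a finite extension of $\mathrm{Frac}(\mathbb{Z}[\Gamma_2])$, so $R$ is a finitely generated $\mathbb{Z}$-algebra, and consequently $R^\times$ is a finitely generated abelian group by the classical theorem on units of such algebras. Moreover, any $x \in k^\times$ with $x^{\ell^n} \in R^\times$ is integral over $R$ and hence lies in $R^\times$ by integral closedness, which upgrades $\gamma \in \Gamma_1 \cdot k^{\times \ell^n}$ to $\gamma \in \Gamma_1 \cdot R^{\times \ell^n}$. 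Thus the class $\gamma \Gamma_1 \in R^\times / \Gamma_1$ is divisible by $\ell^n$ for every prime $\ell \neq p$ and every $n \geq 0$.

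To finish, observe that in a finitely generated abelian group the elements divisible by $\ell^n$ for all primes $\ell \neq p$ and all $n$ are precisely the elements whose order is a power of $p$; this set is trivial when $p = 0$. Applied to $A = R^\times / \Gamma_1$, one obtains $\gamma \in \Gamma_1$ when $p = 0$, giving $\Gamma_1 = \Gamma_2$; and $\gamma^{p^a} \in \Gamma_1$ for some $a \geq 0$ when $p > 0$, which means $\gamma \in \Gamma_1^{\text{perf}}$, whence $\Gamma_2 \subset \Gamma_1^{\text{perf}}$ and therefore $\Gamma_2^{\text{perf}} \subset \Gamma_1^{\text{perf}}$. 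The principal obstacle in this plan is the Kummer-descent step (\cite{Ya2020} Lemma 2.1.3), which is the only place where the arithmetic of the prime $\ell = 2$ and the assumption $\mu_4 \subset k$ enter; the rest is bookkeeping built on top of finiteness theorems for units of finitely generated rings.
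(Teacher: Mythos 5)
Your proposal is correct and follows essentially the same route as the paper's sketch, which itself reduces to the argument of Proposition~\ref{step2prop}: enlarge $k$ to contain $\mu_4$, reduce to $\Gamma_1\subset\Gamma_2$, apply Kummer theory and \cite{Ya2020} Lemma 2.1.3 to land in $\Gamma_1\cdot k^{\times\ell^n}$, then pass to $R^{\times}$ via integral closedness and conclude by finite generation of $R^{\times}$. You merely make explicit the final divisibility argument in the finitely generated group $R^{\times}/\Gamma_1$ (that an element divisible by $\ell^n$ for all $\ell\neq p$ and all $n$ has $p$-power order, hence is trivial when $p=0$), which the paper leaves implicit.
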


\begin{proof}[Sketch of Proof]
(1) is the  same as Proposition  \ref{step2prop}. We  consider (2). Assume that $p>0$.  As in the proof of  Proposition \ref{step2prop}, we define $R$ as   the integral closure of  $\mathbb{F}_{p}[\Gamma_{2}]$  in $k(\mu_{4})$.  We have that $R^{\times}$ is a finitely generated  $\mathbb{Z}$-module because  $R$  is  finitely generated over $\mathbb{F}_{p}$ by assumption. In the notation of Proposition \ref{step2prop},  we have $\gamma\in\Gamma_{1}\cdot R^{\times \ell^n}$  for all $n\in \mathbb{Z}_{\geq0}$ and  for all primes $\ell$ that differ from $p$. Thus, we obtain $\gamma\in\Gamma_{1}^{\text{perf}}$.
 \end{proof}

\begin{lmm}[cf. \cite{Ya2020} Lemma 2.2.1, \cite{Ya2020} Lemma 2.3.4]\label{2.2.1}
Let  $\lambda_{1},\lambda_{2}\in k^{\times}-\left\{1\right\}$, and let $\rho\in \overline{k}$ be a primitive $6$-th root of unity. 
\begin{enumerate}[(1)]
\item  If  $p=0$, $\langle\lambda_{1}\rangle =\langle \lambda_{2}\rangle$ and  $\langle 1-\lambda_{1}\rangle=\langle 1-\lambda_{2}\rangle$ in $k^\times$, then $\lambda_{1}=\lambda_{2}$ or $\left\{\lambda_{1},\lambda_{2}\right\}=\left\{\rho,\rho^{-1}\right\}$.
\item If  $p>0$, $\lambda_{1}\not\in k\cap \overline{\mathbb{F}}_p$, $\langle\lambda_{1}\rangle^{\text{\rm{perf}}}=\langle \lambda_{2}\rangle^{\text{\rm{perf}}} $ and $\langle1-\lambda_{1}\rangle^{\text{\rm{perf}}}=\langle 1-\lambda_{2}\rangle^{\text{\rm{perf}}}$ in $k^{\times}$, then there exists unique $n\in\mathbb{Z}$ such that $\lambda_{2}=\lambda_{1}^{p^n}$. 
\end{enumerate}
\end{lmm}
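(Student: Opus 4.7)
The overall plan is to deduce explicit multiplicative relations between the pairs $(\lambda_1, \lambda_2)$ and $(1-\lambda_1, 1-\lambda_2)$ from the subgroup hypotheses, and then eliminate the residual algebraic equations: using Galois conjugates together with Kronecker's theorem in case (1), and using the characteristic-$p$ Frobenius identity $(1-\lambda)^{p^c} = 1 - \lambda^{p^c}$ in case (2). First I extract structural consequences. In case (1), $\langle \lambda_1 \rangle = \langle \lambda_2 \rangle$ yields $\lambda_2 = \lambda_1^a$ with $a \in \{\pm 1\}$ when $\lambda_1$ is non-torsion and $\gcd(a, N) = 1$ when $\lambda_1$ has order $N$, and an analogous statement for $1-\lambda_i$. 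In case (2), $\lambda_1 \notin k \cap \overline{\mathbb{F}}_p$ combined with the fact that every root of unity in a field finitely generated over $\mathbb{F}_p$ lies in $\overline{\mathbb{F}}_p$ forces $\lambda_1$ to be transcendental over $\mathbb{F}_p$; then $\langle \lambda_1 \rangle^{\text{perf}} = \langle \lambda_2 \rangle^{\text{perf}} \simeq \mathbb{Z}[1/p]$ forces $\lambda_2 = \lambda_1^{\varepsilon p^c}$ for some $\varepsilon \in \{\pm 1\}$ and $c \in \mathbb{Z}$, with an analogous relation for $1-\lambda_i$.

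For part (1), assume $\lambda_2 \neq \lambda_1$. If $\lambda_1$ is non-torsion, then $\lambda_2 = \lambda_1^{-1}$ and $1-\lambda_2 = -\lambda_1^{-1}(1-\lambda_1)$; combined with the second hypothesis (and a subdivision according to whether $1-\lambda_1$ is torsion) this leads to $\lambda_1 = -1$, to $\lambda_1^2 - \lambda_1 + 1 = 0$, or to $\lambda_1 \in -\langle 1-\lambda_1 \rangle$ (a finite group), each contradicting non-torsionality. If $\lambda_1 = \zeta_N$ has finite order $N$, then for $N \neq 6$ Kronecker's theorem implies that $1-\zeta_N$ is non-torsion, so the second hypothesis forces $(1-\zeta_N)(1-\zeta_N^a) = 1$, equivalently $\zeta_N - \zeta_N^{1-a} = 1$. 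Applying every Galois automorphism $\zeta_N \mapsto \zeta_N^b$ with $\gcd(b, N) = 1$ and reading the absolute value in the standard complex embedding gives $2|\sin(\pi ab/N)| = 1$ for every such $b$; since $b \mapsto ab$ permutes $(\mathbb{Z}/N\mathbb{Z})^\times$, every $c$ coprime to $N$ must satisfy $c/N \equiv \pm 1/6 \pmod 1$, forcing $\phi(N) \leq 2$ and $6 \mid N$, i.e.\ $N = 6$, contradicting $N \neq 6$. The remaining case $N = 6$ is direct: $\{\lambda_1, \lambda_2\} = \{\rho, \rho^{-1}\}$.

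For part (2), when $\varepsilon = +1$ the identity $(1-\lambda)^{p^c} = 1-\lambda^{p^c}$ (extended to all $c \in \mathbb{Z}$ inside the perfect closure) makes the second hypothesis automatic and gives exactly the desired conclusion with $n = c$. When $\varepsilon = -1$, combining $\lambda_2 = \lambda_1^{-p^c}$ with the second hypothesis produces $(1-\lambda_1^{-p^c})(1-\lambda_1^{p^d}) = 1$ for some $d \in \mathbb{Z}$, which simplifies to $\lambda_1^{p^c + p^d} - \lambda_1^{p^d} + 1 = 0$. Raising this relation to a sufficiently large $p^M$-th power clears any negative exponents and produces a non-trivial polynomial identity over $\mathbb{F}_p$ satisfied by $\lambda_1$, contradicting its transcendence over $\mathbb{F}_p$. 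Uniqueness of $n$ is immediate since $\lambda_1$ is non-torsion.

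The hardest step is the torsion case of part (1): ruling out the infinite family of potential pairs $(N, a)$ with $(1-\zeta_N)(1-\zeta_N^a) = 1$ hinges on imposing the absolute-value identity simultaneously at every Galois conjugate of $\zeta_N$ and extracting the sharp constraint $\phi(N) \leq 2$. The corresponding step in (2) is substantially easier because the Frobenius itself supplies the expected relation between $\lambda_2$ and $1-\lambda_2$ when $\varepsilon = +1$, reducing the remaining work to a single-line elimination of the sign $\varepsilon = -1$.
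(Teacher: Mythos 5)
Your proof is correct. Part (2) is essentially the paper's argument: identify $\lambda_2=\lambda_1^{\pm p^c}$ and $1-\lambda_2=(1-\lambda_1^{p^d})^{\pm1}$ from the perfections, use the Frobenius identity $(1-\lambda_1)^{p^d}=1-\lambda_1^{p^d}$ to settle the ``$+$'' sign case, and in the remaining sign case derive the relation $\lambda_1^{p^c+p^d}-\lambda_1^{p^d}+1=0$, which after raising to a suitable $p$-power exhibits $\lambda_1$ as a root of a nonzero polynomial over $\mathbb{F}_p$, contradicting $\lambda_1\notin k\cap\overline{\mathbb{F}}_p$. In part (1), however, you take a genuinely different route from the paper. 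The paper fixes an embedding $k\hookrightarrow\mathbb{C}$ and runs the entire argument through the archimedean absolute value: whenever one of $|\lambda_1|$, $|1-\lambda_1|$ differs from $1$, the corresponding element is non-torsion and the two ``exponent $\pm1$'' constraints collapse by a short computation to $\lambda_1=\lambda_2$; otherwise $|\lambda_1|=|1-\lambda_1|=1$ places $\lambda_1$ on the intersection of the unit circles about $0$ and about $1$, which is precisely $\{e^{\pm i\pi/3}\}=\{\rho,\rho^{-1}\}$. So in the paper the primitive sixth roots of unity drop out of a one-line Euclidean observation, with no torsion/non-torsion dichotomy ever being made. You instead split on whether $\lambda_1$ itself is torsion, dispatch the non-torsion branch by a multiplicative-relations elimination, and handle the torsion branch $\lambda_1=\zeta_N$ by a Kronecker-type argument: from $(1-\zeta_N)(1-\zeta_N^a)=1$ you pass to the conjugate relation $\zeta_N^b-\zeta_N^{b(1-a)}=1$ and read off $2|\sin(\pi ab/N)|=1$ for every $b\in(\mathbb{Z}/N\mathbb{Z})^\times$, pinning down $N=6$. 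Both proofs ultimately invoke an archimedean place, but yours confines that use to the cyclotomic subcase and is otherwise algebraic, at the cost of a longer case analysis; the paper's is shorter and more geometric. One small simplification for your torsion step: once $2|\sin(\pi c/N)|=1$ holds for all $c$ coprime to $N$, the single instance $c=1$ already gives $\sin(\pi/N)=1/2$ and hence $N=6$, so the detour through $\phi(N)\leq2$ and $6\mid N$ is unnecessary.
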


\begin{proof}[Sketch of Proof]
(1)  We fix an embedding  $k\hookrightarrow \mathbb{C}$ and regard  $k$ as a subfield of $\mathbb{C}$.  In particular, we may assume  that $\rho=\mathrm{\text{e}}^{\frac{\pi}{3}\sqrt{-1}}$.  If $|\lambda_{1}|\neq1$, then $\lambda_{1}$ is a non-torsion element and $\mathbb{Z}\cong\langle\lambda_{1}\rangle=\langle\lambda_{2}\rangle$, hence $\lambda_{1}\in \{\lambda_{2}, \lambda_{2}^{-1}\}$. Similarly,  if  $|1-\lambda_{1}|\neq1$, then $1-\lambda_{1}\in \{1-\lambda_{2}, (1-\lambda_{2})^{-1}\}$. Then we obtain  either  $\lambda_{1}=\lambda_{2}$ or  $|\lambda_{1}|=|\lambda_{2}|=|1-\lambda_{1}|=|1-\lambda_{2}|=1$ by  an  easy calculation.  The second case coincides with   $\left\{\lambda_{1},\lambda_{2}\right\}=\left\{\mathrm{e}^{\frac{\pi}{3}\sqrt{-1}},\mathrm{e}^{-\frac{\pi}{3}\sqrt{-1}}\right\}$.\par
(2) By assumption, $\lambda_{1}$, $1-\lambda_{1}$, $\lambda_{2}$ and $1-\lambda_{2}$ are   non-torsion elements of $k^{\times}$.  Since $\langle\lambda_{1}\rangle^{\text{\rm{perf}}}=\langle \lambda_{2}\rangle^{\text{\rm{perf}}} $ and $\langle1-\lambda_{1}\rangle^{\text{\rm{perf}}}=\langle 1-\lambda_{2}\rangle^{\text{\rm{perf}}}$, there exist a  unique pair  $u,v\in\mathbb{Z}$ such that  $\lambda_{1}^{p^u}\in \{\lambda_{2}, \lambda^{-1}_{2}\}$ and  $1-\lambda_{1}^{p^v}\in\{1-\lambda_{2}, (1-\lambda_{2})^{-1}\}$. We may assume   that $\lambda_{1}^{p^u}=\lambda_{2}^{-1}$ and $1-\lambda_{1}^{p^v}=(1-\lambda_{2})^{-1}$.  Hence $\lambda_{1}$ satisfies $\lambda_{1}^{p^{u+v}}-\lambda_{1}^{p^{v}}+1=0$.  Take  $W\in\mathbb{N}$ that  satisfies  $u+v+W\geq 0$ and $v+W\geq 0$. Then  $\lambda_{1}$ is a root of the polynomial $t^{p^{u+v+W}}-t^{p^{v+W}}+1\in \mathbb{F}_p[t]$. Hence we get $\lambda_{1}\in k\cap \overline{\mathbb{F}}_{p}$. This is absurd.\par

 \end{proof}

\noindent\underline{\textbf{Step 3}}\par
First, we consider the case of $p=0$. In this case, we obtain the $m$-step solvable Grothendieck conjecture for  hyperbolic punctured projective lines by Step $1$ and Step $2$.

\begin{prp}[cf. \cite{Na1990-405} (4.4)Theorem,  \cite{Ya2020} Proposition 2.2.4] \label{punctured0}
 Let $\Lambda_{1}$, $\Lambda_{2}$ be finite sets of $k$-rational points of $\mathbb{P}^1_{k}$ with $|\Lambda_{1}|\geq 3$.  Assume that $p=0$ and $m\geq 3$. Let  $\Phi_{m}: \Pi^{(m)}_{\mathbb{P}^1_k-\Lambda_{1}}\xrightarrow[G_k]{\sim} \Pi^{(m)}_{\mathbb{P}^1_k-\Lambda_{2}}$ be a $G_{k}$-isomorphism, and $\Phi_{1}: \Pi^{(1)}_{\mathbb{P}^1_k-\Lambda_{1}}\xrightarrow[G_k]{\sim} \Pi^{(1)}_{\mathbb{P}^1_k-\Lambda_{2}}$ the isomorphism induced by $\Phi_{m}$. Then  there exists $f\in \text{\rm{Aut}}_{k}(\mathbb{P}^1_k)$ such that
\begin{enumerate}[(a)]
\item $f(\Lambda_{1})=\Lambda_{2}$, and 
\item for each pair  $x_{1}\in \Lambda_{1}$ and  $x_{2}\in \Lambda_{2}$,  $f(x_{1})=x_{2} $ if and only if $\Phi_{1}(I_{x_{1},\overline{\Pi}^{1,p'}_{\mathbb{P}^1_k-\Lambda_{1}}})=I_{x_{2},\overline{\Pi}^{1,p'}_{\mathbb{P}^1_k-\Lambda_{2}}}$.
\end{enumerate}
\end{prp}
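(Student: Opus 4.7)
The plan is to first extract from $\Phi_{m}$ a $G_{k}$-equivariant bijection of cusps $\phi\colon\Lambda_{1}\xrightarrow{\sim}\Lambda_{2}$, and then to realize $\phi$ as the restriction of a single M\"obius transformation, by pinning down each cusp via the weak rigidity invariants of Step~1 and the Kummer-theoretic comparison of Step~2. Since $m\geq 3$, Proposition~\ref{1.4.5} (applied with ``$m=1$, $r=|\Lambda_{1}|\geq 3$'') shows that the isomorphism $\Phi_{1}$ induced from $\Phi_{3}$ preserves decomposition groups at cusps; this produces a $G_{k}$-equivariant bijection $\phi\colon\Lambda_{1}\xrightarrow{\sim}\Lambda_{2}$ (in particular $|\Lambda_{2}|=|\Lambda_{1}|$), and any $f$ satisfying (a)(b) must extend $\phi$.

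Assuming $|\Lambda_{1}|\geq 4$ (the case $|\Lambda_{1}|=3$ is immediate), fix three cusps $x_{1},x_{2},x_{3}\in\Lambda_{1}$, set $y_{i}:=\phi(x_{i})\in\Lambda_{2}$, and let $f\in\text{Aut}_{k}(\mathbb{P}^{1}_{k})$ be the unique M\"obius transformation with $f(x_{i})=y_{i}$. Precomposing $\Phi_{m}$ with the isomorphism induced by $f$ and choosing coordinates, we may reduce to $x_{i}=y_{i}\in\{0,\infty,1\}$; it then suffices to show $x_{4}=y_{4}:=\phi(x_{4})$ for every remaining $x_{4}\in\Lambda_{1}$. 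Since $\Phi_{1}$ preserves decomposition groups at cusps, transporting the pair $(\{x_{1},x_{2}\},\{x_{3},x_{4}\})$ through $\phi$ to $(\{y_{1},y_{2}\},\{y_{3},y_{4}\})$ and invoking Proposition~\ref{2.1.3} gives
\[
k(\mu_{\ell^{n}},x_{4}^{1/\ell^{n}})\ =\ k(\mu_{\ell^{n}},y_{4}^{1/\ell^{n}})
\]
for every prime $\ell$ and every $n\geq 0$. Repeating with the pairing $(\{x_{1},x_{4}\},\{x_{2},x_{3}\})$, whose cross-ratio is $1/(1-x_{4})$ and hence generates the same cyclic subgroup of $k^{\times}$ as $1-x_{4}$, analogously yields $k(\mu_{\ell^{n}},(1-x_{4})^{1/\ell^{n}})=k(\mu_{\ell^{n}},(1-y_{4})^{1/\ell^{n}})$.

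Lemma~\ref{nakamuralmm2}(1) (applicable since $p=0$) then translates these equalities of Kummer extensions into $\langle x_{4}\rangle=\langle y_{4}\rangle$ and $\langle 1-x_{4}\rangle=\langle 1-y_{4}\rangle$ inside $k^{\times}$, and Lemma~\ref{2.2.1}(1) forces either $x_{4}=y_{4}$, which is exactly what is needed, or the exceptional pattern $\{x_{4},y_{4}\}=\{\rho,\rho^{-1}\}$ with $\rho$ a primitive $6$-th root of unity in $k$. The main obstacle of the plan is disposing of this exception: when it occurs, the normalized $f=\text{id}$ fails to extend $\phi$ at $x_{4}$, so one must instead show that the bijection $\phi$ is nevertheless realized by some other element of $\text{Aut}_{k}(\mathbb{P}^{1}_{k})$. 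This I would handle by exploiting additional weak rigidity invariants coming from the remaining $4$-tuple pairings $(\varepsilon,\delta)$ (so as to recover the full anharmonic orbit $J(x_{4})$ rather than only $\langle x_{4}\rangle$ and $\langle 1-x_{4}\rangle$), combined with the observation that the equianharmonic configuration $\{0,\infty,1,\rho\}$ carries an enhanced cyclic symmetry inside $\text{Aut}_{k}(\mathbb{P}^{1}_{k})$ which allows the normalizing triple to be permuted so that $\phi$ is matched globally. Beyond this one subtle check, the argument is a formal combination of the group-theoretic reconstruction of Step~1 and the Kummer-theoretic rigidity of Step~2.
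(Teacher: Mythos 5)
Your proposal tracks the paper's argument closely on the main path: reduce to $|\Lambda_1|\geq 4$, use Proposition~\ref{1.4.5} to get the cusp bijection $\phi$, use Proposition~\ref{2.1.3} on the pairings $(\varepsilon,\delta)$ and $(\varepsilon',\delta')$ together with Lemma~\ref{nakamuralmm2}(1) to get $\langle\lambda(\varepsilon,\delta)\rangle=\langle\lambda(\alpha^*\varepsilon,\alpha^*\delta)\rangle$ and $\langle 1-\lambda(\varepsilon,\delta)\rangle=\langle 1-\lambda(\alpha^*\varepsilon,\alpha^*\delta)\rangle$, and then apply Lemma~\ref{2.2.1}(1). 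This is all correct and is exactly what the paper does.

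However, your treatment of the exceptional case $\{\lambda(\varepsilon,\delta),\lambda(\alpha^*\varepsilon,\alpha^*\delta)\}=\{\rho,\rho^{-1}\}$ has a genuine gap, and the two ideas you propose to fix it both fail. First, extracting more weak rigidity invariants from the remaining $4$-tuple pairings so as to ``recover the full anharmonic orbit $J(x_4)$'' gains nothing: the equianharmonic value $\rho$ satisfies $\rho^2-\rho+1=0$, so $J(\rho)=\{\rho,\rho^{-1}\}$ is precisely the ambiguous pair, and every cross-ratio you can form from $\{0,1,\infty,\rho\}$ lands back in $\{\rho,\rho^{-1}\}$ (up to the already-known $\langle\cdot\rangle$, $\langle 1-\cdot\rangle$ data). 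Second, the ``enhanced cyclic symmetry'' cannot be used to repick $f$: condition~(b) of the Proposition forces $f|_{\Lambda_1}$ to equal the bijection $\phi$ determined by $\Phi_1$ on inertia groups, and the Möbius transformation taking $\{0,1,\infty,\rho\}$ to $\{0,1,\infty,\rho^{-1}\}$ (namely $z\mapsto 1-z$) swaps $0$ and $1$ and hence does not extend $\phi$; the 3-transitivity normalization has already used up all the freedom in $PGL_2(k)$. The paper's actual resolution is different in kind: one must \emph{rule out} the exceptional case by a further group-theoretic input, namely that there is no $G_k$-isomorphism $\Pi^{(2,\text{pro-}2)}_{\mathbb{P}^1_k-\{0,1,\infty,\rho\}}\xrightarrow{\sim}\Pi^{(2,\text{pro-}2)}_{\mathbb{P}^1_k-\{0,1,\infty,\rho^{-1}\}}$ carrying $I_0,I_\infty,I_1$ to $I_0,I_\infty,I_1$ respectively (\cite{Ya2020}~Lemma~2.2.3(2)), combined with the pro-$\ell$ refinement \cite{Ya2020}~Corollary~1.4.8(2) of Proposition~\ref{1.4.5} to pass from your pro-$p'$ isomorphism to a pro-$2$ one preserving the named inertia groups. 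This pro-$2$ computation is an essential extra ingredient that your proposal does not supply.
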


\begin{proof}[Sketch of Proof]
We may assume that $|\Lambda_{1}|\geq 4$. Let $x_1,x_2,x_3,x_4$ be distinct elements of $\Lambda_{1}$ and  $\varepsilon=\left\{x_1,x_2\right\},\delta=\left\{x_3,x_4\right\}$. Since $m\geq 3$, $\Phi_{1}$ preserves decomposition groups at cusps (Proposition \ref{1.4.5}).  Let $\alpha^{*}: \Lambda_{1}\xrightarrow{\sim}\Lambda_{2}$ be the unique bijection satisfying $\alpha^{*}(x_{1})=x_{2}\Leftrightarrow  \Phi_{1}(I_{x_{1},\overline{\Pi}^{1,p'}_{\mathbb{P}^1_k-\Lambda_{1}}})=I_{x_{2},\overline{\Pi}^{1,p'}_{\mathbb{P}^1_k-\Lambda_{2}}}$ ($x_{1}\in \Lambda_{1}$, $x_{2}\in \Lambda_{2}$).  By Proposition \ref{2.1.3} and  Lemma \ref{nakamuralmm2},  we get $ \langle \lambda(\varepsilon,\delta) \rangle=\langle \lambda(\alpha^*\varepsilon,\alpha^{*}\delta) \rangle$.  Set $\varepsilon'=\left\{x_3,x_2\right\},\delta'=\left\{x_1,x_4\right\}$. Then $\lambda(\varepsilon',\delta')=1-\lambda(\varepsilon,\delta)$. Hence we get $\langle1-\lambda(\varepsilon,\delta)\rangle=\langle1-\lambda(\alpha^*\varepsilon,\alpha^{*}\delta)\rangle$. Thus, by  Lemma \ref{2.2.1}(1), we obtain either  $\lambda(\varepsilon,\delta)=\lambda(\alpha^*\varepsilon,\alpha^{*}\delta)$ or $\left\{\lambda(\varepsilon,\delta),\ \lambda(\alpha^*\varepsilon,\alpha^{*}\delta)\right\}=\left\{\rho,\ \rho^{-1}\right\}$. There is no  isomorphism $\Pi^{(2, \text{pro-} 2)}_{\mathbb{P}^1_k-\left\{0,1,\infty,\rho\right\}}\xrightarrow[G_{k}]{\sim}\Pi^{(2,\text{pro-} 2)}_{\mathbb{P}^1_k-\left\{0,1,\infty,\rho^{-1}\right\}}$ which maps $I_{0,\Pi^{(2, \text{pro-} 2)}_{\mathbb{P}^1_k-\left\{0,1,\infty,\rho\right\}}}$, $I_{\infty,\Pi^{(2, \text{pro-} 2)}_{\mathbb{P}^1_k-\left\{0,1,\infty,\rho\right\}}}$, $I_{1,\Pi^{(2, \text{pro-} 2)}_{\mathbb{P}^1_k-\left\{0,1,\infty,\rho\right\}}}$ to $I_{0,\Pi^{(2, \text{pro-} 2)}_{\mathbb{P}^1_k-\left\{0,1,\infty,\rho^{-1}\right\}}}$, $I_{\infty,\Pi^{(2, \text{pro-} 2)}_{\mathbb{P}^1_k-\left\{0,1,\infty,\rho^{-1}\right\}}}$, $I_{1,\Pi^{(2, \text{pro-} 2)}_{\mathbb{P}^1_k-\left\{0,1,\infty,\rho^{-1}\right\}}}$, respectively (see \cite{Ya2020} Lemma 2.2.3(2)). This together with a certain improvement (\cite{Ya2020} Corollary 1.4.8(2)) of Proposition \ref{1.4.5} in the pro-$\ell$ setting,   we get $\lambda(\varepsilon,\delta)=\lambda(\alpha^*\varepsilon,\alpha^{*}\delta)$ for an arbitrary pair   $\varepsilon,\delta$. Therefore, the assertion follows.
 \end{proof}

Next, we consider the case of $p>0$. This case has  difficulties arising from the existence of  Frobenius twists. 

\begin{prp}[cf. \cite{Ya2020} Proposition 2.3.7]\label{puncturedp} Let $\Lambda_{1}$, $\Lambda_{2}$ be finite sets of $k$-rational points of $\mathbb{P}^1_{k}$ with $|\Lambda_{1}|\geq 3$.  Assume that $p>0$ and  $m\geq 3$. Let  $\Phi_{m}: \Pi^{(m,p')}_{\mathbb{P}^1_k-\Lambda_{1}}\xrightarrow[G_k]{\sim} \Pi^{(m,p')}_{\mathbb{P}^1_k-\Lambda_{2}}$ be a $G_{k}$-isomorphism, and  $\Phi_{1}^{(v_{1},v_{2})}: \Pi^{(1,p')}_{\mathbb{P}^1_k-\Lambda_{1}(v_{1})}\xrightarrow[G_k]{\sim} \Pi^{(1,p')}_{\mathbb{P}^1_k-\Lambda_{2}(v_{2})}$ the  isomorphism induced by $\Phi_{m}$ for each pair  $v_{1},v_{2}\in\mathbb{Z}_{\geq 0}$.  We assume that  the following condition: $(\dag)$ For each $S'\subset \Lambda_{1}\ \text{with}\ |S'|=4$,  the curve $ \mathbb{P}^{1}_{\overline{k}}-S'$ does not  descend to a curve over $\overline{\mathbb{F}}_{p}$.  Then there exist $w_{1},w_{2}\in \mathbb{Z}_{\geq 0}$ and  $f:\mathbb{P}^{1}_{k}\ (=\mathbb{P}^1_{k}(w_{1}))\xrightarrow[k]{\sim}\mathbb{P}^{1}_{k}\ (=\mathbb{P}^1_{k}(w_{2}))$ such that. 

\begin{enumerate}[(a)]
\item $f(\Lambda_{1}(w_{1}))=\Lambda_{2}(w_{2})$, and 
\item for each pair  $x_{1}\in \Lambda_{1}(w_{1})$ and  $x_{2}\in \Lambda_{2}(w_{2})$,  $f(x_{1})=x_{2} $ if and only if \\$\Phi_{1}^{(w_{1},w_{2})}(I_{x_{1},\overline{\Pi}^{1,p'}_{\mathbb{P}^1_k-\Lambda_{1}(w_{1})}})=I_{x_{2},\overline{\Pi}^{1,p'}_{\mathbb{P}^1_k-\Lambda_{2}(w_{2})}}$.
\end{enumerate}
\end{prp}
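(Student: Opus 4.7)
The strategy is to adapt the proof of Proposition \ref{punctured0} to positive characteristic, replacing Lemma \ref{2.2.1}(1) by Lemma \ref{2.2.1}(2) and then handling the extra ambiguity coming from Frobenius twists. Since $m \geq 3$, Proposition \ref{1.4.5} applies to the induced isomorphism $\Phi_1^{(0,0)}: \Pi^{(1,p')}_{\mathbb{P}^1_k - \Lambda_1} \xrightarrow[G_k]{\sim} \Pi^{(1,p')}_{\mathbb{P}^1_k - \Lambda_2}$ and produces a canonical bijection $\alpha^*: \Lambda_1 \to \Lambda_2$ characterized by the property that $\Phi_1^{(0,0)}$ maps the inertia group at $x \in \Lambda_1$ to the inertia group at $\alpha^*(x) \in \Lambda_2$.

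For each ordered partition $\varepsilon = \{x_1, x_2\}$, $\delta = \{x_3, x_4\}$ of four distinct elements of $\Lambda_1$, the weak rigidity invariant $\kappa_{\ell^n}(\varepsilon, \delta)$ of Definition \ref{2.1.1} is defined purely group-theoretically from $\Pi^{(1,p')}_{\mathbb{P}^1_k - \Lambda_1}$ together with the decomposition groups at the cusps in $\delta$. Thus $\Phi_m$ and $\alpha^*$ yield $\kappa_{\ell^n}(\varepsilon,\delta) = \kappa_{\ell^n}(\alpha^*\varepsilon, \alpha^*\delta)$, which by Proposition \ref{2.1.3} reads
\[
k(\mu_{\ell^n}, \lambda(\varepsilon,\delta)^{1/\ell^n}) = k(\mu_{\ell^n}, \lambda(\alpha^*\varepsilon, \alpha^*\delta)^{1/\ell^n})
\]
for every $n \geq 0$ and every prime $\ell \neq p$. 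Applying Lemma \ref{nakamuralmm2}(2), together with the identity $\lambda(\{x_3,x_2\}, \{x_1, x_4\}) = 1 - \lambda(\varepsilon, \delta)$ used in Proposition \ref{punctured0}, one obtains
\[
\langle \lambda(\varepsilon,\delta)\rangle^{\mathrm{perf}} = \langle \lambda(\alpha^*\varepsilon, \alpha^*\delta)\rangle^{\mathrm{perf}} \quad \text{and} \quad \langle 1 - \lambda(\varepsilon,\delta)\rangle^{\mathrm{perf}} = \langle 1 - \lambda(\alpha^*\varepsilon, \alpha^*\delta)\rangle^{\mathrm{perf}}.
\]

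The hypothesis $(\dag)$ enters at the next step: $\mathbb{P}^1_{\overline{k}} - \{0, 1, \infty, \lambda(\varepsilon,\delta)\}$ fails to descend to $\overline{\mathbb{F}}_p$ if and only if $\lambda(\varepsilon, \delta) \notin k \cap \overline{\mathbb{F}}_p$, which is precisely the hypothesis of Lemma \ref{2.2.1}(2). For each $(\varepsilon, \delta)$ this produces a unique integer $n(\varepsilon, \delta) \in \mathbb{Z}$ such that
\[
\lambda(\alpha^*\varepsilon, \alpha^*\delta) = \lambda(\varepsilon, \delta)^{p^{n(\varepsilon, \delta)}}.
\]

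The main obstacle, with no analogue in characteristic zero, is globalizing these pointwise exponents into a single pair of Frobenius twists $(w_1, w_2)$ and a single Möbius transformation $f$. I would reduce to the case where $\alpha^*$ fixes a chosen triple by composing with Möbius transformations on source and target sending three chosen cusps (and their $\alpha^*$-images) to $(0, \infty, 1)$; writing $\Lambda_1 - \{0, 1, \infty\} = \{\lambda_4, \dots, \lambda_s\}$ and $\alpha^*(\lambda_i) = \lambda_i^{p^{n_i}}$, the task becomes to prove $n_4 = \cdots = n_s$. Comparing the exponents coming from different $4$-tuples inside the common $5$-tuple $\{0, 1, \infty, \lambda_i, \lambda_j\}$ (for example the one encoding $\lambda_j / \lambda_i$ via the partition $\{0,\infty\},\{\lambda_i,\lambda_j\}$, and the one encoding $(\lambda_j - 1)/(\lambda_i - 1)$ via $\{1,\infty\},\{\lambda_i,\lambda_j\}$) forces, whenever $n_i \neq n_j$, multiplicative relations among $\lambda_i$, $\lambda_j$, $1-\lambda_i$, $1-\lambda_j$ that are incompatible with Lemma \ref{2.2.1}(2) and $(\dag)$, since they would force $\lambda_i$ or $\lambda_j$ into $\overline{\mathbb{F}}_p$. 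Letting $n_0$ be the resulting common value, setting $(w_1, w_2) := (\max(-n_0, 0), \max(n_0, 0))$, and taking $f$ to be the identity (composed with the normalizations already chosen) furnishes $f : \mathbb{P}^1_k(w_1) \xrightarrow[k]{\sim} \mathbb{P}^1_k(w_2)$ with $f(\Lambda_1(w_1)) = \Lambda_2(w_2)$, while compatibility (b) with inertia groups follows automatically from the construction of $\alpha^*$. This Frobenius-bookkeeping step, crucially exploiting $(\dag)$, is where the bulk of the genuine work lies.
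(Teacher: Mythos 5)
Your proposal correctly mirrors the paper's strategy through the first several steps: $\Phi_1^{(v_1,v_2)}$ preserves decomposition groups at cusps by Proposition~\ref{1.4.5}, the weak rigidity invariant and Proposition~\ref{2.1.3} give the equalities of Kummer towers, Lemma~\ref{nakamuralmm2}(2) gives equality of the $\langle\cdot\rangle^{\mathrm{perf}}$ groups, and Lemma~\ref{2.2.1}(2) together with $(\dag)$ produces a unique integer $n(\varepsilon,\delta)$ for each cross-ratio. This is exactly the case $|\Lambda_1|=4$ of the paper's proof.

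The gap is in your ``Frobenius-bookkeeping'' step, which you yourself flag as the crux. You assert that after normalizing $\alpha^*$ to fix $\{0,1,\infty\}$, any inequality $n_i\neq n_j$ ``forces multiplicative relations among $\lambda_i,\lambda_j,1-\lambda_i,1-\lambda_j$ that are incompatible with Lemma~\ref{2.2.1}(2) and $(\dag)$, since they would force $\lambda_i$ or $\lambda_j$ into $\overline{\mathbb{F}}_p$.'' This is not substantiated, and as stated it is too strong. What the relation coming from $\sigma$ gives is $\lambda_{1,j}^{p^{n_j}-p^\sigma}=\lambda_{1,i}^{p^{n_i}-p^\sigma}$, a multiplicative dependence between $\lambda_{1,i}$ and $\lambda_{1,j}$; this does not by itself put either element in $\overline{\mathbb{F}}_p$ (for instance, $\lambda_{1,j}=\lambda_{1,i}^{p^a}$ is perfectly compatible with $(\dag)$). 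The paper does not argue by contradiction here. Instead it introduces the additional exponents $\sigma,\tau,\zeta$ for the three cross-ratios of $\{0,\infty,\lambda_{1,1},\lambda_{1,2}\}$, $\{1,\infty,\lambda_{1,1},\lambda_{1,2}\}$, $\{0,1,\lambda_{1,1},\lambda_{1,2}\}$, invokes two further lemmas (\cite{Ya2020} Lemma~2.3.5 and Lemma~2.3.6) to obtain the dichotomy ``either $n_1=n_2$ or $\sigma=\tau=\zeta$,'' and then, crucially, closes the second branch by a \emph{gluing lemma}: if conditions (a)(b) hold for two four-element subsets $T,T'\subset\Lambda_1$ with $|T\cap T'|\geq 3$ for a \emph{common} pair $(w_1,w_2)$, they hold for $T\cup T'$. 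This gluing step, applied to $T=\{0,\infty,\lambda_{1,1},\lambda_{1,2}\}$ and $T'=\{1,\infty,\lambda_{1,1},\lambda_{1,2}\}$ when $\sigma=\tau$, is what actually assembles a single $f$ and $(w_1,w_2)$ for the whole $\Lambda_1$. Your proposal has neither the dichotomy nor the gluing mechanism, so the central characteristic-$p$ difficulty is left unresolved.
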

\begin{proof}[Sketch of Proof]
Let  $v_{1},v_{2}\in\mathbb{Z}_{\geq 0}$. Since  $\Phi_{1}^{(v_{1},v_{2})}$ preserves decomposition groups at cusps by Corollary \ref{1.4.5}(1), there exists a  bijection $\alpha_{1}^{*(v_{1},v_{2})}:\Lambda_{1}(v_{1})\rightarrow \Lambda_{2}(v_{2})$ such that $\alpha_{1}^{*(v_{1},v_{2})}(x_{1})=x_{2}\iff\Phi_{1}^{(v_{1},v_{2})}(I_{x_{1},\overline{\Pi}^{1,p'}_{\mathbb{P}^1_k-\Lambda_{1}(v_{1})}})=I_{x_{2},\overline{\Pi}^{1,p'}_{\mathbb{P}^1_k-\Lambda_{2}(v_{2})}}$  ($x_{1}\in \Lambda_{1}(v_{1})$, $x_{2}\in \Lambda_{2}(v_{2})$). For simplicity, we only consider the cases  that $|\Lambda_{1}|=4$ and  $|\Lambda_{1}|=5$. \par
Let  $x_1,x_2,x_3,x_4$ be distinct elements of $\Lambda_{1}$ and set  $\varepsilon=\left\{x_1,x_2\right\},\delta=\left\{x_3,x_4\right\}$.   Using  Proposition \ref{2.1.3} ,  Lemma \ref{nakamuralmm2} and   Lemma \ref{2.2.1}(2), there  exists a unique $n\in\mathbb{Z}$ such that $\lambda(\alpha_{1}^{*(0,0)}\varepsilon,\alpha_{1}^{*(0,0)}\delta)=\lambda(\varepsilon,\delta)^{p^n}$ by the same way as  the proof of Proposition \ref{punctured0}. (Remark: $\lambda(\varepsilon,\delta)$ is not contained in $k\cap\overline{\mathbb{F}}_p$ by the condition $(\dag)$.)  Thus, the assertion follows  when  $|\Lambda_{1}|=4$.\par

We consider the case that  $|\Lambda_{1}|=5$. Assume that $\Lambda_{1}=\left\{0,\infty,1, \lambda_{1,1}, \lambda_{1,2}\right\}$, $\Lambda_{2}=\left\{0,\infty,1,\lambda_{2,1}, \lambda_{2,2}\right\}$ and $\alpha_{1}^{*(0,0)}(0)=0$, $\alpha_{1}^{*(0,0)}(\infty)=\infty$, $\alpha_{1}^{*(0,0)}(1)=1$, $ \alpha_{1}^{*(0,0)}(\lambda_{1,i})=\lambda_{2,i}$ for $i=1,2$. Since $\lambda(\left\{0,\infty\right\},\left\{1,\lambda_{1,i}\right\})=\lambda_{1,i}$,  there exists a unique $n_i\in\mathbb{Z}$ such that $\lambda_{2,i}=\lambda_{1,i}^{p^{n_i}}\ (i=1,2)$. Similarly,  there exists a unique $\sigma,\tau,\zeta\in\mathbb{Z}$ such that $\frac{\lambda_{2,2}}{\lambda_{2,1}}=(\frac{\lambda_{1,2}}{\lambda_{1,1}})^{p^{\sigma}}$,$ \frac{\lambda_{2,2}-1}{\lambda_{2,1}-1}=(\frac{\lambda_{1,2}-1}{\lambda_{1,1}-1})^{p^{\tau}}$, $\frac{\lambda_{2,2}}{\lambda_{2,2}-1}\frac{\lambda_{2,1}-1}{\lambda_{2,1}}=(\frac{\lambda_{1,2}}{\lambda_{1,2}-1}\frac{\lambda_{1,1}-1}{\lambda_{1,1}})^{p^{\zeta}}$. Thus, by \cite{Ya2020} Lemma 2.3.5 and \cite{Ya2020} Lemma 2.3.6, we obtain  either $n_1=n_2$ or $\sigma=\tau=\zeta$.  When  $n_{1}=n_{2}$, the assertion follows.  Let  $T,T'\subset \Lambda_{1}$ with  $|T\cap T'|\geq 3$,  If  there exist  $w_{1},w_{2}\in\mathbb{Z}$ such that the conditions (a)(b) for ($w_{1},w_{2}, \mathbb{P}^{1}_{k}-T$) and  ($w_{1},w_{2}, \mathbb{P}^{1}_{k}-T'$) hold, then  the conditions (a)(b) for ($w_{1},w_{2}, \mathbb{P}^{1}_{k}-T\cup T'$) also  follows.   Thus, the assertion follows when $\sigma=\tau=\zeta$.

\end{proof}

Proposition \ref{punctured0} and Proposition \ref{puncturedp} are  stronger than the (weak) $m$-step solvable Grothendieck conjecture. In \cite{Na1990-411}, Nakamura uses such  functoriality and Galois descent to show the Grothendieck conjecture for genus $0$ hyperbolic curves. In \cite{Ya2020}, the author followed this method to show Theorem \ref{thmnyamaguchi}. 

\begin{proof}[Sketch of Proof of Theorem \ref{thmnyamaguchi}]
Let $\Phi_{m}: \Pi^{(m,p')}_{U_{1}}\xrightarrow[G_{k}]{\sim} \Pi^{(m,p')}_{U_{2}}$, and let $\Phi_{1}: \Pi^{(1,p')}_{U_{1}}\xrightarrow[G_{k}]{\sim}\Pi^{(1,p')}_{U_{2}}$ the isomorphism  induced by $\Phi_{m}$.  We may assume that  $g_{2}=0$ by \cite{Ya2020} Corollary 1.3.5. When $p=0$ (resp. $p>0$), there exists (resp. exist) a finite Galois extension $K$ of  $k$  (resp. and $M\in \mathbb{Z}_{\geq 0}$) such that  $U_{1,K}\cong\mathbb{P}^1_K-\Lambda_{1}$ and $U_{2,K}\cong\mathbb{P}^1_K-\Lambda_{2}$ (resp. $U_{1,K}(M)\cong\mathbb{P}^1_K-\Lambda_{1}$ and $U_{2,K}(M)\cong\mathbb{P}^1_K-\Lambda_{2}$)  for some  $\Lambda_{1},\Lambda_{2}\subset\mathbb{P}^1_K(K)$.  By Proposition \ref{punctured0} (resp.  Proposition \ref{puncturedp}),  there exists (resp. exist)  $f:  \mathbb{P}^{1}_{K}\xrightarrow[K]{\sim} \mathbb{P}^{1}_{K}$ (resp.  and $w_{1},w_{2}\in \mathbb{Z}_{\geq 0}$) such that the conditions (a)(b) in  Proposition \ref{punctured0} (resp. Proposition \ref{puncturedp}) hold. We define  $U_{1}^{f}$, $U_{2}^{f}$,  $E_{1}^{f}$, $E_{2}^{f}$, $\Phi_{1,K}^{f}$    as   $U_{1}$,$U_{2}$, $E_{1}$,$E_{2}$, $\Phi_{1,K}$ (resp.  $U_{1}(M+w_{1}), U_{2}(M+w_{2})$,  $E_{1}(M+w_{1}), E_{2}(M+w_{2})$, $\Pi^{(1,p')}_{U_{1,K}(M+w_{1})}\xrightarrow[G_{K}]{\sim}\Pi^{(1,p')}_{U_{2,K}(M+w_{1})}$), respectively. Hence  there exists $f:  \mathbb{P}^{1}_{K}\xrightarrow[K]{\sim} \mathbb{P}^{1}_{K}$ such that $f(E^{f}_{1,K})=E_{2,K}^{f}$, and $f(x_{1})=x_{2} \iff \Phi_{1,K}^{f}(I_{x_{1},\Pi^{(1,p')}_{U^{f}_{1,K}}})=I_{x_{2},\Pi^{(1,p')}_{U^{f}_{2,K}}}$ by above.\par
Let $\rho(U_{i}^{f})$ be the image of  $\rho\in \text{Gal}(K/k)$ by $\text{Gal}(K/k)\rightarrow \text{Aut}_{U_{i}^{f}}(U^{f}_{i,K})$  ($i=1,2$).  Let $\underline{\rho}( U_{1}^{f})$ be the inverse image of $\rho$ by $p_{U/k}:\Pi^{(1,p')}_{U_{1}^{f}}\twoheadrightarrow G_k$ and $\underline{\rho}( U_{2}^{f})$ the image of $\underline{\rho}( U_{1}^{f})$ by $\Pi^{(1,p')}_{U^{f}_{1}}\xrightarrow{\sim},\Pi^{(1,p')}_{U^{f}_{2,K}}$. We have $\Phi_{1,K}^{f}(I_{\rho( U_{1}^{f})(x_{1}),\overline{\Pi}^{1,p'}_{U^{f}_{1,K}}})=\underline{\rho}( U_{2}^{f})\cdot \Phi_{1,K}^{f}(I_{x_{1},\overline{\Pi}^{1,p'}_{U^{f}_{1,K}}})\cdot \underline{\rho}( U_{2}^{f})^{-1}=I_{\rho( U_{2}^{f})(x_{1}),\overline{\Pi}^{1,p'}_{U^{f}_{2,K}}}$ for all $x_{1}\in E_{1,K}^{f}$. Hence we get $f(\rho( U_{1}^{f})(x_{1}))=\rho( U_{2}^{f})(f(x_{1}))$. As $|E_{1,K}^{f}|\geq3$, it follows that $ f\circ\rho( U_{1}^{f})=\rho( U_{2}^{f})\circ  f$ for all $\rho$. By Galois descent, we obtain $ U_{1}^{f}\underset{k}\cong U_{2}^{f}$.\par

 \end{proof}

\subsection{Work in Progress}
We notice that there are several possible extensions to  Theorem \ref{thmnyamaguchi}. For example,  
can $m\geq 3$ in Theorem \ref{thmnyamaguchi} be replaced with $m\geq 2$?  This question is open for now. However, the author thinks that the answer to  this question would be  affirmative because  we have Theorem \ref{thmnakamura} (see the beginning of  subsection 3.2).    As other examples,  we consider  the following two extensions.
\begin{enumerate}[(i)]
\item  The Grothendieck conjecture is  also proved when $k$ is a finite field. Hence we can expect that the $m$-step solvable Grothendieck conjecture is also true when $k$ is a finite field. Unfortunately, by the condition ($\dag$), Theorem \ref{thmnyamaguchi} does not imply   the result of the case that  $k$ is a finite field.  
\item We can also expect  that the $m$-step solvable Grothendieck conjecture for (general, in other words, genus $\geq0$) hyperbolic curves is true.  To consider this problem, we need an approach different from   the method in section $3$.  
\end{enumerate}
For the present, the author is  trying to show the above two extensions (i)(ii)  by constructing the $m$-step solvable version of the methods in \cite{Ta1997}, \cite{Mo2007}.

\section*{Acknowledgments}
I would like to  acknowledge  Professor Akio Tamagawa (Research Institute for Mathematical Sciences, Kyoto University).  He gave me a great deal of advice on this survey.


\end{document}